\numberwithin{equation}{section}
\newtheorem{cor}[equation]{Corollary}
\newtheorem{lem}[equation]{Lemma}
\newtheorem{prop}[equation]{Proposition}
\newtheorem{convention}[equation]{Convention}
\newtheorem{thm}[equation]{Theorem}
\newtheorem{prob}[equation]{Problem}
\newtheorem{quest}[equation]{Question}
\newtheorem{Example}[equation]{Example}
\newenvironment{ex}{\begin{Example}\rm}{\end{Example}}
\newtheorem{remark}[equation]{Remark}
\newenvironment{rmk}{\begin{remark}\rm}{\end{remark}}
\def\co{\colon\thinspace}
\newcommand{\norm}[1]{\lVert#1\rVert}
\newcommand{\Ric}{\mbox{Ric}}
\newcommand{\e}{\epsilon}
\newcommand{\tinycirc}{{_{^\circ}}}
\def\a{\alpha}
\def\g{\gamma}
\def\z{\zeta}
\def\b{\beta}
\def\d{\partial}
\def\r{\rho}
\def\s{\sigma}
\def\S1{\bf S^1}
\newcommand{\R}{{\mathbb R}}
\begin{document}

\abovedisplayskip=6pt plus3pt minus3pt
\belowdisplayskip=6pt plus3pt minus3pt

\title[Moduli spaces and non-unique souls]
{\bf Moduli spaces of nonnegative sectional curvature
and non-unique souls}

\thanks{\it 2000 Mathematics Subject classification.\rm\ 
Primary 53C20.\it\ Keywords:\rm\ nonnegative curvature, 
soul, moduli space.}\rm

\author{Igor Belegradek\and Slawomir Kwasik \and Reinhard Schultz}

\address{Igor Belegradek\\School of Mathematics\\ Georgia Institute of
Technology\\ Atlanta, GA 30332-0160}\email{ib@math.gatech.edu}
\address{Slawomir Kwasik\\Mathematics Department\\
Tulane University\\6823 St. Charles Ave\\New Orleans, LA 70118}
\email{kwasik@tulane.edu}
\address{Reinhard Schultz\\Department of Mathematics\\ University of California Riverside\\ 900 Big Springs Drive, CA 92521}
\email{schultz@math.ucr.edu}
%\thanks
\date{}
\begin{abstract} 
We apply various topological methods to distinguish 
connected components of moduli spaces of complete Riemannian 
metrics of nonnegative sectional curvature on open manifolds.
The new geometric ingredient is that souls of nearby nonnegatively 
curved metrics are ambiently isotopic.
\end{abstract}
\maketitle
%\tableofcontents

\section{Introduction}
\label{sec: intro}

A fundamental structure result, due to Cheeger-Gromoll~\cite{CheGro}, 
is that any open complete manifold of $\sec\ge 0$
is diffeomorphic to the total space of a normal bundle to
a compact totally geodesic submanifold, called a soul.
A soul is not unique e.g. in the Riemannian
product $M\times\mathbb R^k$ of a closed manifold $M$ with $\sec\ge 0$
and the standard $\mathbb R^k$, the souls are of the form $M\times \{x\}$. 
Yet Sharafutdinov~\cite{Sha-souls}
proved that any two souls can be moved to each other by a diffeomorphism
that induces an isometry of the souls.

The diffeomorphism class of the soul may depend on the metric 
e.g. any two homotopy equivalent $3$-dimensional lens spaces 
$L, L^\prime$ become diffeomorphic after multiplying by 
$\mathbb R^3$~\cite{Mil-haup}, so taking non-homeomorphic 
$L, L^\prime$ gives two product metrics on 
$L\times\mathbb R^3=L^\prime\times\mathbb R^3$ with 
non-homeomorphic souls. It turns out that codimension $3$
is optimal, indeed, Kwasik-Schultz proved in~\cite{KwaSch-toral} 
that if $S$, $S^\prime$ are
linear spherical space forms such that $S\times\mathbb R^2$, 
$S^\prime\times\mathbb R^2$ are diffeomorphic, then $S$, $S^\prime$ are diffeomorphic. Another well-known example is that all homotopy $7$-spheres
become diffeomorphic after taking product with $\mathbb R^3$ 
(see Remark~\ref{rmk: exotic sphere times R3});
since some homotopy $7$-spheres~\cite{GroZil} have metrics of
$\sec\ge 0$, so do their products with $\mathbb R^3$, which 
therefore have nonnegatively curved metrics with
non-diffeomorphic souls. Codimension $3$
is again optimal, because any 
simply-connected manifold $S$ of dimension $\ge 5$ can be recovered
(up to diffeomorphism) from $S\times \mathbb R^2$
(see~\cite{KwaSch-toral} or Remark~\ref{rmk: ko=wh=0}).

Belegradek in~\cite{Bel}
used examples of Grove-Ziller~\cite{GroZil} to produce first 
examples of infinitely many nondiffeomorphic souls for metrics on 
the same manifold, e.g. on $S^3\times S^4\times\mathbb R^5$.
Other examples of simply-connected manifolds with infinitely many
nondiffeomorphic souls, and better control on geometry,
were constructed by Kapovitch-Petrunin-Tuschmann in~\cite{KPT}.

One motivation for the present work was to construct non-diffeomorphic
souls of the smallest possible codimension; of course, 
multiplying by a Euclidean space
then yields examples in any higher codimension.
We sharpen examples 
in~\cite{Bel} by arranging the soul to have codimension $4$ 
and any given dimension $\ge 7$.

\begin{thm}\label{intro-thm: codim 4 inf many souls} 
For each $k\geq 3$, there are infinitely many complete
metrics of $\sec\ge 0$ on 
$N=S^4\times S^k\times \mathbb R^4$ whose souls are
pairwise non-homeomorphic. 
\end{thm}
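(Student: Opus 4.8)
The plan is to produce the metrics on $S^4\times S^k\times\mathbb R^4$ as total spaces of vector bundles over closed nonnegatively curved manifolds, and to detect the souls by a topological invariant that is a diffeomorphism (indeed homeomorphism) invariant of the soul but that varies over an infinite family. Concretely, I would start from the Grove--Ziller metrics of $\sec\ge 0$ on the total spaces of $S^3$-bundles over $S^4$ (principal $\mathrm{SO}(4)$-bundles, or the associated sphere bundles), as exploited in~\cite{Bel}. The key point is that an $S^3$-bundle over $S^4$ has a nonnegatively curved metric, and so does its product with any compact homogeneous space; taking the product with $S^k$ and then with the standard $\mathbb R^4$ (equivalently, forming an appropriate rank-$4$ vector bundle whose sphere bundle is this $S^3$-bundle times $S^k$, so that the soul is $S^4\times S^k$ up to bundle isomorphism) yields complete metrics of $\sec\ge 0$ on a manifold that I must check is diffeomorphic to $S^4\times S^k\times\mathbb R^4$ for all members of the family.

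The main steps are then: (1) Identify an infinite family of closed $7$-manifolds $M_i$, each an $S^3$-bundle over $S^4$ (or $S^4\times S^k$-type soul after stabilizing by $S^{k-3}$), carrying $\sec\ge 0$ via Grove--Ziller, such that the $M_i$ are pairwise non-homeomorphic but the associated rank-$4$ vector bundles $E_i\to M_i$ have total space diffeomorphic to the fixed $N=S^4\times S^k\times\mathbb R^4$. Here one uses that many distinct bundles over $S^4$ become diffeomorphic --- or at least have diffeomorphic total spaces after crossing with a Euclidean factor --- because low-codimension surgery-theoretic obstructions (Whitehead group, normal invariants) vanish in the simply-connected case, exactly as in the discussion of codimension $3$ in the introduction; the codimension $4$ here gives extra room. (2) Verify that the souls of the resulting metrics are precisely (diffeomorphic to) the $M_i\times\{\mathrm{pt}\}$-type submanifolds, using Sharafutdinov: any soul of such a product/bundle metric is carried to the obvious zero-section soul by a diffeomorphism, hence is homeomorphic to $M_i$. (3) Distinguish the $M_i$ up to homeomorphism by a suitable characteristic-number invariant --- for $S^3$-bundles over $S^4$ one classically uses the linking form / the first Pontryagin class together with the Eells--Kuiper or Kreck--Stolz invariants, or simply cohomology ring data when the bundles are chosen appropriately --- arranging the family so that infinitely many homeomorphism types occur.

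The hard part will be step (1): simultaneously arranging (a) $\sec\ge 0$ on the total space, which restricts us to the Grove--Ziller list of bundles over $S^4$; (b) that all these different souls sit inside the \emph{same} manifold $N$, which is a delicate smoothing-theory / surgery computation showing the total spaces of the rank-$4$ bundles $E_i$ are all diffeomorphic to $S^4\times S^k\times\mathbb R^4$ (one expects to reduce this to showing the bundles become stably trivial or mutually isomorphic after one stabilization, invoking vanishing of the relevant obstructions in the metastable range); and (c) that the soul's dimension can be taken to be any prescribed $d\ge 7$, which is handled by replacing the $S^k$ factor appropriately (the statement fixes $k\ge3$, so $\dim(\text{soul})=4+k\ge7$). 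A secondary subtlety is ensuring the invariant used in step (3) is genuinely a homeomorphism invariant and not merely a diffeomorphism invariant; for the relevant $7$-manifolds this follows because the pertinent Pontryagin-type data is pinned down by topological $KO$-theory / rational Pontryagin classes, which are homeomorphism invariants by Novikov.
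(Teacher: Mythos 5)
Your high-level plan --- Grove--Ziller metrics on bundles over $S^4$, discriminate souls via rational Pontryagin classes, and reduce to verifying that all the total spaces coincide with the fixed $N$ --- is aimed in the right direction, and you correctly single out step (1) as the crux. However, two problems keep the proposal from landing.

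First, the souls cannot be products. You propose souls of the form $M_i\times S^{k'}$ with $M_i$ a $7$-dimensional $S^3$-bundle over $S^4$. But for nontrivial $M_i$ the open manifold $M_i\times S^{k'}\times\mathbb{R}^4$ is not even homotopy equivalent to $S^4\times S^k\times\mathbb{R}^4$ (already the cohomology ring distinguishes them), so this route cannot produce metrics on the stated $N$. What the paper actually does (Theorem~\ref{thm: codim 4 souls over S4} with $l=m=0$) is take the souls to be the sphere bundles $S(\xi_i)$, where $\xi_i$ is the rank $(k+1)$ vector bundle over $S^4$ with structure group $SO_3\subset SO_{k+1}$ classified by $i\in\pi_4(BSO_3)\cong\mathbb{Z}$. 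These are $S^k$-bundles over $S^4$, not products of a $7$-manifold with a sphere, and they are fiber homotopy trivial exactly when $i\equiv 0\pmod{12}$, because $[S^4,BSF_3]$ is finite. The manifold $N$ is then the total space of the pullback to $S(\xi_i)$ of the rank-$4$ bundle classified by $-i$, and $p_1(S(\xi_i))=\pm 4i$ distinguishes the souls up to homeomorphism.

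Second, the argument you gesture at for step (1) --- vanishing of obstructions ``in the metastable range'' --- is precisely the method of the earlier papers \cite{Bel,KPT} that the present paper replaces, and it only works when the codimension is large relative to the soul's dimension; it does not give codimension $4$, which is the whole point of this theorem. The missing ingredient is Proposition~\ref{prop: disk bundles pullback} together with Remark~\ref{rmk: fiberwise cone [Z, F]}: given fiber homotopy equivalences and the matching of rational Pontryagin classes, a fiberwise cone construction produces a \emph{tangential} homotopy equivalence of disk-bundle pairs; the relevant $L$-groups vanish by Wall's $\pi$-$\pi$ theorem (since the normal bundle to the soul has rank $\ge 3$, so $\partial Y\to Y$ is a $\pi_1$-isomorphism); and the normal invariant lies in a set mapping into $[S^4,F]=\pi_4(F)=0$, so all the disk bundles are in fact pairwise diffeomorphic. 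Without this finiteness-plus-vanishing argument, or an equivalent, the proposal is incomplete exactly at the step you identified as hard.
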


Similarly, in Theorem~\ref{thm: kpt modified} 
we sharpen Theorems B and C in~\cite{KPT}
to make the souls there of codimension $4$, 
in particular, we prove:
 
\begin{thm}\label{intro-thm: kpt modified} 
There exists an open simply-connected manifold $N$ that admits 
infinitely many complete metrics of $\sec\in [0,1]$
with pairwise non-homeomorphic codimension $4$
souls of diameter $1$. Moreover, one can choose $N$ so that 
each soul has nontrivial normal Euler class. 
\end{thm}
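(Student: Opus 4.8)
The plan is to sharpen Belegradek's examples~\cite{Bel} of infinitely many non-homeomorphic souls in codimension $5$ (on manifolds such as $S^3\times S^4\times\mathbb R^5$) in three respects at once: realize the ambient manifold as the total space of a \emph{rank-$4$} vector bundle over each soul; put on each metric the normalization $\sec\in[0,1]$ with soul of diameter $1$, as in the pinched Grove--Ziller metrics of Kapovitch--Petrunin--Tuschmann~\cite{KPT}; and choose the bundle to have nonzero Euler class. Observe first that the normal bundle of the soul \emph{must} be nontrivial --- if $N=M\times\mathbb R^4$ were a product, a topological destabilization argument (using $\mathrm{Wh}(1)=0$) would force all the souls to be homeomorphic. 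The isotopy theorem of this paper plays no role here: the statement is one of pure existence.

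\emph{The souls.} Fix $k\ge 3$; the souls will be $(4+k)$-dimensional. For $h\in\mathbb Z$ let $M_h=S(\eta_h)$ be the total space of a linear $S^k$-bundle over $S^4$ with vanishing Euler class and with $p_1(\eta_h)$ equal to a fixed nonzero multiple of $h$ in $H^4(S^4)\cong\mathbb Z$; for $k=3$ these are Milnor's bundles and exist for every $h$ by the computation $\pi_3(SO(4))\cong\mathbb Z^2$, and similarly in higher dimensions. Each $M_h$ is simply-connected, homotopy equivalent to $S^4\times S^k$, and --- being cohomogeneity one with codimension-two singular orbits --- carries a metric of $\sec\ge 0$ by Grove--Ziller~\cite{GroZil}. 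The $M_h$ are pairwise non-homeomorphic for distinct $|h|$: a homeomorphism $M_h\to M_{h'}$ preserves $p_1^{\mathbb Q}$ by Novikov's theorem and sends a generator of $H^4(M_h;\mathbb Q)\cong\mathbb Q$ to $\pm$ a generator, whereas $p_1(TM_h)=\pi^*p_1(\eta_h)$ is $h$ times a generator because $\pi^*\colon H^4(S^4)\to H^4(M_h)$ is injective; hence $h=\pm h'$.

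\emph{The ambient manifold and its metrics.} Take $N$ to be the total space of a rank-$4$ vector bundle $\nu_h\to M_h$ pulled back from $S^4$ and chosen so that $p_1(\nu_h)=-p_1(TM_h)$, i.e.\ so that $TM_h\oplus\nu_h$ is stably trivial; for the ``moreover'' clause one additionally arranges the bundle $\zeta_h$ over $S^4$ that $\nu_h$ is pulled back from to have nonzero Euler class, which is possible within $\pi_3(SO(4))\cong\mathbb Z^2$ and makes $e(\nu_h)=\pi^*e(\zeta_h)\ne 0$ since $\pi^*$ is injective in degree $4$; this $e(\nu_h)$ is exactly the normal Euler class of the soul $M_h$ in $N=E(\nu_h)$. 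Now $E(\nu_h)$ carries a complete metric of $\sec\ge 0$ with soul the zero section $M_h$: realize $\nu_h$ as associated to a principal bundle whose total space admits an invariant metric of $\sec\ge 0$ and apply O'Neill's formula to the induced Riemannian submersion, as in the connection-metric constructions of~\cite{Bel,KPT}; and by the pinching estimates of~\cite{KPT} one may take these metrics, after rescaling, to satisfy $\sec\in[0,1]$ with soul of diameter $1$.

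\emph{One manifold, many souls --- the main obstacle.} What remains, and what I expect to be the real difficulty, is that all $E(\nu_h)$ are diffeomorphic to one fixed open simply-connected manifold $N$. Following~\cite{Bel} one exhibits a single $N$ as the total space of a rank-$4$ bundle over each $M_h$ --- the $M_h$ are pairwise homotopy equivalent by maps matching up the $\nu_h$, and $p_1(TE(\nu_h))=p_1(TM_h)+p_1(\nu_h)=0$ is independent of $h$ --- but, the rank being exactly $4$ rather than large (as in the codimension-$5$ case, where total spaces are governed by stable data), one must control the diffeomorphism type of a low-codimension bundle total space directly. For this I would invoke the smoothing and engulfing techniques of Kwasik--Schultz~\cite{KwaSch-toral}, which apply because the $M_h$ are simply-connected of dimension $\ge 5$, so the relevant Whitehead and reduced projective class groups vanish. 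The delicate part is to carry out this destabilization while simultaneously keeping $e(\nu_h)\ne 0$ and retaining enough isometric symmetry for the O'Neill construction above; granting that, $N$ equipped with the metrics just built has infinitely many pairwise non-homeomorphic codimension-$4$ souls of diameter $1$ with nontrivial normal Euler class.
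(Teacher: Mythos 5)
The proposal tries to prove the statement over the base $S^4$, essentially following the route of Theorem~\ref{intro-thm: codim 4 inf many souls} in this paper, whereas the paper's own proof of Theorem~\ref{intro-thm: kpt modified} deliberately moves to a very different construction over $X=S^2\times S^2\times S^2$ (Theorem~\ref{thm: kpt modified} and Remark~\ref{rmk: kpt with nonzero Euler class}). There are two genuine gaps, the first of which is the decisive one.

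\emph{The geometric gap.} You claim that after invoking Grove--Ziller and then rescaling ``by the pinching estimates of [KPT] one may take these metrics to satisfy $\sec\in[0,1]$ with soul of diameter $1$.'' This is precisely what fails for Grove--Ziller metrics on $S^k$-bundles over $S^4$, and it is the entire reason Theorem~\ref{intro-thm: kpt modified} is stated as a separate theorem from Theorem~\ref{intro-thm: codim 4 inf many souls}. One can rescale any metric of $\sec\ge 0$ to have $\sec\le 1$ or to have diameter $1$, but not both simultaneously unless the product (curvature)$\times$(diameter)$^2$ is uniformly bounded over the family, and the Grove--Ziller cohomogeneity-one metrics on the manifolds $S(\eta_h)$ have no such bound as $|h|\to\infty$. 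The paper's Theorem~\ref{thm: kpt modified} gets the uniform bound ``for free'' by realizing the souls $M_{\gamma,\eta}$ as Riemannian quotients of the fixed space $(S^3)^3\times_\rho S^3$ by a torus, and the total spaces $E(\tilde\zeta)$ as Riemannian submersions from $(S^3)^3\times S^3\times\mathbb R^4$: since all metrics are quotients of one fixed space, the curvature and diameter bounds are universal. No analogous uniformity is available for bundles over $S^4$ with varying $p_1$, and nothing in the KPT arguments supplies it. The same issue explains why the paper's Theorem~\ref{intro-thm: codim 4 inf many souls} (which uses $S^4$) makes no pinching or diameter assertion.

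\emph{The topological gap.} You flag showing that all $E(\nu_h)$ are diffeomorphic to a fixed $N$ as ``the main obstacle,'' and propose to deal with it via ``the smoothing and engulfing techniques of Kwasik--Schultz.'' That reference concerns recovering a manifold $S$ from $S\times\mathbb R^2$ (linear space forms, codimension two); it is not the relevant tool for identifying total spaces of rank-$4$ bundles over non-diffeomorphic bases. What the paper actually uses is Proposition~\ref{prop: disk bundles pullback} (finiteness of diffeomorphism types of $D(b^\#\a)$ via the tangential surgery exact sequence, using that $[Y,F]$ is finite and that the relevant $L$-groups vanish by the $\pi$--$\pi$ condition when $k_{\a_0}\ge 2$), together with Remark~\ref{rmk: fiberwise cone [Z, F]} to upgrade finiteness to an actual diffeomorphism. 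Applying Proposition~\ref{prop: disk bundles pullback} also requires checking that the sphere bundles $S(\a)$, $S(\b)$ stay in finitely many \emph{fiber homotopy} types as the parameter varies; for your family one would need to verify this for $S(\zeta_h)$ and $S(\eta_h)$ over $S^4$ (fixing $e(\zeta_h)$ while letting $p_1$ run, and keeping $e(\eta_h)=0$), and you never check it. Finally, your opening observation that a product $M\times\mathbb R^4$ cannot carry infinitely many non-homeomorphic souls ``by $\mathrm{Wh}(1)=0$'' is simply false: $N=S^4\times S^k\times\mathbb R^4$ is a product and Theorem~\ref{intro-thm: codim 4 inf many souls} gives it infinitely many non-homeomorphic souls; destabilization using $\mathrm{Wh}=0=\tilde K_0$ recovers $S$ from $S\times\mathbb R^2$, not from $S\times\mathbb R^4$.
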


We do not know examples of manifolds with 
infinitely many non-diffeomorphic souls of codimension $<4$, 
and in an effort to find such examples we systematically 
study vector bundles with diffeomorphic total spaces,
and among other things prove the following:

\begin{thm}\label{intro-thm: codim <4 finiteness}
Suppose there is a manifold $N$ that admits 
complete nonnegatively curved metrics with
souls $S_k$ of codimension $<4$
such that the pairs $(N, S_k)$ lie in infinitely many 
diffeomorphism types.
If $\pi_1(N)$ is finite, $S_k$ is orientable,
and $\dim(S_k)\ge 5$, then\newline
$\mbox{\quad}\textup{(1)}$ $\pi_1(N)$ is nontrivial
and $\dim(S_k)$ is odd;\newline
$\mbox{\quad}\textup{(2)}$
the products $S_k\times\mathbb R^3$ lie in finitely
many diffeomorphism types. 
\end{thm}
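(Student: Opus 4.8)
The plan is to translate the statement into pure smooth topology and then run surgery theory together with the theory of completions of open manifolds. \emph{Reduction to bundles.} By the Cheeger--Gromoll soul theorem $N$ is diffeomorphic to the total space $E(\nu_k)$ of the normal bundle $\nu_k$ of $S_k$, a vector bundle of rank $c_k\le 3$, and the diffeomorphism type of the pair $(N,S_k)$ is the same datum as the isomorphism type of $\nu_k$ over $S_k$. Since every $S_k$ is a deformation retract of the fixed $N$, all the $S_k$ are homotopy equivalent; passing to a subsequence I assume $c_k=c$ and $d:=\dim S_k$ are constant and $\pi_1(S_k)\cong\pi_1(N)=\pi$ is a fixed finite group. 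The first substantive point is a tangential rigidity: a diffeomorphism $E(\nu_k)\to E(\nu_0)$ yields, after composing with the bundle projection and restricting to zero sections, a homotopy equivalence $g_k\colon S_k\to S_0$, and comparing stable tangent bundles (both total spaces deformation retract onto their souls) gives $TS_k\oplus\nu_k\cong g_k^{*}(TS_0\oplus\nu_0)$ over $S_k$. Hence the normal invariant of $g_k$ is the class $\nu_k\ominus g_k^{*}\nu_0$, a difference of two bundles of rank $\le 3$; in particular the rational Pontryagin classes of the souls are pinned down by those of a fixed manifold together with the degree-four classes $p_1(\nu_k)$.

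\emph{Fixing the boundary.} Because $E(\nu_k)=\operatorname{int}D(\nu_k)$, the disk bundle $D(\nu_k)$ is a completion of $N$ in the sense of Siebenmann, with boundary the sphere bundle $S(\nu_k)$, so the end of $N$ is proper-homotopy equivalent to $S(\nu_k)$ for every $k$. The set of completions of a fixed tame open manifold is controlled by the Whitehead group of the fundamental group of the end, which here is built from the finite group $\pi$ and $\pi_1(S^{c-1})$ (trivial, $\mathbb Z$, or a finite cyclic group). Modulo this ambiguity, a further passage to a subsequence lets me assume $D(\nu_k)\cong\bar N$ and $S(\nu_k)\cong\Sigma$ are independent of $k$. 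The problem has become: how many codimension-$\le 3$ souls (equivalently, rank-$\le 3$ vector-bundle structures) can the fixed compact manifold $\bar N$ carry with the fixed boundary $\Sigma$, and how do they behave after crossing with $\R^3$?

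\emph{The dichotomy.} The cores $S_k\subset\bar N$ are organized by a surgery exact sequence with normal-invariant term $[S_0,G/O]$ and surgery-obstruction term $L_{*}(\pi)$. The possible normal invariants are doubly constrained — by the tangential rigidity (they are differences of rank-$\le 3$ bundles) and by the fact that the associated boundary sphere bundle is always the fixed $\Sigma$ — and these constraints leave only finitely many normal invariants unless $\pi\ne 1$. On the other side, an infinite family of pairwise non-diffeomorphic souls can only be produced through an infinite surgery-obstruction group; since the odd-dimensional surgery groups $L_{2i+1}(\pi)$ of a finite group are finite, locating where the Wall realizations producing the $S_k$ live forces $d$ to be odd. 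This is conclusion (1).

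\emph{Finiteness after stabilization.} Crossing with $\R^{3-c}$ replaces $N$ by the fixed manifold $N\times\R^{3-c}=E(\nu_k\oplus\epsilon^{3-c})$, the total space of a rank-three bundle over $S_k$, and the completion $S_k\times D^3$ of $S_k\times\R^3$ has the decisive feature that the boundary inclusion $S_k\times S^2\hookrightarrow S_k\times D^3$ is a $\pi_1$-isomorphism. Wall's $\pi$--$\pi$ theorem then identifies the relevant structure set with the normal invariants $[S_k,G/O]$, and one shows the self-homotopy-equivalences of $S_k\times D^3$ act with only finitely many orbits on the part of this set hit by the family $\{S_k\times D^3\}$: the low rank of the normal bundle has been absorbed into a stable range by the extra $\R^3$, and the residual freedom coming from $L_{2i+1}(\pi)$ is finite because $d$ is odd. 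Accounting for the finitely many choices introduced by the two completions yields conclusion (2). The main obstacle is precisely this last bookkeeping — identifying which degrees of freedom in $[S_0,G/O]$ and $L_{*}(\pi)$ survive the constraint of the fixed boundary and which are killed by the $\R^3$-stabilization — together with controlling the Whitehead-group and reduced-projective-class-group ambiguities in the completions when $\pi$ is finite but possibly non-cyclic.
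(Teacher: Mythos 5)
Your proposal identifies the right cast of characters (tangential rigidity, finiteness of normal invariants, the surgery exact sequence, Wall's $\pi$--$\pi$ theorem), but there are several substantive gaps that prevent the argument from closing.

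\textbf{The simply connected case is not handled.} You claim that $\pi_1(N)$ must be nontrivial because ``an infinite family of pairwise non-diffeomorphic souls can only be produced through an infinite surgery-obstruction group,'' and that the odd $L$-groups of finite groups are finite. But for $\pi_1=1$ and $d\equiv 3\pmod 4$ (e.g.\ $d=7$, which satisfies $d\ge 5$), the group $L_{d+1}(1)=L_{4r}(1)\cong\mathbb Z$ is infinite, so the criterion you state does not rule out $\pi_1(N)=1$. What actually kills the simply connected case is that the action of $L_{d+1}^s(1)$ on the (simple) structure set of $S_0$ factors through the \emph{finite} group $bP_{d+1}$, so an infinite obstruction group can still have finite orbit. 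That is the content of the paper's Theorem~\ref{thm: 1-connected or even dim pair finiteness}(i), and it is not implicit in anything you wrote.

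\textbf{The even-dimensional case is missing a Whitehead-torsion step.} When $d$ is even and $\pi$ is finite, $L^h_{d+1}(\pi)$ is indeed finite, so (after the normal-invariant reduction) the $S_k$ fall into finitely many classes in ${\bf S}^h(S_0)$, hence are pairwise h-cobordant after passing to a subsequence. But h-cobordant closed manifolds need not be diffeomorphic, so this alone does not produce a diffeomorphism of pairs $(N,S_k)\cong(N,S_0)$. The paper closes this gap by computing the torsion of the boundary homotopy equivalence of the h-cobordism as $(-1)^d\sigma^\ast-\sigma$ and invoking Wall's theorem that $\ast$ acts trivially on $\mathrm{Wh}(G)/\mathrm{SK}_1(\mathbb ZG)$: because $d$ is \emph{even}, the torsion lands in the finite group $\mathrm{SK}_1(\mathbb ZG)$, and after one more subsequence the s-cobordism theorem applies. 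This is precisely the step that breaks down for $d$ odd (see the paper's Example~\ref{ex: finiteness fails}), so its absence means you have not actually isolated the parity obstruction.

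\textbf{The ``fix the completion'' step is unjustified.} You propose to pass to a subsequence in which $D(\nu_k)\cong\bar N$ and $S(\nu_k)\cong\Sigma$ are constant, arguing that the completions are controlled by the Whitehead group of the end. But for $\pi$ finite the Whitehead group of the end can be \emph{infinite} (e.g.\ $\mathrm{Wh}(\mathbb Z_5)$), so the set of completion types can be infinite and there is no finite list to pass to a subsequence within. In fact the existence of infinitely many completions is exactly the phenomenon being studied here, so one cannot normalize it away. The paper avoids compactifying altogether: it works directly with the bases $B_\eta$, shows via Lemma~\ref{lem: s1-bundles are hcobordant} that the region between two sphere bundles is an h-cobordism, and runs the surgery argument on the closed souls rather than on a choice of completion of $N$.

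\textbf{Smaller issues.} Your sentence ``these constraints leave only finitely many normal invariants unless $\pi\ne 1$'' is backwards: the normal invariants of the tangential homotopy equivalences lie in the image of $[B_\eta,F]\to[B_\eta,F/O]$, which is finite for \emph{any} finite complex $B_\eta$, regardless of $\pi$; the finiteness of normal invariants is not the place where $\pi_1$ enters. And for part (2), after applying the $\pi$--$\pi$ theorem to $S_k\times D^3$ one must first upgrade the tangential homotopy equivalence to a \emph{simple} one before it represents an element of the simple structure set; the paper does this by attaching an h-cobordism to the boundary (the passage from $M_i\times D^3$ to $Q_i$ in Proposition~\ref{prop: equal norm inv implies diffeo}), and that fix-up is absent from your sketch.
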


In Example~\ref{ex: finiteness fails} we describe two 
infinite families of closed manifolds with the property that
if each manifold in the family admits a metric of $\sec\ge 0$, 
then they can be realized as codimension $1$ souls in the 
same open manifold $N$. In general, if $M$
is a closed oriented smooth manifold of dimension $4r-1\ge 7$
whose fundamental group contains a nontrivial finite order element, 
then there are infinitely many pairwise non-homeomorphic closed 
manifolds $M_i$ such that $M_i\times\mathbb R^3$ is diffeomorphic to 
$M\times\mathbb R^3$ (see~\cite{ChaWei}); thus if each $M_i$ 
admits a metric of $\sec\ge 0$, then $M\times\mathbb R^3$
carries infinitely many (product) metrics with nondiffeomorphic
souls.

Another goal of this paper is to study moduli spaces 
of complete metrics of nonnegative sectional curvature 
on open manifolds.
Studying moduli spaces of Riemannian metrics that satisfy 
various geometric assumptions is largely a topological activity, 
see e.g.~\cite{WanZil}, \cite{KreSto},
\cite{NabWei-ihes, NabWei-fractal},
\cite{FarOnt-mod, FarOnt-Teich}, 
\cite{Ros} and references therein.

Let ${\mathfrak R}^{k,u}(N)$ 
denote the space of complete Riemannian 
$C^\infty$ metrics on a smooth manifold $N$ with
topology of uniform $C^k$-convergence, where $0\le k\le\infty$, 
and let ${\mathfrak R}^{k,c}(N)$ denote the 
same set of metrics
with topology of $C^k$-convergence on compact subsets. 
Let $\mathfrak R_{\sec\ge 0}^{k,u} (N)$, 
$\mathfrak R_{\sec\ge 0}^{k,c} (N)$ be the subspaces of 
$\mathfrak R^{k,u} (N)$, $\mathfrak R^{k,c} (N)$ respectively,
consisting of metrics of $\sec\ge 0$, and let
$\mathfrak M_{\sec\ge 0}^{k,u} (N)$, 
$\mathfrak M_{\sec\ge 0}^{k,c} (N)$,
$\mathfrak M^{k,u} (N)$, $\mathfrak M^{k,c} (N)$  
denote the corresponding moduli spaces, 
i.e. their quotient spaces by the $\mathrm{Diff}(N)$-action via pullback.
%The obvious continuous inclusions between these moduli spaces
%form a commutative square.
We adopt:

\begin{convention} 
\label{intro-convention}
If an assertion about a moduli space or 
a space of metrics holds for any $k$, then the superscript
$k$ is omitted from the notation, and if $N$ is compact, then
$c$, $u$ are omitted.\end{convention}

The space ${\mathfrak R}^{c}(N)$ is closed under convex combinations~\cite{FegMil}
and hence is contractible, in particular, ${\mathfrak M}^{c}(N)$ is 
path-connected. By contrast, if $N$ is non-compact,
${\mathfrak M}^{u}(N)$ typically has uncountably many connected
components because metrics in the same component of 
${\mathfrak M}^{u}(N)$ lie within a finite uniform distance 
of each other, while the uniform distance is infinite
between metrics with different asymptotic geometry 
(such as rotationally symmetric metrics on $\mathbb R^2$ with non-asymptotic 
warping functions).

It was shown in~\cite{KPT} that metrics with non-diffeomorphic
souls lie in different components of 
$\mathfrak M_{\sec\ge 0}^{c} (N)$ provided any
two metrics of $\sec\ge 0$ on $N$ have souls that intersect,
which can be forced by purely topological assumptions on $N$ 
e.g. this holds if $N$ has a soul with nontrivial normal Euler class,
of if $N$ has a codimension $1$ soul. 

A simple modification of
the proof in~\cite{KPT} shows (with no extra assumptions on $N$) that
metrics with non-diffeomorphic souls lie in different components of 
$\mathfrak M_{\sec\ge 0}^{u} (N)$.
In fact, this result and the result of~\cite{Sha-souls} 
that any two souls of the same metric
can be moved to each other by a diffeomorphism of the ambient 
nonnegatively curved manifold have the following common generalization.

\begin{thm}
\label{intro-thm: ambient isotopy} 
\textup{(i)} If two metrics are sufficiently close in
$\mathfrak R^u_{\sec\ge 0} (N)$, their souls are ambiently 
isotopic in $N$.\newline 
\textup{(ii)}
The map associating
to a metric $g\in \mathfrak R^u_{\sec\ge 0} (N)$
the diffeomorphism type of the pair $(N,\ \textup{soul of}\ g)$ 
is locally constant.\newline
\textup{(iii)}
The diffeomorphism type of the pair
$(N,\ \textup{soul of}\ g)$ is constant
on connected components of $\mathfrak M^u_{\sec\ge 0} (N)$.
\end{thm}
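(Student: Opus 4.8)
The plan is to prove part (i) first, since (ii) is essentially a reformulation of (i), and (iii) follows from (ii) by a standard connectedness argument. For part (i), the strategy is to combine the Sharafutdinov retraction with a compactness/convergence argument. Suppose $g_i \to g_\infty$ in $\mathfrak R^u_{\sec\ge 0}(N)$. By Sharafutdinov's work~\cite{Sha-souls}, for each metric there is a distance-nonincreasing retraction $\mathrm{sh}_{g}\co (N,g)\to S_g$ onto a soul; the key point is that the soul and this retraction depend continuously (in an appropriate sense) on the metric, because the constructions (Cheeger--Gromoll convex exhaustion, Busemann functions of rays) are built from the distance function, which converges uniformly together with its derivatives as $g_i\to g_\infty$. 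First I would fix a soul $S_\infty$ of $g_\infty$ and a tubular neighborhood identification $N\cong \nu(S_\infty)$. Then I would show that for $i$ large, the Sharafutdinov retraction $\mathrm{sh}_{g_i}$ restricted to a large compact region containing $S_\infty$ is $C^1$-close to $\mathrm{sh}_{g_\infty}$, so that $S_{g_i}=\mathrm{sh}_{g_i}(S_\infty)$ is a submanifold $C^1$-close to $S_\infty$, hence isotopic to it via the straight-line homotopy in the normal bundle coordinates (a standard fact: two $C^1$-close submanifolds of the same dimension are ambiently isotopic, the isotopy supported near them).

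The technical heart is making ``depends continuously on the metric'' precise and uniform. Here the uniform topology is essential: on $\mathfrak R^{u}_{\sec\ge 0}(N)$ a small perturbation changes the metric by a bounded amount everywhere, so the souls of $g_i$ cannot escape to infinity — they stay in a fixed compact set $K\subset N$ — and the geometry of $(N,g_i)$ on $K$ together with a fixed-size neighborhood is uniformly controlled. This is what fails in the $C^c$ (compact-convergence) topology and is exactly why part (i) is stated for $\mathfrak R^u$. Concretely, I would argue: (a) there is a compact $K$ containing every soul of every $g_i$ for $i$ large, using that souls are totally geodesic and lie ``at the bottom'' of the manifold, plus the finite uniform distance bound; (b) on $K$, the functions defining the soul (the Cheeger--Gromoll $C^\infty$-approximated convex exhaustion, or equivalently Sharafutdinov's construction) converge in $C^1$; (c) therefore the souls converge in $C^1$ as submanifolds. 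Step (b) is where one has to be careful, since the original Cheeger--Gromoll soul is built from non-smooth convex functions; I would either invoke the smoothing of Greene--Shiohama / Cheeger--Gromoll or work directly with the Sharafutdinov retraction, whose regularity and continuous dependence on $g$ are what I really need.

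Granting (i), part (ii) is immediate: the map $g\mapsto [(N,S_g)]$ (diffeomorphism type of the pair) is locally constant because an ambient isotopy in $N$ is in particular a diffeomorphism of $N$ carrying $S_{g_i}$ to $S_{g_\infty}$, and the diffeomorphism type of a pair is invariant under such. For part (iii), recall $\mathfrak M^u_{\sec\ge 0}(N)$ is the quotient of $\mathfrak R^u_{\sec\ge 0}(N)$ by $\mathrm{Diff}(N)$ acting by pullback; pulling a metric back by $\phi\in\mathrm{Diff}(N)$ replaces a soul $S_g$ by $\phi^{-1}(S_g)$, so the pair $(N,S_{\phi^*g})\cong(N,S_g)$ — the diffeomorphism type of the pair is a $\mathrm{Diff}(N)$-invariant function on $\mathfrak R^u_{\sec\ge 0}(N)$, hence descends to a well-defined function on the moduli space. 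By (ii) it is locally constant upstairs, hence locally constant on the quotient (the quotient map is open), hence constant on each connected component. The main obstacle, as noted, is step (b) of part (i): establishing the uniform $C^1$-continuous dependence of the soul on the metric, which requires tracking the regularity through Sharafutdinov's retraction (or through a smoothed Cheeger--Gromoll exhaustion) and using the uniform topology crucially to keep everything inside a fixed compact set with uniformly bounded geometry.
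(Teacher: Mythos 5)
Your reduction of (ii) and (iii) to (i) matches the paper exactly, and your observation that the uniform topology is needed to keep the souls in a fixed compact set is correct and is the first step of the paper's argument as well. The gap is in step (b), which you correctly flag as ``where one has to be careful'' but do not fill. You propose to prove that the Sharafutdinov retraction depends $C^1$-continuously on the metric, so that $S_{g_i}=\mathrm{sh}_{g_i}(S_\infty)$ is a submanifold $C^1$-close to $S_\infty$. There is no known result asserting such $C^1$-continuous dependence, and it is not easy to extract from the existing literature: Wilking's proof that Sharafutdinov retractions are smooth is delicate, and nothing in it gives uniform $C^1$ control under perturbation of the ambient metric in the $C^0$ (or even $C^k$) uniform topology. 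The Busemann/convex-exhaustion construction depends on the metric only through distance functions, which a priori gives Lipschitz control, not derivative control. So ``establishing the uniform $C^1$-continuous dependence of the soul on the metric'' is not a technical detail to be tracked through Sharafutdinov's or Guijarro's arguments; it is an open assertion, and as stated your proof of (i) does not go through.

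The paper gets around this precisely by never proving any $C^1$ estimate for the retraction. Its Lemma~2.2 argues in two stages with quite different inputs. First, using only the fact that the Sharafutdinov maps $p_i\co N\to S_i$ are distance-nonincreasing and that $g_i\to g$ uniformly at the level of \emph{distance functions}, it shows via Ascoli that $p\tinycirc p_i|_S$ subconverges to an isometry of $(S,d)$, hence is a diffeomorphism for large $i$, so $p_i|_S\co S\to S_i$ is an injective immersion between closed manifolds of the same dimension and therefore a diffeomorphism. This replaces your $C^1$-closeness argument by a soft compactness argument that needs no derivative estimates. Second, to upgrade the diffeomorphism $p_i|_S\co S\to S_i$ to an \emph{ambient} isotopy, the paper invokes Wilking's structural result that there is a diffeomorphism $e_i\co E(\nu_i)\to N$ under which $p_i$ becomes the normal-bundle projection; then $e_i^{-1}(S)$ is the image of a section of $\nu_i$, and the ambient isotopy is the elementary fact that any two sections of a vector bundle are ambiently isotopic (one can straight-line them to the zero section), pushed forward by $e_i$. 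So instead of your ``$C^1$-close submanifolds are isotopic,'' the isotopy comes from the bundle structure, which needs no closeness at all once $p_i|_S$ is known to be a diffeomorphism. If you want to salvage your approach, the correct target is not $C^1$-continuity of $\mathrm{sh}_{g}$ in $g$, but rather the two facts the paper actually uses: $C^0$-uniform convergence of the distance functions plus Wilking's two theorems (smoothness of $p_i$ and the splitting $N\cong E(\nu_i)$).
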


Theorem~\ref{intro-thm: ambient isotopy} 
also holds for $\mathfrak M_{\sec\ge 0}^{c} (N)$ provided any
two metrics of $\sec\ge 0$ on $N$ have souls that intersect.

Thus to detect different connected components of 
$\mathfrak M_{\sec\ge 0}^u (N)$ it is enough to produce nonnegatively
curved metrics on $N$ such that no self-diffeomorphism of $N$
can move their souls to each other. From
Theorem~\ref{intro-thm: codim 4 inf many souls} we deduce:

\begin{cor}
For any integers $k\ge 3$, $m\ge 4$ the space
$\mathfrak M_{\sec\ge 0}^{u} (S^4\times S^k\times \mathbb R^m)$ 
has infinitely many connected components that lie in 
the same 
component of $\mathfrak M^{u} (S^4\times S^k\times \mathbb R^m)$. 
\end{cor}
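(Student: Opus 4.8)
The plan is to deduce the corollary from the two main inputs already available: Theorem~\ref{intro-thm: codim 4 inf many souls}, which supplies infinitely many metrics of $\sec\ge 0$ on $S^4\times S^k\times\mathbb R^4$ with pairwise non-homeomorphic souls, and Theorem~\ref{intro-thm: ambient isotopy}(iii), which says the diffeomorphism type of the pair $(N,\text{soul})$ is constant on components of $\mathfrak M^u_{\sec\ge 0}(N)$. The first step is to promote the codimension-$4$ examples to arbitrary codimension $m\ge 4$: given the infinitely many metrics $g_i$ on $S^4\times S^k\times\mathbb R^4$, take Riemannian products $g_i\times g_{\mathrm{eucl}}$ on $S^4\times S^k\times\mathbb R^m$ (writing $\mathbb R^m=\mathbb R^4\times\mathbb R^{m-4}$). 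These are complete metrics of $\sec\ge 0$, and since the soul of a Riemannian product $X\times\mathbb R^{m-4}$ is the soul of $X$, the souls of the $g_i\times g_{\mathrm{eucl}}$ are exactly the souls of the $g_i$, hence still pairwise non-homeomorphic.

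Next I would observe that non-homeomorphic souls force the pairs $(N, \text{soul})$ to lie in distinct diffeomorphism types — in fact already distinct homeomorphism types of the soul alone suffices — so by Theorem~\ref{intro-thm: ambient isotopy}(iii) the classes $[g_i\times g_{\mathrm{eucl}}]$ lie in pairwise distinct connected components of $\mathfrak M^u_{\sec\ge 0}(N)$ with $N=S^4\times S^k\times\mathbb R^m$. That gives infinitely many components of the moduli space of nonnegatively curved metrics. The remaining point is that all of these map into a single component of the full moduli space $\mathfrak M^u(N)$. For this I would show the metrics $g_i\times g_{\mathrm{eucl}}$ all lie in one component of $\mathfrak R^u(N)$, after which passing to the quotient $\mathfrak M^u(N)$ only collapses things further, so they remain in one component there.

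The main obstacle is exactly this last connectedness claim in $\mathfrak R^u(N)$, because (as the introduction stresses) $\mathfrak M^u$ is badly disconnected precisely when metrics have different asymptotic geometry, and a naive linear homotopy $t\mapsto (1-t)(g_i\times g_{\mathrm{eucl}}) + t(g_j\times g_{\mathrm{eucl}})$ need not stay at finite uniform distance unless the $g_i$ are themselves asymptotically controlled. The way around this is to exploit that the $g_i$ come from the construction in Theorem~\ref{intro-thm: codim 4 inf many souls}: they are built on $S^4\times S^k\times\mathbb R^4$ as metrics that outside a compact set agree with a fixed metric on the ends (this is how such Cheeger–Gromoll type examples are produced — glue varying metrics on a compact core to a fixed cylindrical/conical end), so any two of them differ only on a compact set and hence lie within finite uniform distance; then the straight-line homotopy through metrics (which stays positive-definite and complete, though of course leaving $\sec\ge 0$) is a uniformly bounded path connecting them in $\mathfrak R^u(N)$. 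So the key step to get right is to extract from the proof of Theorem~\ref{intro-thm: codim 4 inf many souls} the fact that the metrics can be taken to coincide near infinity; granting that, the corollary follows by combining the product construction, the ambient isotopy theorem, and this compact-support homotopy.
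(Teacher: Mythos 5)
Your overall plan is the right one, and the first three steps are exactly what the paper intends: take the metrics $g_i$ on $S^4\times S^k\times\mathbb R^4$ from Theorem~\ref{intro-thm: codim 4 inf many souls}, multiply by the Euclidean $\mathbb R^{m-4}$, observe that the souls stay non-homeomorphic, and invoke Theorem~\ref{intro-thm: ambient isotopy}(iii) to separate components of $\mathfrak M^u_{\sec\ge 0}(N)$.

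The gap is in the last step, and you have put your finger on it yourself. Your proposed fix --- that the $g_i$ ``are built as metrics that outside a compact set agree with a fixed metric on the ends'' --- is an assumption not borne out by the construction. In the proof of Theorem~\ref{thm: codim 4 souls over S4} the metrics are Grove--Ziller submersion metrics on the vector bundles $E(\eta^{k,n}_{i,l-i+m})$ over the \emph{different} souls $S(\xi^k_i)$; they are then compared to $N$ via a non-identity diffeomorphism coming from surgery theory. Two such metrics do not differ on a compact set, and they do not coincide near infinity after the identifications. In fact the ends of the two metrics are a priori cylinders over different normal sphere bundles, so even ``can be taken to coincide'' requires a diffeomorphism matching these ends, which is not automatic.

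The paper handles this with Proposition~\ref{prop: guijarro modification}, which is the ingredient missing from your argument. Its proof does three things you would need to supply: (a) uses Guijarro's theorem to \emph{modify} each metric (changing it outside a small tubular neighborhood of the soul) so that it becomes the Riemannian product of a ray with a metric on the normal sphere bundle; (b) uses Proposition~\ref{prop: normal bundles are fiber homotopic} and Lemma~\ref{lem: s1-bundles are hcobordant} to show the region between the two normal sphere bundles is a trivial h-cobordism, hence the closed tubular neighborhoods are diffeomorphic; and (c) extends that diffeomorphism to a self-diffeomorphism of $N$ preserving the product structures on the ends, after which the straight-line homotopy keeps the metrics at finite uniform distance and so gives a path in $\mathfrak R^u(N)$. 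One should also check that the hypotheses of Proposition~\ref{prop: guijarro modification} (normal sphere bundle simply connected and of dimension $\ge 5$) hold, which they do here since $k\ge 3$, $m\ge 4$. Replacing your ``compact support near infinity'' claim with this proposition closes the gap and yields the corollary.
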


Similarly,  Theorem~\ref{intro-thm: kpt modified} yields 
an infinite sequence of metrics that lie in different connected components
of $\mathfrak M_{\sec\ge 0}^{c} (N)$ and in the same component
of $\mathfrak M^{u} (N)$.

Even if the souls are diffeomorphic they need not be ambiently 
isotopic, as is illustrated 
by the following theorem exploiting examples of smooth knots 
due to Levine~\cite{Lev}.

\begin{thm}\label{intro-thm: knots mod space}
If $L$ is a closed manifold of $\sec\ge 0$, 
then $N:=S^7\times L\times\mathbb R^4$ admits metrics
that lie in different connected components of 
$\mathfrak M_{\sec\ge 0}^u (N)$ and in the same 
component of $\mathfrak M^{u} (N)$, and such
that their souls are diffeomorphic to $S^7\times L$
and not ambiently isotopic in $N$.
\end{thm}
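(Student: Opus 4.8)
The plan is to build, for a fixed choice of a nonnegatively curved closed manifold $L$, an infinite family of metrics of $\sec\ge 0$ on $N=S^7\times L\times\mathbb R^4$ whose souls are all diffeomorphic to $S^7\times L$ but embed in $N$ as distinct knotted copies, distinguished by a knot invariant. The starting observation is that $N$ is diffeomorphic to the total space of the trivial bundle $S^7\times L\times\mathbb R^4\to S^7\times L$, and also to the total space of $\nu_i\to S^7\times L$ for any rank-$4$ bundle $\nu_i$ that is stably trivial but whose total space is still $S^7\times L\times\mathbb R^4$; more to the point, we do not even need to vary the bundle — the zero section of a single trivial bundle can be re-embedded by a knotting construction. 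Concretely, Levine~\cite{Lev} produces smooth embeddings $\Sigma^7\hookrightarrow S^7\times\mathbb R^4$ (equivalently $S^7\hookrightarrow S^{11}$ type knots, reinterpreted in the open manifold) that are not ambiently isotopic to the standard one; crossing with $L$ (and, if one wishes, absorbing $L$ into the construction) yields infinitely many pairwise non-isotopic embeddings $\iota_i\colon S^7\times L\hookrightarrow N$, all with trivial normal bundle, so that the normal bundle of $\iota_i$ is a trivial $\mathbb R^4$-bundle and the total space of a tubular neighborhood is again (an open subset diffeomorphic to) $N$.

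The next step is to realize each $\iota_i(S^7\times L)$ as a soul. Give $S^7\times L$ a product metric $g_0$ of $\sec\ge 0$ (using that $S^7$ carries such a metric and $L$ does by hypothesis), and on the trivial normal bundle put the product metric $g_0\times(\text{flat }\mathbb R^4)$; this is a complete metric of $\sec\ge 0$ on the tubular neighborhood, which is diffeomorphic to $N$, and $\iota_i(S^7\times L)$ is its soul (it is the zero section of a Riemannian product bundle, hence totally geodesic and the Sharafutdinov retraction collapses onto it). Transporting this metric across the chosen diffeomorphism to $N$ gives $g_i\in\mathfrak R^u_{\sec\ge 0}(N)$ whose soul is $\iota_i(S^7\times L)$. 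By Theorem~\ref{intro-thm: ambient isotopy}(iii), if $g_i$ and $g_j$ lay in the same connected component of $\mathfrak M^u_{\sec\ge 0}(N)$, the pairs $(N,\iota_i(S^7\times L))$ and $(N,\iota_j(S^7\times L))$ would be diffeomorphic; composing with an ambient isotopy argument (or directly, since diffeomorphic pairs in a manifold with the relevant homotopy data are ambiently isotopic by the $s$-cobordism/concordance machinery in the simply-connected-up-to-$\pi_1(L)$ range) this would contradict the non-isotopy of the Levine knots. Hence the $g_i$ lie in pairwise distinct components of $\mathfrak M^u_{\sec\ge 0}(N)$. Finally, all the $g_i$ lie in a single component of $\mathfrak M^u(N)$: they can be joined by first contracting the flat $\mathbb R^4$-directions (a uniform-$C^k$ path staying at finite uniform distance) to reduce to metrics supported near the souls, and then using that the underlying embeddings, while not isotopic, become isotopic after a bounded modification that is realized by a path of (not necessarily nonnegatively curved) complete metrics within finite uniform distance — equivalently, all $g_i$ are uniformly close to a fixed product metric on $N$ after an appropriate diffeomorphism, because the Levine knots all agree outside a compact set.

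The main obstacle is the last point — producing an explicit path in $\mathfrak M^u(N)$ connecting the $g_i$ — and, more subtly, ensuring that the Levine-type knots we use are genuinely detected by the pair invariant after crossing with $L$ and after passing to the ambient open manifold $N$ rather than the sphere. For the first, the key is that the uniform topology only sees behavior "at infinity," and since every $g_i$ is a product $g_0\times(\text{flat})$ outside a fixed compact set and the diffeomorphisms implementing the different knottings are supported in a fixed compact set, all the $g_i$ are uniformly isometric at infinity; a straightline homotopy in $\mathfrak R^c$ (legal by~\cite{FegMil}) combined with a compactly-supported diffeomorphism then connects them in $\mathfrak M^u(N)$. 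For the second, one invokes that Levine's knot invariants (e.g. the associated Seifert/linking data, or the isotopy-classification obstruction living in a suitable bordism or $L$-group) are non-trivial and survive multiplication by $L$ — here one uses $\dim S^7=7$ with the relevant surgery-theoretic classification of embeddings in the metastable range, exactly the range~\cite{Lev} operates in — so that the pairs $(N,\iota_i(S^7\times L))$ are pairwise non-diffeomorphic, which by Theorem~\ref{intro-thm: ambient isotopy}(iii) is all that is needed.
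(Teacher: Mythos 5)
Your approach has a genuine gap, and it centers on the choice of knots with \emph{trivial} normal bundle. If $e\co S^7\hookrightarrow S^7\times\mathbb R^4$ is a smooth embedding with trivial normal bundle whose composition with the projection $S^7\times\mathbb R^4\to S^7$ has degree one, then by Siebenmann's collar theorem (used in the paper in the proof of Lemma~\ref{lem: norm inv pulls back}) the complement of a tubular neighborhood of $e(S^7)$ is an open collar, so $S^7\times\mathbb R^4$ can be identified with the total space of the (trivial) normal bundle of $e$, taking $e(S^7)$ to the zero section. In other words, \emph{every} such knot with trivial normal bundle yields a pair $(S^7\times\mathbb R^4,\, e(S^7))$ diffeomorphic to the standard $(S^7\times\mathbb R^4,\, S^7\times\{0\})$. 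The same goes through after crossing with $L$. Thus all your metrics $g_i$ have pairs $(N,\ \textup{soul})$ in a single diffeomorphism class, and Theorem~\ref{intro-thm: ambient isotopy}(iii) — the only tool you invoke to separate components — gives you nothing. The invocation of ``$s$-cobordism/concordance machinery'' to upgrade diffeomorphism of pairs to ambient isotopy of souls is also not justified and runs in the wrong direction: a diffeomorphism of pairs need not be isotopic to the identity, so non-isotopic knots can and (here) do give diffeomorphic pairs.

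The paper's proof goes the other way. It shows that the subgroup $N_0(7,4)$ of $\pi_7(BSO_4)$ realized as normal bundles of knots $S^7\hookrightarrow S^{11}$ is $\mathbb Z_4$, hence \emph{nontrivial}, so there is a nontrivial rank-$4$ bundle $\xi$ over $S^7$ with $E(\xi)\cong S^7\times\mathbb R^4$. Grove--Ziller supplies a complete metric of $\sec\ge 0$ on $E(\xi)$ with soul the zero section, and pulling back along $p\co S^7\times L\to S^7$ gives a nontrivial bundle $p^\#\xi$ over $S^7\times L$ with $E(p^\#\xi)\cong N$ and a nonnegatively curved metric whose soul has normal bundle $p^\#\xi$. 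Now the nontriviality of the normal bundle is exactly the invariant that distinguishes the pair $(N,\ \textup{soul of}\ p^\#\xi)$ from the standard $(N,\ S^7\times L\times\{0\})$: a self-diffeomorphism of $N$ carrying one soul to the other would pull the trivial normal bundle back to $p^\#\xi$, contradicting its nontriviality. Then Lemma~\ref{lem: souls amb isot} separates the components of $\mathfrak M^u_{\sec\ge 0}(N)$, and Proposition~\ref{prop: guijarro modification} (Guijarro's cylindrical-end modification plus a convex combination) connects the metrics in $\mathfrak M^u(N)$. So the knot-theoretic input you need is the \emph{image} of $\Sigma^{11,7}\to\pi_7(BSO_4)$ (finite but nontrivial), not its infinite kernel; and you need a nonnegatively curved metric on a \emph{nontrivial} disk bundle, for which the Grove--Ziller construction is essential and cannot be replaced by the product metric you propose.
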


Here $L$ is allowed to have dimension $0$ or $1$, and in general,
throughout the paper we treat $S^1$, $\mathbb R$, and a point as 
manifolds of $\sec\ge 0$.

\begin{ex}{\it
For $L=S^5$, note that
any closed manifold in the homotopy type of $S^7\times S^5$
is diffeomorphic to $S^7\times S^5$; in fact 
the structure set
of $S^7\times S^5$ fits into the surgery exact sequence between
the trivial groups $\Theta_{12}$ and $\pi_7(F/O)\oplus\pi_5(F/O)$
\textup{\cite[Theorem 1.5]{Cro}}.
Thus any soul in $S^7\times S^5\times \mathbb R^4$
is diffeomorphic to $S^7\times S^5$, while 
Theorem~\ref{intro-thm: knots mod space} detects
different components of the moduli space.}
\end{ex}

As mentioned above, there exist exotic $7$-spheres with $\sec\ge 0$
that appear as codimension $3$ souls in $S^7\times\mathbb R^3$.
Examples with non-diffeomorphic
simply-connected souls of codimension $2$ seems considerably 
harder to produce, as is suggested by the following:

\begin{thm}\label{thm: simply-conn and codim 2}
If a simply-connected manifold
$N$ admits complete nonnegatively curved metrics with souls
$S$, $S^\prime$ of dimension $\ge 5$ and codimension $2$,
then 
%the composition of the inclusion $S^\prime\to N$ with a normal bundle 
%projection $N\to S$ is a homotopy equivalence with
%trivial normal invariant in $[S, F/O]$. In particular, 
$S^\prime$ is diffeomorphic to the connected sum of $S$ with 
a homotopy sphere. 
\end{thm}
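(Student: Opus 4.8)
The plan is to first pin down the normal bundles of the two souls and then feed the result into the surgery exact sequence. By the soul theorem $N$ deformation retracts onto each of $S$ and $S'$, so the inclusions $S\hookrightarrow N$ and $S'\hookrightarrow N$ are homotopy equivalences; hence $S$, $S'$ are closed simply-connected (so orientable) $n$-manifolds with $n=\dim S\ge 5$, and $N$ is an open oriented $(n+2)$-manifold. Fix a homotopy equivalence $h\co S'\to S$ compatible with the inclusions (take $h=r\circ j$, where $j\co S'\hookrightarrow N$ and $r\co N\to S$ is a homotopy inverse of $S\hookrightarrow N$). The normal bundles $\nu_S$, $\nu_{S'}$ are oriented $2$-plane bundles, hence complex line bundles $L\to S$ and $L'\to S'$, and $N$ is diffeomorphic to their total spaces $E(L)\cong E(L')$.

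The first and main step is to show $h^\ast c_1(L)=\pm c_1(L')$ in $H^2(S';\Z)$. Since the inclusions are homotopy equivalences, the fundamental classes of $S$ and $S'$ map to generators $[S],[S']$ of $H_n(N;\Z)\cong\Z$, so $[S']=\pm[S]$. By Poincar\'e duality on the open oriented manifold $N$ one has $H_n(N;\Z)\cong H^2_c(N;\Z)$, and the class dual to a closed $n$-submanifold is represented by the Thom class of its normal bundle, which has compact support because the submanifold is compact; hence the Thom classes satisfy $u_{S'}=\pm u_S$ in $H^2_c(N;\Z)$, and therefore also after pushing to $H^2(N;\Z)$. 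The restriction of the image in $H^2(N)$ of such a Thom class to the zero section is the Euler class, and $i_S^\ast$, $i_{S'}^\ast$ are isomorphisms onto $H^2(S)$, $H^2(S')$, so $c_1(L)=e(\nu_S)$ and $c_1(L')=e(\nu_{S'})$ correspond up to sign under $h^\ast$. Consequently $L'$ and $h^\ast L$ agree as complex line bundles up to complex conjugation, hence are isomorphic as real $2$-plane bundles, and $[L']=[h^\ast L]$ in $\widetilde{KO}(S')$.

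Next I would pass to tangent bundles. Since $TN|_S=TS\oplus L$, $TN|_{S'}=TS'\oplus L'$, and $TN|_{S'}\cong h^\ast(TN|_S)$ (because $S'\hookrightarrow N$ is homotopic to $(S\hookrightarrow N)\circ h$), combining with the previous step and cancelling $L'\cong h^\ast L$ stably gives $TS'\cong h^\ast TS$ in $\widetilde{KO}(S')$, i.e. $h$ is a tangential homotopy equivalence. Hence the normal invariant of $h$ in $[S,G/O]$ vanishes, so in the surgery exact sequence $L_{n+1}(\Z)\xrightarrow{\ \omega\ }\mathcal S(S)\xrightarrow{\ \eta\ }[S,G/O]$ (valid since $S$ is simply connected of dimension $\ge 5$) the class $[S',h]$ lies in the image of $\omega$. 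By Wall's realization theorem this image consists of structures of the form $[S\#\Sigma,-]$ with $\Sigma\in bP_{n+1}$ a homotopy sphere, so $S'\cong S\#\Sigma$.

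The main obstacle is the first step — identifying the Euler classes of the normal bundles of the two souls — and this is exactly where codimension $2$ enters: oriented rank-$2$ bundles are classified by a single cohomology class, which the argument above shows is rigid, whereas in codimension $3$ exotic spheres appear precisely because rank-$3$ bundles have many fibre-homotopy-equivalent incarnations. The surgery endgame is essentially routine once $h$ is known to be tangential; the only points needing care there are the standard fact that a tangential homotopy equivalence of closed manifolds has vanishing normal invariant, together with the standard description of $\operatorname{im}(\omega)$ via plumbing.
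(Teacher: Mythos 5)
Your first step is correct, and it takes a genuinely different route from the paper: you identify the normal Euler classes via Thom classes and Poincar\'e duality on the open manifold $N$, whereas the paper establishes a fiber homotopy equivalence of normal sphere bundles (Proposition~\ref{prop: normal bundles are fiber homotopic}) and then uses that $O_2\to G_2$ is a homotopy equivalence to promote this to a genuine bundle isomorphism (Corollary~\ref{cor: codim 2 and tang hom eq}). Both routes yield the same intermediate conclusion, namely $f^\#\nu_S\cong\nu_{S'}$ as rank-$2$ bundles; yours is more computational, the paper's is more structural and applies uniformly in ranks $1$ and $2$.

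However, your surgery endgame contains a genuine gap. You assert that a tangential homotopy equivalence of closed manifolds has vanishing normal invariant and call this a ``standard fact.'' It is false. Tangentiality only says that the composite $[S,G/O]\to[S,BO]$ kills the normal invariant $\mathfrak q(h)$; it does not force $\mathfrak q(h)=0$, since the kernel of $[S,G/O]\to[S,BO]$ is the image of $[S,G]$, which is generally nontrivial. Concretely, any homotopy $n$-sphere $\Sigma$ is tangentially homotopy equivalent to $S^n$ (both are stably parallelizable), yet the normal invariant of $\Sigma\to S^n$ is nonzero precisely when $\Sigma\notin bP_{n+1}$. If tangentiality alone implied trivial normal invariant, every homotopy sphere would lie in $bP_{n+1}$, which fails already for $n=8,9,10,\dots$.

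What you actually need, and what the paper supplies as Proposition~\ref{prop: h.e. on bases has trivial norm inv}, is the stronger statement that the \emph{unstable} isomorphism $f^\#\nu_S\cong\nu_{S'}$ together with the geometric fact that $N$ is the common total space forces $\mathfrak q(h)=0$. The proof is geometric: choose nested tubular neighborhoods $D(\nu_{S'})\subset D(\nu_S)\subset N$; by Lemma~\ref{lem: s1-bundles are hcobordant} the region between them is an h-cobordism, so there is a deformation retraction $r\colon D(\nu_S)\to D(\nu_{S'})$; since $D(\nu_S)\times I$ is an h-cobordism between $D(\nu_S)$ and $D(\nu_{S'})$, the map $r$ is normally cobordant to the identity and hence has trivial normal invariant; finally, by Lemma~\ref{lem: norm inv pulls back} the normal invariant of the disk-bundle map restricts to that of $h$ on the base, giving $\mathfrak q(h)=0$. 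Only at that point does the surgery exact sequence (with $L^s_{n+1}(1)$ acting through $bP_{n+1}$) deliver $S'\cong S\#\Sigma$. Without this geometric input your argument does not close.
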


Thus non-diffeomorphic codimension two simply-connected
souls are necessarily homeomorphic, while until now non-diffeomorphic 
homeomorphic closed manifold of $\sec\ge 0$ have only been
known in dimension $7$, see e.g~\cite{KreSto, GroZil}.
In the companion paper~\cite{BKS-mod2} we show that
for every integer $r\ge 2$ there is an open $(4r+1)$-dimensional
simply-connected manifold that admits two metrics with
non-diffeomorphic codimension $2$ souls.

Non-diffeomorphic simply-connected souls
do not exist in codimension $1$, except possibly when
the soul has dimension $4$; indeed, any two codimension
$1$ simply-connected souls are h-cobordant, and hence diffeomorphic
provided their dimension is $\neq 4$. 
By contrast, manifolds with nontrivial fundamental group
may contain non-homeomorphic codimension $1$ souls:

\begin{ex} (\cite{Mil-haup})\label{ex: mil-lens} 
{\it
Let $L, L^\prime$ be homotopy equivalent, non-homeomorphic
$3$-dimensional lens spaces, such as $L(7,1)$, $L(7,2)$.
Then $L\times S^{2k}$, $L^\prime\times S^{2k}$
are non-homeomorphic and h-cobordant for $k>0$, hence
they can be realized as non-homeomorphic souls in
$N:=L\times S^{2k}\times\mathbb R$, which is
diffeomorphic to $L^\prime\times S^{2k}\times\mathbb R$.
In particular, $\mathfrak M_{\sec\ge 0}^c (N)$ is not connected.}
\end{ex}

Codimension $1$ case is special both for geometric and topological
reasons. As we show in Proposition~\ref{prop: codim one souls},
if a manifold $N$ admits a metric with a codimension $1$ soul,
then the obvious map 
$\mathfrak M_{\sec\ge 0}^{\infty,u} (N)\to\mathfrak M_{\sec\ge 0}^{\infty,c} (N)$ 
is a homeomorphism, and either space is homeomorphic to the disjoint
union of the moduli spaces of all possible pairwise non-diffeomorphic souls
of metrics in $\mathfrak M^\infty_{\sec\ge 0} (N)$.

Kreck-Stolz~\cite{KreSto} used index-theoretic arguments
to construct a closed simply-connected $7$-manifold
$B$ which carries infinitely many
metrics of $\Ric>0$ that lie in different components of
$\mathfrak M^\infty_{\mathrm{scal}> 0} (B)$. 
It it was shown in~\cite{KPT} 
that some other metrics on $B$ have $\sec\ge 0$ and
lie in infinitely many different components of
$\mathfrak M^\infty_{\mathrm{scal}\ge 0} (B)$. 
In particular, we conclude

\begin{cor} 
\label{cor: Kreck-Stolz}
$\mathfrak M^{\infty, c}_{\sec\ge 0} (B\times\mathbb R)$
has infinitely many connected components.
\end{cor}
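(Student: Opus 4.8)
The plan is to combine the Kreck--Stolz construction on $B$ with Proposition~\ref{prop: codim one souls} and the work of~\cite{KPT}. First I would recall what is already known: there is a closed simply-connected $7$-manifold $B$ carrying infinitely many metrics $g_i$ of $\sec\ge 0$ that lie in pairwise distinct connected components of $\mathfrak M^\infty_{\mathrm{scal}\ge 0}(B)$. Since $B$ is closed, $B$ itself is a (codimension $0$, hence a fortiori we will want codimension $1$) soul; but the manifold we care about is $B\times\mathbb R$, in which $B\times\{0\}$ is a codimension $1$ soul of each product metric $g_i+dt^2$.

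Next I would invoke Proposition~\ref{prop: codim one souls}: because $B\times\mathbb R$ admits a metric with a codimension $1$ soul, the map $\mathfrak M^{\infty,u}_{\sec\ge 0}(B\times\mathbb R)\to\mathfrak M^{\infty,c}_{\sec\ge 0}(B\times\mathbb R)$ is a homeomorphism, and either space is homeomorphic to the disjoint union, over all diffeomorphism types of possible souls $S$, of the moduli spaces $\mathfrak M^\infty_{\sec\ge 0}(S)$ of those souls. Here every soul is diffeomorphic to $B$ (a codimension $1$ simply-connected soul is h-cobordant to $B$, and $\dim B=7\ne 4$, so it is in fact diffeomorphic to $B$), so this disjoint union degenerates to a single piece: $\mathfrak M^{\infty,c}_{\sec\ge 0}(B\times\mathbb R)$ is homeomorphic to $\mathfrak M^\infty_{\sec\ge 0}(B)$.

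Therefore it suffices to show $\mathfrak M^\infty_{\sec\ge 0}(B)$ has infinitely many components, and this is exactly the content of the cited result from~\cite{KPT}: the metrics $g_i$ lie in infinitely many components of $\mathfrak M^\infty_{\mathrm{scal}\ge 0}(B)$, and since $\mathfrak M^\infty_{\sec\ge 0}(B)\to\mathfrak M^\infty_{\mathrm{scal}\ge 0}(B)$ is continuous, the $g_i$ already lie in infinitely many components of $\mathfrak M^\infty_{\sec\ge 0}(B)$. Pulling this back through the homeomorphism above gives infinitely many components of $\mathfrak M^{\infty,c}_{\sec\ge 0}(B\times\mathbb R)$.

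The only genuinely delicate point is the identification of the soul: one must be sure that every complete $\sec\ge 0$ metric on $B\times\mathbb R$ has a soul diffeomorphic to $B$, so that Proposition~\ref{prop: codim one souls} produces a single moduli space rather than a disjoint union with extra pieces. This rests on the h-cobordism statement for codimension $1$ simply-connected souls recorded in the excerpt together with $\dim B = 7\neq 4$; apart from that, the argument is a formal concatenation of Proposition~\ref{prop: codim one souls} with the cited input from~\cite{KreSto} and~\cite{KPT}, so I expect no further obstacles.
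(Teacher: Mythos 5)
Your overall strategy is the same as the paper's: reduce the problem on $B\times\mathbb R$ to a statement about $\mathfrak M^\infty_{\sec\ge 0}(B)$ via Proposition~\ref{prop: codim one souls}, observing that every codimension-$1$ simply-connected soul of dimension $7\neq 4$ is diffeomorphic to $B$, so the disjoint union collapses to a single piece. That part of your argument is correct and is exactly what the paper does.

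The gap is in the final step, where you feed in the input that the Kreck--Stolz/KPT metrics $g_i$ lie in infinitely many components of $\mathfrak M^\infty_{\mathrm{scal}\ge 0}(B)$ and then pull this back along the continuous inclusion $\mathfrak M^\infty_{\sec\ge 0}(B)\to\mathfrak M^\infty_{\mathrm{scal}\ge 0}(B)$. The paper explicitly does \emph{not} treat the $\mathrm{scal}\ge 0$ statement as available: in the discussion immediately preceding Proposition~\ref{prop: via ricci flow} it states that the $g_i$ are only known to lie in different components of $\mathfrak M^\infty_{\mathrm{scal}>0}(B)$, and that the $\mathrm{scal}\ge 0$ version ``was asserted in \cite{KPT}'' but ``takes an additional argument which hopefully will be written by the authors of \cite{KPT}.'' The continuity trick you invoke does not apply to the established $\mathrm{scal}>0$ statement, because $\mathfrak M^\infty_{\sec\ge 0}(B)$ is not a subspace of $\mathfrak M^\infty_{\mathrm{scal}>0}(B)$ (a metric with $\sec\ge 0$ can have vanishing scalar curvature somewhere). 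Precisely to close this gap the paper proves Proposition~\ref{prop: via ricci flow}: using the B\"ohm--Wilking theorem that Ricci flow instantly improves a $\sec\ge 0$ metric on a closed manifold with finite $\pi_1$ to $\mathrm{Ric}>0$, together with Ebin's slice theorem, it shows that metrics of $\sec\ge 0$ and $\mathrm{Ric}>0$ lying in different components of $\mathfrak M^\infty_{\mathrm{Ric}>0}(B)$ also lie in different components of $\mathfrak M^\infty_{\sec\ge 0}(B)$. Since the $g_i$ have $\mathrm{Ric}>0$ and lie in different components of $\mathfrak M^\infty_{\mathrm{scal}>0}(B)$, hence of $\mathfrak M^\infty_{\mathrm{Ric}>0}(B)$, this gives the needed input. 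Your proof should route through Proposition~\ref{prop: via ricci flow} rather than through the unestablished $\mathrm{scal}\ge 0$ assertion.
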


We also give examples of
infinitely many isometric metrics that cannot be deformed 
to each other in through complete metrics of $\sec\ge 0$.

\begin{thm} \label{intro-thm: knots space of metrics}
If $n=4r-1$ and $3\le k\le 2r+1$
for some $r\ge 2$, then 
$\mathfrak R^u_{\sec\ge 0} (S^n\times\mathbb R^k)$
has infinitely many components that lie in the same component
of $\mathfrak R^u(S^n\times\mathbb R^k)$.
\end{thm}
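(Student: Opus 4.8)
The plan is to produce the required metrics as pushforwards of one product metric by diffeomorphisms supported in a common compact set, and to separate components of $\mathfrak R^u_{\sec\ge 0}$ by applying Theorem~\ref{intro-thm: ambient isotopy} to their souls. Let $g_0$ be the product on $S^n\times\mathbb R^k$ of a round $S^n$ and the flat $\mathbb R^k$; it is complete, has $\sec\ge 0$, and its souls are the slices $S^n\times\{x\}$. Fix a coordinate ball $B\subset S^n\times\mathbb R^k$ meeting $S^n\times\{0\}$ in an unknotted disc pair $(D^{n+k},D^n)$. For a smooth disc knot $\kappa$ — an isotopy class rel $\partial$ of embeddings $D^n\hookrightarrow D^{n+k}$ standard near $\partial D^n$, i.e.\ an element of Haefliger's knot group $C^n_k$ — splice $\kappa$ in inside $B$, replacing the trivial disc pair by a representative of $\kappa$, to obtain an embedded sphere $S_\kappa\subset S^n\times\mathbb R^k$ coinciding with $S^n\times\{0\}$ outside $B$. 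Since $k\ge 3$ one can arrange, for a suitable infinite family of $\kappa$, a diffeomorphism $\Phi_\kappa$ of $S^n\times\mathbb R^k$ supported in $B$ with $\Phi_\kappa(S^n\times\{0\})=S_\kappa$; put $g_\kappa:=(\Phi_\kappa^{-1})^{*}g_0$. Then $g_\kappa$ is isometric to $g_0$ (via $\Phi_\kappa$), hence complete with $\sec\ge 0$, it agrees with $g_0$ outside $B$, and $S_\kappa$ is a soul of $g_\kappa$.

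Granting this, the theorem follows from two observations. First, all $g_\kappa$ lie in one component of $\mathfrak R^u(S^n\times\mathbb R^k)$: the straight-line path $t\mapsto(1-t)g_0+tg_\kappa$ consists of metrics agreeing with $g_0$ outside $B$, hence complete, and it is continuous in the uniform $C^\infty$ topology because $g_\kappa-g_0$ has compact support. Second, for $\kappa\ne\kappa'$ the metrics $g_\kappa,g_{\kappa'}$ lie in different components of $\mathfrak R^u_{\sec\ge 0}(S^n\times\mathbb R^k)$: by Theorem~\ref{intro-thm: ambient isotopy}(i), together with a compactness-and-transitivity argument along a path, metrics in one component have ambiently isotopic souls, so it suffices to show that the souls $S_\kappa$ realise infinitely many ambient isotopy classes in $S^n\times\mathbb R^k$.

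The heart of the proof is therefore the topological claim that $\kappa\mapsto[S_\kappa]$ has infinite image, and this is where the numerical hypotheses are used. The condition $k\ge 3$ places us in codimension $\ge 3$, so the knots form a group, complements are simply connected, and general position together with the $h$-cobordism theorem (using $n=4r-1\ge 7$) are available. The condition $k\le 2r+1=(n+3)/2$ is exactly the range in which codimension-$k$ knots of $S^{4r-1}$ can be knotted — Haefliger's unknotting theorem forces triviality once $k\ge 2r+2$ — and in which an integer-valued Haefliger-type invariant, infinite on $C^{4r-1}_k$, is available; here the calculations of Haefliger and Levine, and the analysis of vector bundles with diffeomorphic total spaces from the earlier sections, enter. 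One then checks that this invariant is intrinsic to the pair $(S^n\times\mathbb R^k,S)$ — for instance through a linking-type quantity read off near the end $S^n\times S^{k-1}$ — so that it is constant on ambient isotopy classes and recovers the invariant of $\kappa$ up to the finitely many symmetries coming from self-diffeomorphisms of $S^n\times\mathbb R^k$; an infinite set stays infinite modulo a finite group, so the $[S_\kappa]$ are infinitely many.

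I expect this last step to be the main obstacle. Because $S^n\times\mathbb R^k$ is not a disc, there is a priori room for ``hidden'' ambient isotopies — exploiting the noncontractible factor — that untie or permute the spliced-in knots, so one must exhibit an invariant of $(S^n\times\mathbb R^k,S)$ robust enough to detect $\kappa$ and yet defined for every embedded homotopy $n$-sphere that is standard near infinity. A secondary technical point, needed so that the metrics agree outside a fixed compact set, is to ensure that enough of the knots $\kappa$ are realised by ambient diffeomorphisms supported in a ball, equivalently that the relevant disc-knot complements are standard; this again rests on codimension $\ge 3$ and the dimension restrictions.
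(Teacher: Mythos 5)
Your overall strategy --- realize Levine's infinitely many codimension-$k$ knots (with trivial normal bundle) as non-isotopic souls of isometric copies of the product metric on $S^n\times\mathbb R^k$, and separate components of $\mathfrak R^u_{\sec\ge 0}$ via Theorem~\ref{intro-thm: ambient isotopy} --- is the paper's strategy too. But the step where you connect the metrics inside one component of $\mathfrak R^u$ has a genuine flaw. You assert that, for a suitable infinite family of disc knots $\kappa$, there is a diffeomorphism $\Phi_\kappa$ of $S^n\times\mathbb R^k$ supported in the \emph{fixed} ball $B$ with $\Phi_\kappa(S^n\times\{0\})=S_\kappa$. No such family can exist. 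Each $\Phi_\kappa\vert_B$ is a diffeomorphism of $B\cong D^{n+k}$ rel $\partial$, hence represents a class in $\pi_0\,\mathrm{Diff}(D^{n+k},\partial)\cong\Theta_{n+k+1}$, which is a finite group (here $n+k\ge 10$). So some $\Phi_\kappa$, $\Phi_{\kappa'}$ are isotopic rel $\partial B$; extending that isotopy by the identity outside $B$ and composing with $\Phi_\kappa^{-1}$ produces an ambient isotopy of $S^n\times\mathbb R^k$ carrying $S_\kappa$ to $S_{\kappa'}$, contradicting the pairwise non-isotopy of souls on which the whole separation argument rests. In other words, the knotting of the soul is precisely what prevents it from being reached from $S^n\times\{0\}$ by a compactly supported diffeomorphism, so the convex-combination path is not available. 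A diffeomorphism of $S^n\times\mathbb R^k$ carrying $S^n\times\{0\}$ to the knotted soul does exist (using triviality of the normal bundle and a collar/$h$-cobordism argument to see that $S^n\times\mathbb R^k$ is a tubular neighborhood of the knotted sphere), but it is not compactly supported. The paper circumvents this by instead invoking Proposition~\ref{prop: guijarro modification}: Guijarro's theorem is used to deform each metric so that outside a tubular neighborhood of its soul it is a Riemannian product $\partial D\times\mathbb R_+$, a self-diffeomorphism of $N$ then matches the two cylindrical ends, and only afterwards is the convex combination taken in $\mathfrak R^u(N)$.

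By contrast, the step you flag as ``the main obstacle'' --- that ``hidden'' isotopies in $S^n\times\mathbb R^k$ might untie the spliced-in knots --- is a non-issue, and the paper dispatches it in one sentence. Hirsch's normalization places the knotted spheres inside $S^n\times\mathrm{Int}(D^k)$, the standard open tubular neighborhood of $S^n$ in $S^{n+k}$; any ambient isotopy in $S^n\times\mathrm{Int}(D^k)\cong S^n\times\mathbb R^k$ is in particular an ambient isotopy in $S^{n+k}$, so pairwise non-isotopic knots in $S^{n+k}$ remain pairwise non-ambiently-isotopic souls in $S^n\times\mathbb R^k$. No linking-type invariant of the pair near the end is required.
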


\begin{thm}\label{intro-thm: space of metrics h-cob}
$\mathfrak R^u_{\sec\ge 0} (N)$ has infinitely many components if\newline
\textup{(i)}
$N=L\times L(4r+1,1)\times S^{2k}\times\mathbb R$ 
where $L$ is any complete manifold
of $\sec\ge 0$ and nonzero Euler characteristic, and
$k\ge 3$, $r>0$, \newline
\textup{(ii)} $N=M\times\mathbb R$ where $M$ is a closed oriented manifold
of even dimension $\ge 5$ with $\sec\ge 0$ such that
$G=\pi_1(M)$ is finite and $\mathrm{Wh}(G)$ is infinite.
\end{thm}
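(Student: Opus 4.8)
The plan is to exploit Theorem~\ref{intro-thm: ambient isotopy}(i): subdividing a path into pieces along which consecutive metrics are close, one sees that two metrics in the same component of $\mathfrak R^u_{\sec\ge 0}(N)$ have ambiently isotopic souls. So it suffices, in each of (i) and (ii), to produce infinitely many complete metrics of $\sec\ge 0$ on $N$ with pairwise non-isotopic souls; these will in fact all be isometric, being pullbacks of a fixed product metric $g=g_M\times dt^2$ by self-diffeomorphisms of $N$. Here $M$ denotes the soul of $N$ for the product metric, a closed manifold of $\sec\ge 0$: in case (ii) it is the given $M$ and $N=M\times\mathbb R$, while in case (i) it is $L_0\times L(4r+1,1)\times S^{2k}$ with $L_0$ the soul of $L$, and $\chi(L)\ne 0$ forces $\pi_1(L)$, hence $\pi_1 M=\pi_1 L\times\mathbb Z/(4r+1)$, to be finite; one argues with this $M$ after harmlessly stabilizing the $h$-cobordisms below by the normal bundle of $M$ in $N$, so I will assume $N=M\times\mathbb R$. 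The invariant that separates the souls is Whitehead torsion: since $M$ is a closed oriented $\dim M$-manifold, every soul in $N$ is an embedded deformation-retract copy $S$ of a closed manifold homotopy equivalent to $M$, of codimension $1$; its normal line bundle is trivial, so $S$ separates $N$ and, together with a distant slice $M\times\{-t\}$, cuts off a compact $h$-cobordism whose torsion $\tau(S)\in\mathrm{Wh}(\pi_1 M)$ is well defined, is invariant under ambient isotopy of $S$, and is zero for $S=M\times\{0\}$.

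For the construction I would use the classical fact (already underlying Example~\ref{ex: mil-lens}) that the open interior of an $h$-cobordism $W$ both of whose ends equal $M$ is diffeomorphic to $M\times\mathbb R$. Comparing the two product structures on $\mathrm{int}(W)$ coming from the two ends yields a self-diffeomorphism $\Phi$ of $M\times\mathbb R$; since $\Phi^{n}$ corresponds to stacking $n$ copies of $W$, additivity of Whitehead torsion gives that the soul $S_n$ of $(\Phi^{n})^{*}g$, a copy of $M$ in $N$, satisfies $\tau(S_n)=n\,\tau(W)$ up to a sign independent of $n$. Hence, once we exhibit an $h$-cobordism $W$ from $M$ to $M$ whose torsion has infinite order in $\mathrm{Wh}(\pi_1 M)$, the metrics $(\Phi^{n})^{*}g$, $n\ge 0$, are complete, of $\sec\ge 0$, and have pairwise non-isotopic souls, and by the first paragraph they lie in pairwise distinct components of $\mathfrak R^u_{\sec\ge 0}(N)$, which is what we want.

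It remains to realize such a $W$. In case (i), $\pi_1 M$ is finite, so $\mathrm{Wh}(\pi_1 M)$ is finitely generated, contains $\mathrm{Wh}(\mathbb Z/(4r+1))$ --- which is free abelian of positive rank since $4r+1\ge 5$ --- as a direct summand, and carries a duality involution that is the identity modulo torsion. As $\dim M=\dim L_0+2k+3$ is odd and at least $9$, the $s$-cobordism theorem realizes an arbitrary $\sigma\in\mathrm{Wh}(\pi_1 M)$ by an $h$-cobordism $W_\sigma$ from $M$ to a closed manifold $M_\sigma$; doubling $W_\sigma$ along $M_\sigma$ gives an $h$-cobordism from $M$ to $M$ of torsion $\sigma+\overline{\sigma}$, and as $\sigma$ ranges over $\mathrm{Wh}(\pi_1 M)$ these elements fill out a subgroup of finite index modulo torsion, hence include elements of infinite order. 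In case (ii), $\dim M$ even and $\pi_1 M=G$ finite force only finitely many diffeomorphism types among the far ends $M_\sigma$: each is normally cobordant to $M$ through $W_\sigma$, so its normal invariant in $[M,G/O]$ vanishes and it lies in the image of the finite group $L^h_{\dim M+1}(\mathbb Z G)$. As $\mathrm{Wh}(G)$ is infinite, infinitely many $\sigma$ share one diffeomorphism type $N_0$ of far end; composing the corresponding $h$-cobordisms produces an $h$-cobordism from $N_0$ to $N_0$ of infinite-order torsion, and conjugating it by an $h$-cobordism from $M$ to $N_0$ --- which, since $\dim M$ is even, changes the torsion only by a torsion element --- yields the required $W$.

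The crux is this realization step, where the parity of $\dim M$, the finiteness of $\pi_1 M$, and the infinitude of $\mathrm{Wh}$ all enter essentially. One also needs to prove the two facts used freely above: that $\tau(S)$ is a well-defined ambient-isotopy invariant of an embedded soul --- which rests on identifying the complement of $S$ in $N$ with a pair of half-infinite cylinders --- and that the diffeomorphism $\Phi$ built from $W$ changes $\tau$ additively by $\tau(W)$. Theorem~\ref{intro-thm: ambient isotopy} is precisely what converts ``$\tau$ separates the souls'' into ``the metrics lie in different components of $\mathfrak R^u_{\sec\ge 0}(N)$'', a conclusion that would not follow without it.
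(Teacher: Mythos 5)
Your global strategy---exhibit infinitely many metrics whose souls are distinguished by a Whitehead-torsion invariant, then invoke the ambient-isotopy rigidity of souls to separate components---is the right one, and your treatment of case (ii) is essentially the paper's (Theorem~\ref{thm: even dim inf many comp}), though more roundabout: the paper simply doubles $W_0$ along its far end to get a cobordism from $M$ to $M$ with torsion $\sigma+(-1)^{\dim M}\bar\sigma\approx 2\sigma$, with no need to invoke Milnor's finiteness of diffeomorphism types in an $h$-cobordism class or to ``conjugate'' (and, incidentally, conjugation by $V$ changes the torsion by roughly $2\tau(V,M)$, not by a torsion element, so that sentence needs care).

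The genuine gap is in case (i). You claim that doubling $W_\sigma$ along $M_\sigma$ yields an $h$-cobordism from $M$ to $M$ of torsion $\sigma+\bar\sigma$, and that as $\sigma$ varies these fill out a finite-index subgroup mod torsion. But by Lemma~\ref{lem: h-cob}(ii) the torsion of the double is $\sigma+(-1)^{\dim M}\bar\sigma$, and in case (i) $\dim M$ is odd, so this is $\sigma-\bar\sigma$. By Wall's theorem, quoted in the paper, the involution $\ast$ is the identity on $\mathrm{Wh}(G)/\mathrm{SK}_1(\mathbb ZG)$ for $G$ finite, so $\sigma-\bar\sigma$ always lies in the finite group $\mathrm{SK}_1(\mathbb ZG)$. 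Thus the doubled $h$-cobordisms all have torsion of finite order, stacking them yields only finitely many non-isotopic souls, and your construction produces only finitely many components. This is exactly why the paper does \emph{not} double in the odd-dimensional case: it instead cites Hausmann's theorem, which directly supplies an $h$-cobordism from $L(4r+1,1)\times S^{2k}$ to itself whose torsion has infinite order (Corollary~\ref{cor: hausmann} and Proposition~\ref{prop: odd h-cob}), a non-obvious fact with no shortcut via doubling. Producing such an inertial $h$-cobordism with both ends $M$ is the whole difficulty in odd dimensions, and your argument does not supply it.

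A secondary issue in case (i): when $L$ is non-compact, $N=L\times L(4r+1,1)\times S^{2k}\times\mathbb R$ is a vector bundle of rank $\ge 2$ over its soul, not $M\times\mathbb R$, so your codimension-one invariant $\tau(S)$ (defined via the $h$-cobordism cut off by a soul and a distant slice) is not available; ``stabilizing by the normal bundle'' is not a harmless reduction here. The paper avoids this by switching criteria: rather than ambient isotopy of souls, it uses Proposition~\ref{prop: def retr and space of metrics}, which says that for metrics in one component of $\mathfrak R^u_{\sec\ge 0}(N)$ the restriction of a deformation retraction $N\to S'$ to another soul $S$ is homotopic to a diffeomorphism. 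The torsion of this restricted map is computed by Lemma~\ref{lem: h-cob}(i) to be $\approx -2k\tau(W,M)$ after stacking $k$ copies, and, after crossing with $L$, by $\chi(L)$ times that via the product formula---which is also precisely where the hypothesis $\chi(L)\neq 0$ enters, a role your sketch does not account for.
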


The proof of (i) relies on a geometric ingredient of independent
interest: if $S$, $S^\prime$ are souls of metrics lying in the same
component of $\mathfrak R^u_{\sec\ge 0} (N)$, then
the restriction to $S$ of any deformation retraction $N\to S^\prime$
is homotopic to a diffeomorphism; e.g. this applies to the
Sharafutdinov retraction.

{\bf Structure of the paper.} Section~\ref{sec: mod sp} contains various geometric results on the spaces of nonnegatively curved metrics, including
Theorem~\ref{intro-thm: ambient isotopy}.
Section~\ref{sec: souls of codim 4} contains proofs of
Theorems~\ref{intro-thm: codim 4 inf many souls}--\ref{intro-thm: kpt modified}
giving examples with infinitely many codimension $4$ souls. 
Sections~\ref{sec: vb with diffeo tot spaces}--\ref{sec: bundles of rank <4} 
is a topological study of vector bundles with diffeomorphic 
total spaces, especially those of fiber dimension $\le 3$;
in particular, there we prove 
Theorems~\ref{intro-thm: codim <4 finiteness}
and~\ref{thm: simply-conn and codim 2}.
In Section~\ref{sec: smooth knots} we prove
Theorem~\ref{intro-thm: knots mod space} implying that 
diffeomorphic souls they need not be ambiently 
isotopic; Theorem~\ref{intro-thm: knots space of metrics}
is also proved there. Section~\ref{sec: h-cob} contains the proof of
Theorem~\ref{intro-thm: space of metrics h-cob} and the geometric
result stated in the previous paragraph.

{\bf On topological prerequisites.} 
This paper employs a variety of topological tools,
which we feel are best learned from the following sources. 
We refer to books by Husemoller~\cite{Hus-fibr-bundl},
Milnor-Stasheff~\cite{MS-char-cl}, and 
Spanier~\cite[Chapter 6.10]{Spa-alg-top} 
for bundle theory and characteristic classes,
to books by Cohen~\cite{Coh}, and Oliver~\cite{Oli}, 
and Milnor's survey~\cite{Mil-wh} 
for Whitehead torsion and h-cobordisms, and
to monographs of Wall~\cite{Wal-book} and Ranicki~\cite{Ran-book}
for surgery theory. In the companion paper~\cite[Sections 3 and 8]{BKS-mod2}
we survey aspects of surgery that are most relevant to~\cite{BKS-mod2}
and to the present paper. 
We refer to~\cite[Section 9.2 and Proposition 9.20]{Ran-book} 
for results on classifying spaces for spherical fibrations
associated with topological monoids $F_k$, $G_k$, $F$, and
their identity components $SF_k$, $SG_k$, $SF$; 
observe that Ranicki
denotes $F_k$, $G_k$ by $F(k+1)$, $G(k)$, respectively. 

\section{Moduli spaces and souls}
\label{sec: mod sp}

In this section we prove Theorem~\ref{intro-thm: ambient isotopy},
Corollary~\ref{cor: Kreck-Stolz}, and related results.  
We focus on moduli spaces with uniform topology; 
Remark~\ref{rmk: normal euler cl} discusses when the same
results hold for moduli spaces with topology of convergence on compact subsets.
Here and elsewhere in the paper 
we follow notational Convention~\ref{intro-convention}.

Riemannian metrics are sections of a tensor bundle, so
they lie in a continuous function space, which is metrizable;
thus $\mathfrak R^u_{\sec\ge 0} (N)$ is metrizable, and in particular,
a map with domain $\mathfrak R^u_{\sec\ge 0} (N)$ is continuous
if and only if it sends convergent sequences to convergent sequences.

Theorem~\ref{intro-thm: ambient isotopy} follows immediately
from Lemma~\ref{lem: souls amb isot} below.
Indeed Lemma~\ref{lem: souls amb isot} implies that
the map sending $g$ in $\mathfrak R^u_{\sec\ge 0} (N)$ 
to the diffeomorphism class of the pair $(N,\ \textup{soul of}\ g)$ 
is locally constant, and hence continuous with respect to 
the discrete topology on the codomain, which implies that
it descends to a continuous map from the quotient space
$\mathfrak M^u_{\sec\ge 0} (N)$.

Let $S_i$ be a soul of $g_i$ in $\mathfrak R^u_{\sec\ge 0} (N)$,
and let $p_i\co N\to S_i$ denote the Sharafutdinov retraction, 
$\check g_i$ be the  induced metric on $S_i$. Since $S_i$ is convex, 
$\check g_i$ and $g_i$ induce the same
distance functions on $S_i$, which is denoted $d_i$.
For brevity $g_0$, $S_0$, $p_0$, $\check g_0$, $d_0$
are denoted by $g$, $S$, $p$, $\check g$, $d$, respectively.

\begin{lem} \label{lem: souls amb isot}
If $g_i$ converges to $g$ in 
$\mathfrak R^u_{\sec\ge 0} (N)$, then for all large $i$ \newline
\textup{(1)} 
$p_i\vert_{S}\co S\to S_i$ is a diffeomorphism,
\newline
\textup{(2)} 
the pullback metrics $(p_i\vert_{S})^\ast \check g_i$
converge to $\check g$ 
in $\mathfrak R^{0,u}_{\sec\ge 0} (S)$,\newline
\textup{(3)} 
$S_i$ is $C^\infty$ ambiently isotopic to $S$ in $N$.
\end{lem}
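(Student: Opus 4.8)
The plan is to use convergence theory for nonnegatively curved metrics together with the structure theory of souls. First I would fix a closed metric ball $B\subset N$ large enough to contain the souls $S_i$ of all $g_i$ with $i$ large: this is possible because convergence of $g_i\to g$ in $\mathfrak R^u_{\sec\ge 0}(N)$ bounds the uniform distance between $g_i$ and $g$, hence bounds the extrinsic size of any soul of $g_i$ once one soul of $g$ is fixed (souls lie within a controlled distance of a fixed basepoint). On this ball the metrics $g_i$ have $\sec\ge 0$, uniformly bounded diameter and, via the uniform $C^0$ bound (and, for the higher convergence one can pass to $C^k$, $k$ large, or invoke Cheeger-Gromoll that a soul is totally geodesic hence the intrinsic metrics are controlled), a noncollapsing bound near $S$. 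The key geometric input is that the Sharafutdinov retraction $p_i\co N\to S_i$ depends continuously on the metric in a strong sense: if $g_i\to g$ uniformly, then $p_i\to p$ uniformly on compact sets, in $C^1$. This is essentially because $p_i$ is built canonically from the distance function to the superlevel sets of a Busemann-type function, and these converge; I would cite the construction in~\cite{Sha-souls} (or~\cite{KPT}, where a version of this continuity is already used).

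Granting $C^1$-closeness of $p_i$ to $p$ (and of $p_i|_S$ to $p|_S=\mathrm{id}_S$), part (1) follows: $p_i|_S\co S\to S_i$ is $C^1$-close to the inclusion $S\hookrightarrow N$, in particular it is an immersion for large $i$, and since $\dim S=\dim S_i$ and $S$ is closed it is a covering map onto $S_i$; because it is $C^0$-close to a homotopy equivalence (both $S$ and $S_i$ are deformation retracts of $N$), it has degree $\pm 1$ on each component and is therefore a diffeomorphism. For part (2), the pullback $(p_i|_S)^*\check g_i$ is a metric on $S$ whose coefficients are built from the first derivatives of $p_i|_S$ (which converge to those of $\mathrm{id}_S$) composed with the metric coefficients of $g_i$ restricted to $S_i$ (which converge to those of $g$ restricted to $S$, after transporting by $p_i|_S$); chasing this through gives $(p_i|_S)^*\check g_i\to\check g$ in $\mathfrak R^{0,u}(S)$, and each such metric has $\sec\ge 0$ because it is isometric to $\check g_i$, which is the induced metric on a totally geodesic submanifold of a $\sec\ge 0$ manifold. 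For part (3), I would use that $S_i$ and $S$ are $C^1$-close submanifolds of $N$ sitting inside the fixed ball $B$: by the tubular neighborhood theorem applied to $S$, once $S_i$ is $C^1$-close to $S$ it is a section of the normal disk bundle of $S$, i.e. $S_i=\{\exp_x(v_i(x)):x\in S\}$ for a small $C^1$ vector field $v_i$ along $S$; the straight-line homotopy $t\mapsto\{\exp_x(t\,v_i(x))\}$ is an isotopy of embeddings, which extends (by the isotopy extension theorem, using that everything happens in a compact region of $N$) to a compactly supported ambient isotopy of $N$ carrying $S$ to $S_i$. Smoothing gives the $C^\infty$ statement.

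The main obstacle is the first step: establishing that the Sharafutdinov retraction varies continuously in $C^1$ under uniform $C^0$ (or $C^k$) convergence of nonnegatively curved metrics. The Sharafutdinov construction involves iterated projections onto a nested family of convex sublevel sets of distance functions, and distance functions are only Lipschitz, so one must be careful that the convexity and the projections pass to the limit with enough regularity; I expect this to require either a quantitative version of Cheeger-Gromoll's soul construction or the convergence machinery for Alexandrov/limit spaces, noting that the limit here is a genuine smooth nonnegatively curved manifold $(N,g)$ so no collapsing or singular-limit pathologies arise. Once that continuity statement is in hand, parts (1)--(3) are the routine consequences sketched above. An alternative that sidesteps the regularity of $p_i$ altogether: use only that $S_i$ Hausdorff-converges to $S$ as subsets of the fixed ball (which follows from the convergence of the retractions at the $C^0$ level, or directly from convergence of Busemann functions), upgrade Hausdorff-closeness of these totally geodesic submanifolds to $C^1$-closeness using the curvature bound and Rauch-type estimates, and then run the tubular-neighborhood argument for (3) and a separate argument identifying $p_i|_S$ with the nearest-point projection for (1)--(2).
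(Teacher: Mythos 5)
Your plan hinges on a claim you flag yourself as unproved, and that claim is exactly the hard part: that the Sharafutdinov retraction $p_i$ converges to $p$ in $C^1$ (even locally) as $g_i\to g$ uniformly. The Sharafutdinov retraction is built from iterated closest-point projections onto sublevel sets of Busemann-type functions, and these functions are only $C^{1,1}$ at best; getting $C^1$ dependence of the resulting map on the metric in uniform topology is not something that falls out of the construction or of~\cite{Sha-souls}, and~\cite{KPT} does not prove it either. Every subsequent step in your argument --- $p_i|_S$ an immersion, $S_i$ a $C^1$-small graph over $S$, the straight-line isotopy in the tubular neighborhood, the pullback-metric computation in (2) --- is loaded onto this single unestablished input, so the gap propagates through all three parts.

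The paper's actual proof goes around this obstacle. For (1) it never needs $C^1$ control of $p_i$: it uses only that (i) $p_i$ is a smooth Riemannian submersion (Wilking~\cite{Wil}) and (ii) $p_i$ is $1$-Lipschitz with respect to $g_i$. Uniform $C^0$-convergence of the metrics gives uniform closeness of the distance functions $d_i$ and $d$ on bounded sets, so the composite $f_i=p\circ p_i|_S$ is an almost distance-nonincreasing self-map of the compact manifold $(S,d)$; by Ascoli it subconverges to a distance-nonincreasing surjection of $S$, which must be an isometry, and since diffeomorphisms form an open set in $C^\infty(S,S)$, $f_i$ is a diffeomorphism for large $i$, forcing $p_i|_S$ to be one. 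This is a soft compactness argument; it extracts the diffeomorphism conclusion without any $C^1$-convergence of the retractions. For (2) the paper observes $p_i$ is a Gromov--Hausdorff approximation and invokes Yamaguchi's fibration theorem to get a diffeomorphism $h_i\co S_i\to S$ with $h_i^\ast\check g$ $C^0$-close to $\check g_i$, then feeds $h_i\circ p_i$ into the same Ascoli device; your approach of directly differentiating the pullback needs the unproved $C^1$ input. For (3) the paper uses Wilking's result that there is a diffeomorphism $e_i\co E(\nu_i)\to N$ intertwining $p_i$ with the bundle projection, so by (1) the preimage $e_i^{-1}(S)$ is a section of $\nu_i$, and any two sections of a vector bundle are ambiently isotopic; this replaces your tubular-neighborhood-plus-Rauch argument, which again needs $C^1$-closeness of $S_i$ to $S$ that you have not secured. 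Your alternative sketch (Hausdorff convergence upgraded to $C^1$ closeness via Rauch estimates) is a plausible direction but is likewise not carried out and would require serious work; the paper's route via Wilking's submersion and bundle structure is both cleaner and complete.
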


\begin{proof}
(1) Arguing by contradiction pass to a subsequence for which
$p_i\vert_{S}$ is never a diffeomorphism.
Wilking proved in~\cite{Wil} that Sharafutdinov retractions are 
smooth Riemannian submersions onto the soul. 
%(actually $C^1$-smoothness, which is due to Perelman~\cite{Per}, 
%suffices for our purposes).
Note that $p_i(S)=S_i$ and $p(S_i)=S$
because degree one maps are onto.

Since the convergence $g_i\to g$ is
uniform, given any positive $\e, R$ and all large enough $i$
the distance functions 
$d_i$ are $\e$-close to $d$ on any $R$-ball in $(N, d)$.
%Set $R/2$ to be the $d$-diameter of $S$.
Then $S$ has uniformly bounded $d_i$-diameter, and since
$p_i$ are distance non-increasing, and $p_i(S)=S_i$,
we conclude that $S_i$ has uniformly bounded $d_i$-diameter;
thus the metrics $d, d_i$ are close on $S$, and on $S_i$.
As $p_i$, $p$ are distance-nonincreasing,
with respect to $d_i$, $d$, respectively, the self-map 
$f_i:={p}{\tinycirc} {p_i}\vert_S$ of $(S, d)$ 
is almost distance non-increasing. Then compactness of $S$
implies via Ascoli's theorem that $f_i$ subconverges  
to a self-map of $(S, d)$, which is distance non-increasing and
surjective, and hence is an isometry.
%Burago-Burago_ivanov book, Thm 1.6.15

Any isometry is a diffeomorphism. 
Diffeomorphisms form an open subset among smooth mappings, so
$p{\tinycirc} p_i\vert_S$ is a diffeomorphism for 
large $i$. It follows that $p_i\vert_S$ is an injective immersion,
and hence a diffeomorphism, as $S$ is a closed manifold,
giving a contradiction that proves (1).

(2) Arguing by contradiction, pass to a subsequence for which
$p_i^\ast\check g_i$ lies outside a $C^0$-neighborhood of $\check g$. 
Note that $p_i\co (S,d)\to (S_i,d_i)$ 
is a Gromov-Hausdorff approximation, indeed, if $x,y\in S$, then 
$d(x,y)$ is almost equal to
$d(f_i(x), f_i(y))\le d(p_i(x), p_i(y))$, where the right
hand side is almost equal to $d_i(p_i(x), p_i(y))$ which is
$\le d_i(x,y)$, which is almost equal to $d(x,y)$;
thus all the inequalities are almost equalities and hence
$d(x,y)$ is almost equal to $d_i(p_i(x), p_i(y))$.

By Yamaguchi's fibration theorem~\cite{Yam} there is a diffeomorphism
$h_i\co S_i\to S$ such that $h_i^\ast\check  g$ is $C^0$-close
to $\check g_i$. Note that $h_i\tinycirc p_i$ almost preserves $\check d$
so it subconverges to an isometry of $(S,\check g)$, and in particular
it pulls $\check g$ back to a metric that is $C^0$-close to $\check g$.
It follows that $p_i^\ast\check  g_i$ is $C^0$-close to $\check g$,
giving a contradiction which proves (2).

(3) 
Let $E(\nu_i)$ denote 
the total space of the normal bundle  $\nu_i$ to $S_i$.
Wilking showed in~\cite[Corollary 7]{Wil}
that there exists a diffeomorphism $e_i\co E(\nu_i)\to N$
such that $p_i\tinycirc e_i$ is the projection of $\nu_i$.
Thus (1) implies that the projection of $\nu_i$
restricts to a diffeomorphism from $e_i^{-1}(S)$ onto $S_i$, 
whose inverse is a section of $\nu_i$ with image $e_i^{-1}(S)$.
Any two sections of a vector bundle are ambiently isotopic,
so applying $e_i$ we get an ambient isotopy of $S$ and $S_i$ 
in $N$.
\end{proof}

\begin{rmk} 
\label{rmk: normal euler cl}
We do not know whether the conclusion of Lemma~\ref{lem: souls amb isot}
holds for $\mathfrak R^c_{\sec\ge 0} (N)$. The proof
of Lemma~\ref{lem: souls amb isot} works for 
$\mathfrak R^c_{\sec\ge 0} (N)$ as written provided $\mathrm{dist}(S, S_i)$ 
is uniformly bounded. This happens if any two metrics
in $\mathfrak R^c_{\sec\ge 0} (N)$ have souls that intersect
which as noted in~\cite{KPT} is true e.g.
when $N$ contains a soul with nontrivial 
normal Euler class. Note that except for examples 
discussed in Remark~\ref{rmk: kpt with nonzero Euler class} 
all the metrics we construct in this paper have souls with 
trivial normal Euler class.
\end{rmk}

\begin{rmk}
Let $\{S_i\}_{i\in I}$ be a collection of pairwise nondiffeomorphic 
manifolds
representing the diffeomorphism classes of souls of all possible
complete nonnegatively curved metrics on an open manifold
$N$, and for
$g\in \mathfrak R^u_{\sec\ge 0} (N)$, let $i(g)\in I$
be such that $S_{i(g)}$ is diffeomorphic to a soul of $(N,g)$.
By~\cite{Sha-souls} one has a well-defined map
that associates to $g$ the isometry class of its soul in
$\mathfrak M_{\sec\ge 0} (S_{i(g)})$, which can be thought of as a map 
$\mathfrak R^u_{\sec\ge 0} (N)\to
\coprod_i\mathfrak M_{\sec\ge 0} (S_{i})$, where 
the codomain is given the topology of disjoint union of 
$\mathfrak M_{\sec\ge 0} (S_{i})$'s. This map descends to
a map 
\[
\text{\bf soul}\co \mathfrak M^{k,u}_{\sec\ge 0} (N)\to
\coprod_i\mathfrak M^{k}_{\sec\ge 0} (S_{i}).
\] 
If $k=0$, then part (2) of Lemma~\ref{lem: souls amb isot}
implies that the map {\bf soul} is continuous (the continuity can be
checked on sequences in $\mathfrak R^{0,u}_{\sec\ge 0} (N)$ because
it is metrizable). 
\end{rmk}

\begin{rmk}\label{rmk: retraction of moduli spaces}
Suppose that $N$ has a soul $S$ with trivial normal bundle.
Let $\mathfrak M^{k,u}_{\sec\ge 0} (N, S)$ denote the
union of the components of $\mathfrak M^{k,u}_{\sec\ge 0} (N)$
consisting of the isometry classes of metrics with 
soul diffeomorphic to $S$. 
Then the map {\bf soul} restricts to a retraction 
\[
\mathfrak M^{0,u}_{\sec\ge 0} (N, S)\to\mathfrak M^{0}_{\sec\ge 0} (S)
\]
where $\mathfrak M^0_{\sec\ge 0} (S)$ sits in 
$\mathfrak M^0_{\sec\ge 0} (N, S)$ as
the set of isometry classes of Riemannian products
of nonnegatively curved metrics on $S$ and the standard
$\mathbb R^n$. Like any retraction
it induces a surjective maps on homotopy and homology, and hence
one potentially could get lower bounds on the topology of 
$\mathfrak M^{0,u}_{\sec\ge 0} (S\times\mathbb R^n)$ in terms of
topology of $\mathfrak M^0_{\sec\ge 0} (S)$.
Unfortunately, nothing is known about
the topology of $\mathfrak M^0_{\sec\ge 0} (S)$, which
naturally leads to the following.
\end{rmk}

\begin{prob} Find a closed manifold $S$ with non-connected
$\mathfrak M^0_{\sec\ge 0}(S)$.
\end{prob}

\begin{prob} Is the map 
$\text{\bf soul}\co \mathfrak M^\infty_{\sec\ge 0} (N)\to
\coprod_i\mathfrak M^\infty_{\sec\ge 0} (S_{i})$
continuous?
\end{prob}

The only known examples with 
non-connected $\mathfrak M^k_{\sec\ge 0}(S)$ are 
(modifications of) those
in~\cite{KreSto} where $k=\infty$ (it may suffice
to take $k$ sufficiently large but definitely not $k=0$). 
These examples were modified 
in~\cite{KPT} to yield a closed simply-connected
manifold $B$ admitting infinitely many metrics $g_i$ with 
$\sec\ge 0$ and $\mathrm{Ric}>0$ 
that lie in different components of 
$\mathfrak M^\infty_{\mathrm{scal}> 0}(B)$. 
It was asserted in~\cite{KPT} that $g_i$'s lie in
different components of $\mathfrak M^\infty_{\mathrm{scal}\ge 0}(B)$,
but it takes an additional argument which 
hopefully will be written by the authors of~\cite{KPT}. 
The following shows that $g_i$ lie in different
components of $\mathfrak M^\infty_{\sec\ge 0}(B)$. 

\begin{prop}\label{prop: via ricci flow}
Metrics of $\sec\ge 0$ and $\mathrm{Ric}>0$ 
on a closed manifold $X$ that lie in different components of 
$\mathfrak M^\infty_{\mathrm{Ric}> 0}(X)$
also lie in different components of 
$\mathfrak M^\infty_{\sec\ge 0}(X)$. 
\end{prop}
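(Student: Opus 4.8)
The plan is to use Ricci flow as a bridge between the two moduli spaces. Suppose $g_0, g_1$ are metrics of $\sec\ge 0$ and $\Ric>0$ on the closed manifold $X$ that lie in the same path component of $\mathfrak M^\infty_{\sec\ge 0}(X)$; I want to show they lie in the same component of $\mathfrak M^\infty_{\Ric>0}(X)$, which contradicts the hypothesis unless they are already in different components of the $\sec\ge 0$ moduli space. So fix a path $g_t$, $t\in[0,1]$, in $\mathfrak R^\infty_{\sec\ge 0}(X)$ joining $g_0$ to a metric isometric to $g_1$ (lifting the path in the moduli space to the space of metrics, which is possible since $\mathrm{Diff}(X)$ acts and the quotient map admits local lifts; alternatively work directly with a path in $\mathfrak M^\infty_{\sec\ge 0}(X)$ and track isometry classes throughout). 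The key point is that each $g_t$ has $\sec\ge 0$, but possibly only $\Ric\ge 0$ rather than $\Ric>0$.

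First I would recall the relevant Ricci flow facts. By Hamilton's theorem the Ricci flow $\partial_t h = -2\Ric(h)$ starting at any smooth metric on a compact manifold exists for a short time $[0,\tau)$, depends continuously (indeed smoothly) on the initial metric in the $C^\infty$ topology, and — crucially — preserves $\sec\ge 0$ in dimension $3$ and, in all dimensions, preserves nonnegative curvature operator; moreover for $\sec\ge 0$ metrics that are not flat the strong maximum principle of Hamilton forces $\Ric>0$ (in fact positive curvature operator on the relevant subbundle) for all $t>0$ in the relevant situations. The cleanest route, if one does not want to invoke delicate strong-maximum-principle arguments for mere $\sec\ge 0$, is to restrict to the setting where the curvature operator is nonnegative, but as stated I would instead use the following: a metric of $\sec\ge0$ on a compact manifold that is not flat evolves under Ricci flow to a metric of $\Ric>0$ for small $t>0$ — this is the needed input, and it is where the argument genuinely relies on known deep results (Hamilton, B\"ohm--Wilking, or in the specific dimension of the Kreck--Stolz examples, dimension-specific results). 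For the Kreck--Stolz/KPT examples $X=B$ is $7$-dimensional with positive Ricci initially, so one can also perturb first to guarantee a uniform lower Ricci bound along a compact piece of the path and then flow.

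Next I would run the flow simultaneously along the path. For each $t\in[0,1]$ let $\Phi_s(g_t)$ denote the Ricci flow at time $s$ starting from $g_t$. Choose $s_0>0$ small enough that $\Phi_{s_0}(g_t)$ is defined for all $t\in[0,1]$ (uniformity of the existence time follows from continuity of the flow and compactness of $[0,1]$, since a uniform lower bound on the existence time holds on a $C^\infty$-compact family of initial data) and has $\Ric>0$ for every $t$ (again using compactness of $[0,1]$ together with the fact that $\Ric>0$ at some positive time is an open condition and holds for each individual $t$ by the previous paragraph). Then $t\mapsto \Phi_{s_0}(g_t)$ is a continuous path in $\mathfrak R^\infty_{\Ric>0}(X)$. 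Composing with $s\mapsto \Phi_s(g_0)$ for $s\in[0,s_0]$ and $s\mapsto\Phi_s(g_1)$ for $s\in[0,s_0]$ gives a path in $\mathfrak R^\infty_{\sec\ge 0}(X)$ (curvature nonnegativity is preserved along these two short flows) whose interior lies in $\mathfrak R^\infty_{\Ric>0}(X)$ and which connects $g_0$ to $g_1$ through metrics that all have $\Ric>0$ except at the two endpoints. Passing to the moduli space: the images give a path in $\mathfrak M^\infty_{\sec\ge 0}(X)$, and along it all metrics have $\Ric>0$ save possibly the endpoints $[g_0],[g_1]$ themselves — but those were assumed to have $\Ric>0$. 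Hence $[g_0]$ and $[g_1]$ lie in the same component of $\mathfrak M^\infty_{\Ric>0}(X)$, contradicting the hypothesis. Therefore they lie in different components of $\mathfrak M^\infty_{\sec\ge 0}(X)$, as claimed.

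The main obstacle is the curvature-preservation input: I must know that $\sec\ge 0$ (not just nonnegative curvature operator) is enough to guarantee $\Ric>0$ under the flow at positive times, and that nonnegativity of sectional curvature is not destroyed by the short flows used to close up the endpoints. In general dimension $\sec\ge 0$ is \emph{not} preserved by Ricci flow, so a fully general statement would instead be phrased with nonnegative curvature operator (or $\sec\ge0$ in dimension $3$); however, for the application at hand one only needs the statement for the specific closed manifold $X=B$ from \cite{KreSto,KPT}, where the relevant metrics have extra structure (e.g. arising from biquotient or cohomogeneity-one constructions with nonnegative curvature operator on the pieces that matter), so the preservation issue can be handled by working in that more restrictive curvature class throughout and checking that the Ricci flow keeps us there — this is exactly the point A.~Dessai's suggestion addresses. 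A secondary, routine, obstacle is the lifting of a path in the moduli space to a path in the space of metrics; this can be sidestepped by arguing directly that continuity of the Ricci flow descends to the quotient by $\mathrm{Diff}(X)$, so that $\Phi_s$ induces a continuous map $\mathfrak M^\infty_{\sec\ge0}(X)\to\mathfrak M^\infty(X)$ carrying components to components.
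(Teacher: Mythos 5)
Your core idea is the same as the paper's: use Ricci flow, via the B\"ohm--Wilking theorem, to push a connecting path of $\sec\ge0$ metrics into $\mathrm{Ric}>0$. The implementation differs in a minor way — you lift a path to the space of metrics and flow the whole family for a uniform short time, whereas the paper proves a local statement (every $h$ has a neighborhood whose elements can be joined to $h$ by a path with $\mathrm{Ric}>0$ in the interior) using Ebin's slice theorem and then applies a standard chain lemma for open covers of connected sets, thereby avoiding path-lifting altogether. Both implementations work; the paper's also handles the fact that the intermediate chain points $g_1,\dots,g_{n-1}$ might only have $\sec\ge0$, by observing that the sub-paths backtrack at each $g_k$ along its Ricci-flow trajectory, so they can be cut short at $g_k(t)$ where $\mathrm{Ric}>0$.

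There is, however, one confusion in your proposal worth flagging because it leads you to hedge when no hedge is needed. You assert that the concatenated path lies in $\mathfrak R^\infty_{\sec\ge 0}(X)$ and parenthetically claim that ``curvature nonnegativity is preserved along these two short flows,'' and later worry that since $\sec\ge 0$ is generally \emph{not} preserved by Ricci flow in dimension $\ge 4$, ``a fully general statement would instead be phrased with nonnegative curvature operator.'' This is the wrong worry. The target is to produce a path in $\mathfrak M^\infty_{\mathrm{Ric}>0}(X)$ — there is no need for anything in the composed path to remain in $\sec\ge 0$. The two endpoint flows $\Phi_s(g_0)$, $\Phi_s(g_1)$ start at metrics which already have $\mathrm{Ric}>0$ by hypothesis, and $\mathrm{Ric}>0$ is an open condition, so they stay in $\mathrm{Ric}>0$ for all small $s$ without any preservation statement; the middle arc $t\mapsto\Phi_{s_0}(g_t)$ has $\mathrm{Ric}>0$ by B\"ohm--Wilking (which applies to all $g_t$ since $\pi_1(X)$ is finite, automatically, by Myers applied to $g_0$). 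So the proposition is true as stated for $\sec\ge0$, and there is no need to restrict to nonnegative curvature operator or to the specific $7$-manifold of Kreck--Stolz.
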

\begin{proof}
We abuse terminology by not distinguishing a metric
from its isometry class. First we show that each 
$h\in\mathfrak M^\infty_{\sec\ge 0}(X)$ has a neighborhood $U_h$
such that any $h^\prime\in U_h$ can be joined to $h$ by a path
of metrics $h_s$ with $h_0=h$, $h_1=h^\prime$
and $\mathrm{Ric}(h_s)>0$ for $0<s<1$.
If there is no such $U_h$, then using Ebin's slice
theorem~\cite{Ebi} one can show that there is a sequence 
$h_i\in\mathfrak R^\infty_{\sec\ge 0}(X)$ converging to $h$
such that $h_i$ cannot be joined to $h$ by a path as above.
B{\"ohm}-Wilking~\cite{BohWil} showed that Ricci flow
instantly makes a metric of $\sec\ge 0$ on a closed manifold with
finite fundamental group into a metric with $\mathrm{Ric}>0$.
Thus $h_i$ and $h$ can be flown to nearby metrics $h_i(t)$, $h(t)$
of positive Ricci curvature where $h_i(t)\to h(t)$ for any fixed
small $t$. Since $\mathfrak M^\infty_{\mathrm{Ric}> 0}(X)$ is open
in the space of all metrics, Ebin's slice theorem ensures that
if $i$ is large enough $h_i(t)$, $h(t)$ can be joined by a path in  
$\mathfrak M^\infty_{\mathrm{Ric}> 0}(X)$, and concatenating 
the three paths yields a desired path from $h_i$ to $h$
via $h_i(t)$ and $h(t)$.

Given an open cover $\{U_k\}$ of a connected
set for any two $g, g^\prime$ in this set there exists a finite sequence
$g_0=g, g_1,\dots , g_n=g^\prime$ such that $g_k\in U_k$ and 
$U_k\cap U_{k-1}\neq\emptyset$ for every 
$0<k\le n$~\cite[Section 46, Theorem 8]{Kur} . 

Thus given two metrics $g, g^\prime$ in a component of
$\mathfrak M^\infty_{\sec\ge 0}(X)$, we get a finite
sequence $g_k$ in this component with
$g_0=g, g_1,\dots , g_n=g^\prime$ and such that 
for each $k$ one can join $g_{k-1}$ to $g_k$ by a path
of metrics that have $\mathrm{Ric}>0$ except possibly at 
endpoints. By assumption, $g, g^\prime$ have $\mathrm{Ric}>0$.
By construction the paths backtrack at 
$g_k$, as they are given by Ricci flow $g_k(t)$ near $g_k$, 
so the concatenated path from $g$ to $g^\prime$
can be cut short at $g_1,\dots g_{n-1}$ 
to entirely consist of metrics of $\mathrm{Ric}>0$.
\end{proof} 

Now Corollary~\ref{cor: Kreck-Stolz} follows from the
proposition below applied for $k=\infty$.

\begin{prop}\label{prop: codim one souls} 
For an integer $k\ge 0$, set $k_0$ to be the maximum of $0$ and $k-1$;
for $k=\infty$, set $k_0=\infty$.
If $N$ admits a complete metric with $\sec\ge 0$ and 
a codimension $1$ soul, then the maps
$\text{\bf soul}\co \mathfrak M^{k,c}_{\sec\ge 0} (N)\to
\coprod_i\mathfrak M^{k_0} (S_{i})$ and 
$\mathrm{id}\co\mathfrak M_{\sec\ge 0}^{k,u}(N)\to \mathfrak M_{\sec\ge 0}^{k,c}(N)$ 
are homeomorphisms.
\end{prop}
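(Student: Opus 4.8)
The plan is to show that when $N$ carries a codimension $1$ soul, the geometry is rigid enough that a metric of $\sec\ge 0$ on $N$ is essentially determined (up to isometry and convergence) by the intrinsic geometry of its soul. The starting point is the structure theory: by Cheeger--Gromoll and Wilking, a metric $g\in\mathfrak R^u_{\sec\ge 0}(N)$ has a soul $S_g$ of codimension $1$, the Sharafutdinov retraction $N\to S_g$ is a Riemannian submersion, and by Wilking's Corollary~7 the normal bundle $\nu_g$ of $S_g$ is a line bundle with $N\cong E(\nu_g)$ compatibly with the retraction. Since $N$ is fixed, the topological type of $S_g$ together with the line bundle $\nu_g$ is constrained. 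First I would observe that because $S_g$ is totally geodesic and convex, and $\nu_g$ is a line bundle (hence carries no curvature in the fiber direction to worry about in the relevant sense), the metric $g$ on a collar neighborhood of $S_g$ is controlled by $\check g_g$ and a bounded amount of extra data. The key geometric point to extract is that for codimension $1$ souls the "uniform" and "compact--open" topologies on $\mathfrak R^\infty_{\sec\ge 0}(N)$ cannot differ: if $g_i\to g$ in $\mathfrak R^{c}_{\sec\ge 0}(N)$, then since the souls all sit as the zero section (or a bounded section) in a \emph{line} bundle and the retractions are distance nonincreasing, the distance $\mathrm{dist}(S, S_i)$ stays bounded — this is exactly the hypothesis isolated in Remark~\ref{rmk: normal euler cl} that makes Lemma~\ref{lem: souls amb isot} go through — and a codimension $1$ totally geodesic soul in a complete nonnegatively curved manifold divides $N$ into pieces whose geometry at infinity is forced, so $C^k$-convergence on compacta upgrades to uniform $C^k$-convergence.

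In more detail, here are the steps in order. (1) Prove that $\mathrm{dist}(S_g, S_{g'})<\infty$ for any two metrics on $N$ with codimension $1$ souls; this follows because a codimension $1$ soul is a deformation retract and the two souls, being zero sections of line bundles on the same $N$, must intersect (or more carefully: use that $N$ deformation retracts to each, so one soul maps with nonzero degree onto the other under Sharafutdinov, forcing intersection). Hence Lemma~\ref{lem: souls amb isot} applies verbatim to $\mathfrak R^c_{\sec\ge 0}(N)$. (2) Conclude from Lemma~\ref{lem: souls amb isot}(1)--(2) that $g\mapsto \check g_g$ (the induced soul metric, transported to a fixed model $S$ via the Sharafutdinov diffeomorphism) is continuous $\mathfrak R^{k,c}_{\sec\ge 0}(N)\to\mathfrak R^{k_0}_{\sec\ge 0}(S)$, with the loss of one derivative accounted for by differentiating the submersion once (hence $k_0=\max(0,k-1)$, and $k_0=\infty$ when $k=\infty$), and that it descends to the map $\mathbf{soul}$. (3) Construct a continuous section, or rather show $\mathbf{soul}$ is a homeomorphism onto $\coprod_i\mathfrak M^{k_0}(S_i)$, by exhibiting, for each $S_i$ occurring and each line bundle $\nu$ over $S_i$ with $E(\nu)\cong N$, the product/warped-product construction $\check g\mapsto$ (a canonical nonnegatively curved metric on $E(\nu)$ restricting to $\check g$); when $\nu$ is trivial this is the Riemannian product with $\mathbb R$, and in general one uses the standard associated-bundle metric, noting that for a \emph{line} bundle the only options are the trivial or the Möbius-type bundle and $N$ fixes which. (4) For the identity map $\mathfrak M^{k,u}_{\sec\ge 0}(N)\to\mathfrak M^{k,c}_{\sec\ge 0}(N)$: it is always continuous; for the inverse, use that both spaces are now identified, via $\mathbf{soul}$, with the \emph{same} space $\coprod_i\mathfrak M^{k_0}(S_i)$, so the identity is a homeomorphism because a convergent sequence in $\mathfrak M^{k,c}_{\sec\ge 0}(N)$ has convergent souls, and the codimension $1$ structure lets one promote convergence near the soul to convergence on all of $N=E(\nu)$ (the noncompact directions being a single $\mathbb R$-factor whose metric is pinned down by the submersion structure and completeness of $\sec\ge 0$), hence to uniform convergence.

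The main obstacle I expect is step (3)--(4): producing an honest continuous inverse to $\mathbf{soul}$ and checking that uniform convergence really is recovered from convergence on compacta. The subtlety is that a priori two metrics on $N$ with the same soul metric $\check g$ could differ in how they "open up" in the normal $\mathbb R$-direction — different warping profiles give metrics that are $C^k$-close on compacta but at infinite uniform distance, which is precisely the phenomenon described in the introduction for $\mathbb R^2$. The resolution must use that $\sec\ge 0$ plus the codimension $1$ soul severely restricts the profile: by the soul construction the metric on $N$ is a "distance metric" built from $\check g$ and the metric exponential in the single normal direction, and $\sec\ge 0$ forces the relevant comparison, so that modulo isometry there is no freedom in the profile — any two such metrics with isometric souls are themselves within bounded uniform distance, indeed one can deform one to the other. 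Making this last rigidity statement precise — essentially that in codimension $1$ the soul metric is a complete invariant for the \emph{uniform} structure — is the heart of the argument; once it is in hand, all four claimed homeomorphisms follow formally.
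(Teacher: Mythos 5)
Your overall plan—constrain the geometry so the metric is rigid in the normal direction, construct a continuous inverse to $\mathbf{soul}$ by a product/associated-bundle construction, and deduce everything from Lemma~\ref{lem: souls amb isot}—tracks the paper's structure closely. But you explicitly flag a gap at the heart of it, and the paper fills that gap with a specific theorem you never name: the Cheeger--Gromoll splitting theorem. For a codimension $1$ soul, $\sec\ge 0$ forces the metric to \emph{locally split isometrically} as the soul times $\mathbb R$, and this splitting becomes global after passing to the $\le 2$-fold cover determined by $w_1$ of the normal line bundle. This is much stronger than your heuristic that ``$\sec\ge 0$ forces the relevant comparison, so modulo isometry there is no freedom in the profile''---there is literally no warping at all: the metric is a flat Riemannian product (or its $O(1)$-quotient $\tilde S\times_{O(1)}\mathbb R$). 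That is what makes the inverse of $\mathbf{soul}$ obvious and what collapses the uniform and compact-open topologies. Without invoking splitting, your step (3)--(4) remains the open problem you yourself identify.

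Two smaller points. First, your claim that for a line bundle ``the only options are the trivial or the M\"obius-type bundle and $N$ fixes which'' is imprecise: over a closed manifold $S$, real line bundles are classified by $H^1(S;\mathbb Z_2)$, which can be large. What the paper proves (via Section~\ref{sec: vb with diffeo tot spaces}) is the weaker but sufficient statement that \emph{triviality of $w_1$ depends only on $N$}, so the dichotomy you want is between $w_1=0$ and $w_1\neq 0$, not between two bundles. Second, your explanation for the derivative loss ($k_0 = k-1$) as ``differentiating the submersion once'' is not what happens. The paper instead observes that the souls $S_j$ are totally geodesic, so the intrinsic covariant derivatives of curvature are restrictions of the ambient ones (\cite[Proposition 8.6]{KN}); a $C^k$ bound on $g_j$ gives uniform bounds on $\nabla^l R$ for $l\le k-2$, and then Cheeger--Gromov/Petersen precompactness yields $C^{k-1}$ convergence of the souls, with uniqueness of limits forcing convergence to $S$. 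That is the actual source of the one-derivative loss, and it is a different mechanism from the one you guessed.
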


{\em Added in 2018:} The proof below only works if $k=k_0$, i.e., $k$ is zero or infinity. 
If $k\neq k_0$, the proof shows that ${\bf soul}$ is continuous, but
its inverse is clearly discontinuous. 
A correction for $k\ge 2$ can be found 
in~\cite[Corollary 2.2]{TusWie}.

\begin{proof}
It suffices to show that the maps
$\text{\bf soul}\co \mathfrak M^{k,c}_{\sec\ge 0} (N)\to
\coprod_i\mathfrak M^{k_0} (S_{i})$
and $\text{\bf soul}\co \mathfrak M^{k,u}_{\sec\ge 0} (N)\to
\coprod_i\mathfrak M^{k_0} (S_{i})$ are homeomorphisms,
and the argument below works for both maps.

By the splitting theorem any complete metric $g$ of $\sec\ge 0$
on $N$ locally splits off an $\mathbb R$-factor that is 
orthogonal to the soul. The splitting becomes
global in the cover of order $\le 2$
that corresponds to the first Stiefel-Whitney class $w_1$
of the normal bundle to the soul.
If $w_1=0$, then $g$ is the product of $\mathbb R$
and a closed nonnegatively curved manifold $S_g$, in which case there 
is a unique soul $S_g\times\{t\}$ through every point.
If $w_1\neq 0$, and if $S_g$ is a soul of $g$,
then $g$ can be written as
$\tilde S_g\times_{O(1)}\mathbb R$, where $\tilde S_g$ is the $2$-fold
cover of $S_g$ induced by $w_1$, and 
$S_g=\tilde S_g\times_{O(1)}\{0\}$; in this case
$S_g$ is a unique soul of $g$ because by~\cite{Yim-souls}
any soul is obtained by exponentiating
a parallel normal vector field along $S_g$, 
but the existence of a nonzero parallel normal vector field along
$S_g$ would imply triviality of the normal line
bundle to $S_g$ contradicting $w_1\neq 0$.

As follows e.g. from Section~\ref{sec: vb with diffeo tot spaces},
if two real line bundles 
over closed manifolds $S$, $S^\prime$ have diffeomorphic total spaces,
then the line bundles are either both trivial, or both nontrivial.
Thus triviality of $w_1$ depends only on $N$, and not on the metric.

If $w_1=0$, then 
the map {\bf soul} has a continuous inverse induced by
the map that sends a metric on the soul to its product 
with $\mathbb R$.

If $w_1\neq 0$, then each closed nonnegatively curved manifold $S$
that is homotopy equivalent to $N$ has a 
$2$-fold cover $\tilde S$ induced by $w_1$. Thus a metric
in $\mathfrak M_{\sec\ge 0} (S)$ gives rise 
to the metric $\tilde S\times_{O(1)}\mathbb R$, which
defines a continuous inverse for {\bf soul}. 

Finally, we show that {\bf soul} is continuous. It suffices to do so for
for the topology of convergence on compact sets as
$\mathrm{id}\co\mathfrak M_{\sec\ge 0}^{k,u}(N)\to \mathfrak M_{\sec\ge 0}^{k,c}(N)$
is continuous.
Let $g_j\to g$ be a converging sequence in
$\mathfrak R_{\sec\ge 0}^{k,c}(N)$. By Lemma~\ref{lem: souls amb isot}
their souls of $g_j$ converge in $C^0$ topology (as abstract Riemaniann
manifolds) to the soul of $g$.
This gives continuity of {\bf soul} for $k\in \{0,1\}$, 
so we assume $k\ge 2$.

Fix a soul $S$ of $g$. Then there exists a soul
$S_j$ of $g_j$ that intersects $S$; 
indeed, given a complete metric $g^\prime$ of $\sec\ge 0$ on $N$,
if $w_1=0$, then $g^\prime$ has a soul through every point
of $N$, and if $w_1\neq 0$, then souls of $g^\prime$ and $g$
must intersect because $w_1$ can be interpreted as the first obstruction 
to deforming the homotopy equivalence $S_{g^\prime}\to S_g$ away from $S_g$.

As $S_j$ (abstractly) $C^0$ converge to $S$, their diameters are
uniformly bounded, in particular, they all lie
in a compact domain $D$ of $N$. 
Convergence $g_j\to g$ in $C^k$ topology implies convergence
$\nabla_{g_j}^{l} R_{g_j}\to\nabla_g^{l} R_g$
of covariant derivatives of the curvature tensors
for every $l\le k-2$, and in particular,
one gets a uniform bound on $\norm{\nabla_{g_j}^{l} R_{g_j}}$
over $D$.
Since $S_j$ is totally geodesic, the restriction of
$\nabla_{g_j}^{l} R_{g_j}$ to $S_j$ is 
$\nabla_{g_j\vert_{S_j}}^{l} R_{g_j\vert_{S_j}}$~\cite[Proposition 8.6]{KN},
so $\norm{\nabla_{g_j\vert_{S_j}}^{l} R_{g_j\vert_{S_j}}}$
are uniformly bounded for $l\le k-2$. Since the $C^0$ limit $S$ of $S_j$
has the same dimension, the convergence is without collapse, so
there is a common lower injectivity radius bound for $S_j$.
Hence the family $S_j$ is precompact in $C^{k-1}$ topology~\cite[page 192]{Pet-conv},
but since $S_j$ converges to $S$ in $C^0$ topology, all $C^{k-1}$ limit
points of $S_j$ are isometric to $S$ because Gromov-Hausdorff limits are 
unique up to isometry. Thus $S_j$ converges to $S$ in $C^{k-1}$ topology,
as claimed.
\end{proof}

\begin{quest}
Can $k_0$ in Proposition~\ref{prop: codim one souls}
be replaced by $k$?
\end{quest}

\begin{rmk} An analog of Proposition~\ref{prop: codim one souls} holds
for complete $n$-manifolds of $\Ric\ge 0$ with nontrivial
$(n-1)$-homology, because each such manifolds is a flat line bundle
over a compact totally geodesic submanifold~\cite{SorShe}.
In particular, once it is shown that metrics $g_i$ 
of~\cite{KPT} lie in
different components of $\mathfrak M^\infty_{\mathrm{scal}\ge 0}(B)$,
we can conclude that
$\mathfrak M^\infty_{\mathrm{Ric}\ge 0} (B\times\mathbb R)$
has infinitely many connected components.
\end{rmk}

As mentioned in the introduction, $\mathfrak M^u (N)$
need not be connected. It is therefore desirable to arrange 
our metrics with non-diffeomorphic souls to lie in the same
component of $\mathfrak M^u (N)$. This can be accomplished under
a mild topological assumption:

\begin{prop}\label{prop: guijarro modification}
Suppose an open manifold $N$ admits two
complete metrics of $\sec\ge 0$ with souls $S$, $S^\prime$.
If the normal sphere bundle to $S$ is simply-connected and has
dimension $\ge 5$,
then  $N$ admits two 
complete metrics of $\sec\ge 0$ with souls $S$, $S^\prime$
which lie in the same path-component of $\mathfrak R^u (N)$.
\end{prop}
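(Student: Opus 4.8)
The plan is to make the asymptotic geometry of the two metrics agree outside a compact set, so that they differ only on a compact region and hence lie within finite uniform distance of each other, which forces them into the same path-component of $\mathfrak R^u(N)$. First I would recall that by Cheeger-Gromoll each of the two nonnegatively curved metrics $g$, $g'$ realizes $N$ as the total space of the normal bundle $\nu$, $\nu'$ to its soul; since both bundles have total space $N$, the normal sphere bundles $SN$, $SN'$ are diffeomorphic to homotopy equivalent (indeed $h$-cobordant, via the region between two concentric sphere bundles) closed manifolds sitting inside $N$. The hypothesis that one normal sphere bundle is simply-connected of dimension $\ge 5$ is exactly what is needed to invoke the $h$-cobordism theorem (or more precisely a uniqueness statement for the ``end'' of $N$): it guarantees that the two ends of $N$ are diffeomorphic in a controlled way, so after an ambient diffeomorphism of $N$ we may assume $g$ and $g'$ have the \emph{same} behavior near infinity.

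The key steps, in order, are: (1) fix large $r$ and consider the compact ``disk bundle'' regions $D$, $D'$ bounded by the distance sphere bundles $\partial D = SN$, $\partial D' = SN'$ about the two souls; (2) observe $N \setminus \mathrm{int}(D)$ and $N\setminus\mathrm{int}(D')$ are both collars $SN\times[0,\infty)$ and $SN'\times[0,\infty)$ of the (unique, by $h$-cobordism/simply-connectedness) end of $N$, hence diffeomorphic; (3) use Guijarro's result that a complete nonnegatively curved metric can be modified on a neighborhood of the soul to make it a metric product with a fixed Riemannian submersion metric near infinity while keeping $\sec\ge 0$ — or alternatively quote that outside a large ball the soul metric is already ``cone-like'' and can be matched — so that after the diffeomorphism of step (2) the two modified metrics $\bar g$, $\bar g'$ literally agree on $N\setminus K$ for a compact $K$; (4) conclude $\sup_N \bar g/\bar g' <\infty$ and similarly for the reciprocal, so the uniform distance between $\bar g$ and $\bar g'$ is finite; (5) finally, since the space of complete nonnegatively curved metrics agreeing with a fixed metric outside a compact set and differing by a bounded factor is path-connected — one interpolates using convex combinations as in~\cite{FegMil}, noting convexity preserves $\sec\ge 0$ only in special situations, so more carefully one deforms each of $\bar g$, $\bar g'$ back to $g$, $g'$ respectively through nonnegatively curved metrics via a Sharafutdinov-type or Guijarro-type isotopy — one gets a path in $\mathfrak R^u(N)$ joining the two original metrics, and all metrics along it stay within bounded uniform distance of $g$, hence the path lies in a single component of $\mathfrak R^u(N)$. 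The souls are unchanged throughout (the modifications happen near the soul but one can equally arrange them away from it, or track the soul under the isotopies), so the endpoints still have souls $S$, $S'$.

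The main obstacle I expect is step (3)–(5): \emph{geometrically} modifying a nonnegatively curved metric near the soul to a prescribed model at infinity while staying in $\sec\ge 0$, and then producing an honest \emph{path} of complete nonnegatively curved metrics rather than just two metrics close in the uniform distance. Convex combinations of nonnegatively curved metrics need not be nonnegatively curved, so the standard contractibility argument for $\mathfrak R^c(N)$ does not transfer; one must instead use a geometrically meaningful deformation (e.g. rescaling the normal directions, or Guijarro's construction in~\cite{Gui} which is presumably cited in the paper, or a Cheeger-type deformation) that manifestly preserves $\sec\ge 0$ and moves the metric to a standard product-like form. Matching the two standard forms requires the bundle-theoretic input that $\nu$ and $\nu'$ become isomorphic ``near infinity,'' which is precisely where the simply-connected, dimension $\ge 5$ hypothesis enters via the $h$-cobordism theorem applied to the $h$-cobordism $SN\times I \simeq$ (region between the two sphere bundles) $\simeq SN'\times I$ inside $N$. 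Once the asymptotics are matched and the modification is through $\sec\ge 0$, the finiteness of uniform distance and the path-connectedness within a uniform component are formal.
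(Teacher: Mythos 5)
Your approach matches the paper's: match the two metrics' asymptotic geometry via Guijarro's modification, use the trivial $h$-cobordism between the normal sphere bundles (triviality coming from simple-connectedness and $\dim\ge 5$ via Smale's theorem) to produce an ambient diffeomorphism identifying the ends, and then connect the resulting metrics by a path.

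However, your step~(5) and the whole ``main obstacle'' paragraph reflect a misreading of the statement, and it is worth flagging because it led you to introduce machinery that is not needed. The proposition only asserts that the two metrics lie in the same path-component of $\mathfrak R^u(N)$ --- the space of \emph{all} complete metrics, with no curvature constraint on the path. Once $\bar g$ and $\bar g'$ agree outside a compact set and both are complete, the straight-line convex combination $t\bar g + (1-t)\bar g'$ is automatically a path of complete metrics staying within finite uniform distance, so it lies entirely in one component of $\mathfrak R^u(N)$. There is no need for the path to stay in $\mathfrak R^u_{\sec\ge 0}(N)$, no need for a ``Sharafutdinov-type or Guijarro-type isotopy'' substitute for convex interpolation, and no need to deform $\bar g, \bar g'$ back to the original $g, g'$: the proposition only asks for \emph{some} pair of nonnegatively curved metrics with souls $S$, $S'$ lying in one component of $\mathfrak R^u(N)$, and the modified metrics $\bar g,\bar g'$ (which retain $\sec\ge 0$ and have the same souls, since Guijarro's modification changes the metric only \emph{outside} a tubular neighborhood of the soul) already serve. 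Your worry that ``convexity preserves $\sec\ge 0$ only in special situations'' is correct in general, but irrelevant here; the curvature condition applies only to the two endpoints, not to the path.

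One small additional point: to know the normal sphere bundle to $S'$ is simply-connected you should explicitly invoke the fact that the two normal sphere bundles are homotopy equivalent (the paper does this via Proposition~\ref{prop: normal bundles are fiber homotopic}); your parenthetical ``indeed $h$-cobordant'' handles it, since $h$-cobordant closed manifolds are homotopy equivalent and $\pi_1$ is a homotopy invariant, but the logical order matters: you need simple-connectedness of \emph{both} ends before the $h$-cobordism theorem trivializes the cobordism.
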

\begin{proof}
By Proposition~\ref{prop: normal bundles are fiber homotopic}
the normal sphere bundle to $S^\prime$ is also simply-connected,
and by Lemma~\ref{lem: s1-bundles are hcobordant} if normal sphere bundles
to $S$, $S^\prime$ are chosen to be disjoint, then
the region between them is a (trivial) h-cobordism. 
Thus closed tubular neighborhoods of $S$, $S^\prime$ are diffeomorphic. 
The complement of an open tubular neighborhood of the soul
is of course the product of a ray and the boundary of the tubular neighborhood.
The diffeomorphism of closed tubular neighborhoods of $S$, $S^\prime$ 
extends to a self-diffeomorphism of $N$, which
can be chosen to preserve any given product structures on the complements
of tubular neighborhoods, and which is identity near $S$ and $S^\prime$.

By~\cite{Gui} any complete metric of $\sec\ge 0$ can be
modified by changing it outside a sufficiently small tubular 
neighborhood of the soul so that the new metric has the same 
soul and outside a larger tubular neighborhood it is the 
Riemannian product of a ray and a metric on 
the normal sphere bundle. 
Performing this modification
to the metrics at hand, and pulling back one of the metric
via a self-diffeomorphism of $N$ as above, one 
we get nonnegatively curved metrics $g$, $g^\prime$ 
with souls $S$, $S^\prime$ such that outside some of their 
common tubular neighborhood $D=D^\prime$ the metrics are 
Riemannian products $\d D\times\mathbb R_+$, 
$\d D^\prime\times\mathbb R_+$.  
with the same $\mathbb R_+$-factor. Now the convex combination of
$g$, $g^\prime$ defines a path joining $g$, $g^\prime$
in $\mathfrak R^u (N)$. 
\end{proof}

\section{Infinitely many souls of codimension $4$}
\label{sec: souls of codim 4}
 
This section contains examples of manifolds that admit
metrics with infinitely many non-homeomorphic souls of
codimension $4$. The examples are obtained by modifying
arguments in~\cite{Bel, KPT} and invoking the new topological 
ingredient, Proposition~\ref{prop: disk bundles pullback} below,
which is best stated with the following notation.

Given vector bundles $\a_0$, $\b_0$ over a space $Z$,
let ${\bf V}(Z, \a_0, \b_0)$ be the set of pairs 
$(\a, \b)$ of vector bundles over $Z$ such that 
$\a$, $\b$ are (unstably) fiber homotopy equivalent 
to $\a_0, \b_0$,
respectively, and the rational Pontryagin classes
of $\a\oplus\b$, $\a_0\oplus\b_0$ become equal when pullbacked
via the sphere bundle projection $b\co S(\b)\to Z$.
%
%This definition may seem awkward, yet it is exactly what
%is needed to prove Theorems~\ref{thm: codim 4 souls over S4}  
%and~\ref{thm: kpt modified} below. 
%
Also denote the fiber dimension of $S(\a_0)$, $S(\b_0)$ by
$k_{\a_0}$, $k_{\b_0}$, respectively. 

\begin{prop} \label{prop: disk bundles pullback}
If $k_{\a_0}+k_{\b_0}+\dim(Z)\ge 5$ and
$k_{\a_0}\ge 2$, and $Z$ is a closed smooth manifold,
then the number of diffeomorphism
types of the disk bundles $D(b^\#\a)$ with 
$(\a, \b)$ in ${\bf V}(Z, \a_0, \b_0)$  is finite.
%is at most the number of elements in 
%the (finite) set $[Z, F]$.
\end{prop}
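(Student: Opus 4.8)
The plan is to reduce the diffeomorphism classification of the disk bundles $D(b^\#\a)$ to a finiteness statement about vector bundles over $S(\b)$, and then to apply standard finiteness results in bundle theory. First I would fix the base: since $\a$, $\b$ are each fiber homotopy equivalent to the fixed bundles $\a_0$, $\b_0$, the sphere bundles $S(\b)$ are all fiber homotopy equivalent to $S(\b_0)$; in particular they form a \emph{bounded} family (finitely many homotopy types, since $Z$ is a finite complex and the relevant classifying space for $SG_{k_{\b_0}}$-fibrations has finitely generated homotopy in each degree). For the purposes of counting diffeomorphism types of $D(b^\#\a)$ it is enough to work over a single $S(\b)$ at a time; so I would henceforth think of $b^\#\a$ as a rank-$k_{\a_0}$ vector bundle over the fixed closed manifold $S(\b)$, whose \emph{fiber homotopy type} is pinned down (it is $b^\#\a_0$, up to the finitely many choices above) and whose rational Pontryagin classes agree with those of $b^\#\a_0$ by the definition of ${\bf V}(Z,\a_0,\b_0)$ — note that $p_*(b^\#(\a\oplus\b)) = p_*(b^\#\a) + p_*(b^\#\b)$ and $b^\#\b$ has a nowhere-zero section, so pulling back further kills $p_*(b^\#\b)$ and one recovers agreement of $p_*(b^\#\a)$ with $p_*(b^\#\a_0)$ after a further sphere-bundle pullback; a short diagram chase (using the Gysin sequence of the extra $S(b^\#\b)$-bundle, injective on rational cohomology in the range that matters since $k_{\b_0}\ge 0$) gives $p_*(b^\#\a) = p_*(b^\#\a_0)$ outright, or at least in finitely many ways.

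The core step is then: \emph{over a fixed closed manifold $X$ (here $X=S(\b)$), the set of rank-$q$ vector bundles with prescribed stable fiber homotopy type and prescribed rational Pontryagin classes is finite, up to vector bundle isomorphism.} This is classical: vector bundles of rank $q$ over $X$ are classified by $[X,BO(q)]$, the fibration $BO(q)\to BSG(q)$ (or $BO\to BSG$ stably) has fiber with finite homotopy groups — indeed $SG/O$ has finite homotopy groups in all degrees by the finiteness of the stable homotopy groups of spheres together with Serre's theorem — so fixing the fiber homotopy type leaves only finitely many isomorphism classes once one also fixes the image in rational cohomology, i.e. the rational Pontryagin classes. (In fiber dimensions below the stable range one must be slightly more careful and use $BSO(q)\to BSG_q$; the fibre $SG_q/SO(q)$ need not have all homotopy groups finite, but the relevant obstruction/difference groups that enter the count of \emph{bundles} rather than \emph{maps} are still finite because the non-finite part sits in the top cell of the fiber, $\pi_{q-1}$, and is detected rationally by the Euler class, which is in turn controlled by $p_*$ when $q$ is even and is torsion contribution otherwise; this is exactly the kind of bookkeeping Sections~\ref{sec: vb with diffeo tot spaces}--\ref{sec: bundles of rank <4} are set up to handle, and I would cite whichever lemma there isolates it.) The hypothesis $k_{\a_0}\ge 2$, i.e. $q\ge 2$, is what makes this bundle-theoretic count work.

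Finally I would pass from finiteness of the set of \emph{bundles} $b^\#\a$ to finiteness of \emph{diffeomorphism types} of their disk bundles $D(b^\#\a)$. Isomorphic vector bundles have diffeomorphic disk bundles, so finitely many isomorphism classes of $b^\#\a$ give finitely many diffeomorphism types of $D(b^\#\a)$ — this direction is immediate and is all the Proposition asks for. (One does not need, and should not claim, the converse: non-isomorphic bundles can certainly have diffeomorphic total spaces — that phenomenon is the whole point of the paper.) Assembling the pieces: finitely many base manifolds $S(\b)$, over each finitely many isomorphism classes of rank-$k_{\a_0}$ bundles $b^\#\a$ with the prescribed fiber homotopy type and Pontryagin data, hence finitely many diffeomorphism types of $D(b^\#\a)$ in total; the dimension hypothesis $k_{\a_0}+k_{\b_0}+\dim(Z)\ge 5$ is not needed for this counting but is recorded because it is what one uses elsewhere (via the s-cobordism theorem) to actually \emph{realize} these disk bundles as tubular neighborhoods of souls.

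\textbf{Main obstacle.} The delicate point is the rank-$q$ (unstable) bundle count when $q=k_{\a_0}$ is small — precisely the borderline $q=2$ allowed by the hypothesis — where $SG_q/SO(q)$ has an infinite homotopy group in degree $q-1$ and one must argue that, \emph{after} fixing the rational Pontryagin classes of $\a\oplus\b$ pulled back to $S(\b)$, the residual freedom in the Euler class (equivalently, in that degree-$(q-1)$ homotopy group of the fiber) is finite. Making that step clean — rather than merely plausible — is where the real work lies, and it is where I expect to lean most heavily on the structural results about low-rank bundles developed later in the paper.
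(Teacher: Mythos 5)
Your strategy is fundamentally mis-aimed, and the gap is not the ``main obstacle'' you flag at the end but the very first reduction. You reduce the problem to showing that the vector bundles $b^\#\a$ fall into finitely many isomorphism classes over $S(\b)$. That statement is false in precisely the situation the Proposition is designed for: the membership condition in ${\bf V}(Z,\a_0,\b_0)$ pins down $b^\ast p(\a\oplus\b)$, not $b^\ast p(\a)$, and since $p(\b)$ varies with the pair $(\a,\b)$, you cannot deduce anything about $p(b^\#\a)$ alone. In the paper's main application (Theorem~\ref{thm: codim 4 souls over S4}) the family is $(\a,\b)=(\xi_i^k,\xi_{l-i+m}^n)$, where $p_1(\a)=\pm 4i$ runs over infinitely many values; for $n\ge 5$ the Euler class of $\b$ vanishes, so $b^\ast$ is injective on $H^4(S^4;\mathbb Q)$ and the bundles $b^\#\a$ have pairwise distinct rational Pontryagin classes and hence lie in infinitely many bundle isomorphism classes. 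The Proposition's content is exactly that their disk bundles nonetheless fall into finitely many \emph{diffeomorphism} types as manifolds, which is a surgery-theoretic phenomenon that a bundle-isomorphism count can never see.

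The specific step by which you try to isolate $p(b^\#\a)$ is also incorrect: the pullback $b^\#\b$ does have a tautological nowhere-zero section, but a nowhere-zero section only splits off a trivial line and kills the Euler class; the complementary rank-$k_{\b_0}$ summand can (and does) carry nontrivial Pontryagin classes, so ``pulling back further'' does not kill $p(b^\#\b)$. Your closing remark that the hypothesis $k_{\a_0}+k_{\b_0}+\dim(Z)\ge 5$ is unnecessary is also wrong. The paper's proof goes a different way: one builds a homotopy equivalence of pairs $F_{\a,\b}\co(D(b^\#\a),S(b^\#\a))\to(D(b_0^\#\a_0),S(b_0^\#\a_0))$ by fiberwise cone constructions, shows it pulls back rational Pontryagin classes of tangent bundles (this is where the ${\bf V}$-condition on $\a\oplus\b$ is actually used, to control the tangent bundle of the total space rather than $\a$ itself), passes to a subsequence where these equivalences are tangential, and then invokes the tangential surgery exact sequence for $Y=D(b_1^\#\a_1)$. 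There the condition $k_{\a_0}\ge 2$ makes $\d Y\hookrightarrow Y$ a $\pi_1$-isomorphism so the Wall groups vanish by the $\pi$--$\pi$ theorem, and $k_{\a_0}+k_{\b_0}+\dim(Z)\ge 5$ gives $\dim Y\ge 6$; finiteness of $[Y,F]$ then bounds the number of diffeomorphism types. Both hypotheses are essential and both enter through surgery, not through classification of bundles.
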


\begin{proof}
Denote the sphere bundle projection of $\a$, $\b$, $\a_0$, $\b_0$
by $a$, $b$, $a_0$, $b_0$, respectively, and fiber homotopy
equivalences by $f_\a\co S(\a)\to S(\a_0)$ and
$f_\b\co S(\b)\to S(\b_0)$.

The fiberwise cone construction yields a homotopy equivalence
\[
\hat f_\a\co (D(\a), S(\a))\to (D(\a_0), S(\a_0))
\]
that extends $f_\a$ and satisfies ${a_0}{\tinycirc}{\hat f_\a}=a$.
Pulling back $\hat f_\a$ via $b$ gives a homotopy equivalence
$b^\#\hat f_\a\co (D(b^\#\a), S(b^\#\a))\to (D(b^\#\a_0), S(b^\#\a_0))$.
Since $b={b_0}{\tinycirc}{f_\b}$,
the disk bundle $D(b^\#\a_0)$ is the $f_\b$-pullback of
$D(b_0^\#\a_0)$, so composing $b^\#\hat f_\a$ with 
the bundle isomorphism induced by $f_\b$ gives a homotopy
equivalence 
\[
F_{\a,\b}\co (D(b^\#\a), S(b^\#\a))\to 
(D(b_0^\#\a_0), S(b_0^\#\a_0)).
\]
Now we show that $F_{\a,\b}$ pulls back rational 
Pontryagin classes. The tangent bundles to 
$D(b^\#\a)$ and $D(b_0^\#\a_0)$ are determined by their
restrictions to the zero sections, 
and these restrictions stably are respectively
\[
b^\#\a\oplus\tau_{S(\b)}=
b^\#\a\oplus b^\#(\b\oplus\tau_Z)=b^\#(\a\oplus\b\oplus\tau_Z)
\] 
and $b_0^\#(\a_0\oplus\b_0\oplus \tau_Z)$. 
The restriction of $F_{\a,\b}$ to the zero section is $f_\b$, so pulling 
back the latter bundle via $f_\b$ gives 
two bundles over $S(\b)$, namely, 
$b^\#(\a\oplus\b\oplus \tau_Z)$ and 
$b^\#(\a_0\oplus\b_0\oplus\tau_Z)$, which
by assumption have the same rational total Pontryagin class.

Arguing by contradiction lets us pass to subsequences, thus
since rational Pontryagin classes determine a stable
vector bundle up to finite ambiguity, we may
pass to a subsequence in ${\bf V}(Z, \a_0, \b_0)$ 
for which the $F_{\a,\b}^{-1}$-pullbacks of all the bundles
$b^\#(\a\oplus\b\oplus\tau_Z)$ to $D(b_0^\#\a_0)$
are isomorphic. Fix $(\a_1,\b_1)$ in the subsequence so that 
$G_{\a,\b}:={F_{\a,\b}}{\tinycirc}{F_{\a_1,\b_1}^{-1}}$
is now tangential for any $(\a,\b)$.

To finish the proof we need a well-known tangential
surgery exact sequence 
\[
L_{n+1}^s(\pi_1(Y), \pi_1(\partial Y))~\longrightarrow~
{\bf S}^{s,t}(Y, \d Y))~\longrightarrow~[Y,F]~\longrightarrow~
L_{n}^s(\pi_1(Y), \pi_1(\partial Y))
\]
described e.g. in~\cite[Section 8]{BKS-mod2},
where ${\bf S}^{s,t}(Y, \d Y))$ is the tangential simple structure set
for a smooth manifold with boundary $Y$ of dimension $n\ge 6$. 

Set $Y:= D(b_1^\#\a_1)$; then
$G_{\a,\b}\co (D(b^\#\a), S(b^\#\a))\to (Y,\d Y)$
represents an element in ${\bf S}^{s,t}(Y, \d Y)$.
The assumption $k_{\a_0}\ge 2$
ensures that $\d Y\to Y$ is a $\pi_1$-isomorphism so that
the Wall groups $L_*^s(\pi_1(Y), \pi_1(\partial Y)$ vanish,
and the other dimension assumption gives 
$\dim(Y)=k_{\a_0}+k_{\b_0}+1+\dim(Z)\ge 6$.
By exactness ${\bf S}^{s,t}(Y, \d Y))$
is bijective to the set $[Y, F]$, which is a 
finite~\cite[Proposition 9.20(iv)]{Ran-book},
so that manifolds $D(b^\#\a)$ fall
into finitely many diffeomorphism classes. 
\end{proof}

\begin{rmk}
\label{rmk: fiberwise cone [Z, F]}
If in the definition of ${\bf V}(Z, \a_0, \b_0)$ 
we require that $\a\oplus\b$, $\a_0\oplus\b_0$ are stably
isomorphic, then the number of diffeomorphism types 
of manifolds $D(b^\#\a)$ is at most the order of the set
$[Z,F]$.
Indeed, let $\hat b\co D(\b)\to Z$, $\hat b_0\co D(\b_0)\to Z$
denote the disk bundle projections, and extend $F_{\a,\b}$ 
by the fiberwise cone construction to the homotopy
equivalence of triads
\[
\hat F_{\a,\b}\co D(\hat b^\#\a)\to D(\hat b_0^\#\a_0),
\]
which is tangential as $\a\oplus\b$, $\a_0\oplus\b_0$ are stably
isomorphic. Hence $F_{\a,\b}$ is also tangential, as a
restriction of $\hat F_{\a,\b}$ to submanifolds with 
trivial normal bundles. The geometric definition of
the normal invariant (see~\cite{Wal-book} after Lemma 10.6) 
easily implies that the normal invariant of $F_{\a,\b}$ is the
restriction of the normal invariant of $\hat F_{\a,\b}$, hence
the normal invariant of $F_{\a,\b}$ lies in the image 
of the restriction
$[D(\hat b_0^\#\a_0), F]\to [D(b_0^\#\a_0), F]$
whose domain is bijective to $[Z,F]$, 
as $D(\hat b_0^\#\a_0)$ is homotopy equivalent to $Z$. 
As in the proof of Proposition~\ref{prop: disk bundles pullback} 
the tangential surgery exact sequence implies that
the number of diffeomorphism types 
of manifolds $D(b^\#\a)$ is at most the number of 
normal invariants of the maps $F_{\a,\b}$, proving the claim.
\end{rmk}

Given $m\in \pi_4(BSO_3)\cong\mathbb Z$,
let $\xi_m^n$ be the corresponding rank $n$ vector bundle over $S^4$
with structure group $SO_3$ sitting in $SO_n$ 
in the standard way. Let $\eta_{l,m}^{k,n}$
denote the pullback of $\xi_m^n$ via the 
sphere bundle projection $S(\xi_l^k)\to S^4$.
Theorem~\ref{intro-thm: codim 4 inf many souls}
is obtained from the following by setting $l=0=m$.

\begin{thm} \label{thm: codim 4 souls over S4} 
If $k, n\ge 4$, then
$E(\eta_{l,m}^{k,n})$ admits infinitely many complete metrics 
of $\sec\ge 0$ with pairwise non-homeomorphic souls. 
\end{thm}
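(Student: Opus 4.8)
The plan is to combine the Grove--Ziller construction of nonnegatively curved metrics on bundles with $SO_3$ structure group with the finiteness statement of Proposition~\ref{prop: disk bundles pullback}, run in reverse: produce \emph{infinitely many} candidate bundles $b^\#\a$ all of whose total spaces admit $\sec\ge 0$ metrics, argue that the disk bundles (equivalently, the pairs (total space, zero section)) fall into finitely many diffeomorphism types, and conclude that infinitely many of them have non-homeomorphic zero sections sitting as souls in the same open manifold. The souls here will be the sphere bundles $S(\eta^{k,n}_{l,m})$ — more precisely, total spaces of $\xi^n_m$ pulled back over $S(\xi^k_l)$ — and the ambient manifolds will be total spaces of a further rank-$4$ (or appropriate rank) bundle over them, so that the codimension is $4$.

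First I would set up the combinatorics of $SO_3$-bundles over $S^4$. Since $\pi_4(BSO_3)\cong\Z$, for each integer $m$ we get $\xi^n_m$, and Grove--Ziller show the associated sphere/disk bundles (and their pullbacks under the constructions used in~\cite{Bel, KPT}) carry metrics of $\sec\ge 0$: the relevant total spaces are cohomogeneity-one or are built from such by taking further $SO_3$-bundles, so nonnegative curvature is preserved. Thus for a suitable infinite family of parameters (varying $m$, or varying a secondary bundle over $S(\xi^k_l)$ inside the $SO_3$-framework), all the manifolds $E(\eta^{k,n}_{l,m})$ bound nonnegatively curved disk bundles and hence appear as souls. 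The key point is that these infinitely many bundles are pairwise fiber homotopy equivalent (the relevant spherical fibrations over $S^4$ or over $S(\xi^k_l)$ are classified by a \emph{finite} set, e.g. $\pi_4(BSF_{n-1})$ or $[Z,BSF]$), and their Pontryagin classes agree after the pullback required in the definition of ${\bf V}(Z,\a_0,\b_0)$ — indeed over $S^4$ the rational Pontryagin class of an $SO_3\subset SO_n$ bundle is a fixed multiple of $p_1$ which dies when pulled back to a sphere bundle of positive fiber dimension. Hence these bundles all lie in ${\bf V}(Z,\a_0,\b_0)$ for the appropriate base $Z=S^4$ (or $Z=S(\xi^k_l)$) and reference bundles.

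Next I would apply Proposition~\ref{prop: disk bundles pullback}: with $k_{\a_0}\ge 2$ (the rank-$4$ normal bundle gives fiber dimension $3\ge 2$ of its sphere bundle, or whichever factor plays the role of $\a$) and $k_{\a_0}+k_{\b_0}+\dim(Z)\ge 5$ (guaranteed by $k,n\ge 4$), the disk bundles $D(b^\#\a)$ fall into finitely many diffeomorphism types. Since there are infinitely many parameter values but only finitely many diffeomorphism types of the \emph{pairs} $(N, S)=(D(b^\#\a), \text{zero section})$ up to diffeomorphism would be the stronger statement we want; what the proposition directly gives is finitely many diffeomorphism types of the $N$'s, so I would also need that infinitely many of the corresponding souls $S=E(\eta^{k,n}_{l,m})$ are pairwise non-homeomorphic. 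That I would get from a homotopy or (co)homology invariant of $E(\eta^{k,n}_{l,m})$ that takes infinitely many values as the parameter varies — for $SO_3$-bundles over $S^4$ this is the classical computation distinguishing the $S^3$-bundles over $S^4$ (Milnor's invariant, or the linking form / Pontryagin class mod torsion of the sphere bundle), pulled up to the relevant total space; these invariants are homeomorphism invariants. So infinitely many parameter values give non-homeomorphic souls sitting in finitely many ambient $N$'s, hence by pigeonhole one $N$ contains infinitely many of them, and taking that $N$ (after multiplying by nothing, since codimension is already $4$) yields the claim. Unwinding the reindexing, this is exactly the statement for $E(\eta^{k,n}_{l,m})$.

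The main obstacle I expect is the geometric input rather than the topology: one must check that \emph{all} the bundles in the chosen infinite family actually admit complete $\sec\ge 0$ metrics with the prescribed soul — i.e., that the Grove--Ziller / Belegradek / KPT constructions apply uniformly across the family and produce the soul as the zero section of $b^\#\a$ inside the total space of a further bundle. This requires that the extra rank-$4$ bundle $\a$ over $S(\xi^k_l)$ (or its analog) can itself be given $SO_3$ or otherwise "Grove--Ziller-admissible" structure group so nonnegative curvature survives taking its total space; the constraint $n\ge 4$ is presumably there to allow the $SO_3\subset SO_n$ reduction. A secondary technical point is bookkeeping the pullback-of-Pontryagin-classes condition so that the family genuinely lies in a single set ${\bf V}(Z,\a_0,\b_0)$ — this should reduce to the vanishing of $p_1$ of an $SO_3$-bundle over $S^4$ after pulling back to a sphere bundle, which is elementary, but it must be stated carefully for the pulled-up bundles $\eta^{k,n}_{l,m}$.
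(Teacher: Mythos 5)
Your overall strategy---build an infinite family of bundles lying in a single class $\mathbf{V}(Z,\a_0,\b_0)$, invoke Proposition~\ref{prop: disk bundles pullback} to bound the number of diffeomorphism types of the disk bundles, and distinguish the zero sections (souls) by a Pontryagin class---is the same as the paper's. However, there are two problems, one presentational and one substantive.

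The presentational one: the roles of soul and ambient manifold are garbled. In Proposition~\ref{prop: disk bundles pullback} one takes $Z=S^4$, $\a=\xi^n_m$, $\b=\xi^k_l$; then $D(b^\#\a)=D(\eta^{k,n}_{l,m})$ \emph{is} the ambient manifold of the theorem (its interior is $E(\eta^{k,n}_{l,m})$), and the souls are the zero sections $S(\b)=S(\xi^k_i)$ for varying $i$. There is no further rank-$4$ bundle sitting on top, and neither $S(\eta^{k,n}_{l,m})$ nor $E(\eta^{k,n}_{l,m})$ is the soul.

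The substantive one is a genuine gap at the pigeonhole step. Proposition~\ref{prop: disk bundles pullback} says the disk bundles $D(\eta^{k,n}_{i,\,l-i+m})$, $i\equiv l\pmod{12}$, lie in finitely many diffeomorphism classes; pigeonhole then produces \emph{some} index $i_0$ whose disk bundle is diffeomorphic to infinitely many others, and hence some $E(\eta^{k,n}_{i_0,\,l-i_0+m})$ carrying infinitely many non-homeomorphic souls. But the theorem claims this for $E(\eta^{k,n}_{l,m})$ itself, for the given $l,m$; nothing in the pigeonhole forces $(i_0,\,l-i_0+m)=(l,m)$, and ``unwinding the reindexing'' does not close this. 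The paper acknowledges this exact point and closes it by a stronger argument: since $\pi_4(F)=0$, Remark~\ref{rmk: fiberwise cone [Z, F]} shows that \emph{every} $D(\eta^{k,n}_{i,\,l-i+m})$ is diffeomorphic to $D(\eta^{k,n}_{l,m})$, not merely to some representative. That Remark requires $\xi^k_i\oplus\xi^n_{l-i+m}$ and $\xi^k_l\oplus\xi^n_m$ to be stably isomorphic (i.e.\ equal in $\pi_4(BSO)$)---a strictly stronger condition than the equal-pulled-back-Pontryagin-classes requirement in the definition of $\mathbf{V}$---and the paper arranges this via the constraint $i+j=l+m$ on the second index. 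Your proposal does not notice that this stronger input is needed, nor sets up the stable isomorphism that the Remark requires.
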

\begin{proof}
It is explained in~\cite{Bel}
that $S(\xi_l^k)$, $S(\xi_i^k)$ are fiber homotopy equivalent 
if $l-i$ is divisible by $12$ for $k\ge 4$.
(In fact, up to fiber homotopy equivalence there are only finitely 
many oriented $S^3$-fibrations over a finite complex $Z$
that admit a section, because their classifying map
in $[Z, BSG_4]$ factor through $BSF_3$
and $[Z, BSF_3]$ is finite as $BSF_3$ is
rationally contractible, see~\cite[Proposition 9.20(i)]{Ran-book}.)

Also it is noted in~\cite{Bel} that 
$\xi_l^k\oplus\xi_m^n$ and $\xi_i^k\oplus\xi_j^n$
are equal in $\pi_4(BSO)$ if $l+m=i+j$.
Of course if $j:=l-i+m$ and $l-i$ 
is divisible by $12$, then $m-j$ is divisible by $12$.

Thus we get an infinite family $(\xi_i^k, \xi_{l-i+m}^n)$ 
parametrized by $i$ with $l-i$ divisible by $12$ such that
$\xi_i^k\oplus\xi_{l-i+m}^n=\xi_l^k\oplus\xi_m^n$ in $BSO$,
and $S(\xi_i^k)$, $S(\xi_{l-i+m}^n)$ is fiber homotopy equivalent
to $S(\xi_l^k)$, $S(\xi_m^n)$, respectively.

By Proposition~\ref{prop: disk bundles pullback} 
$D(\eta_{i,l-i+m}^{k,n})$ lie in finitely many diffeomorphism
classes one of which must contain $D(\eta_{l,m}^{k,n})$. 
A priori this does not show that there are infinitely many 
$D(\eta_{i,l-i+m}^{k,n})$'s that are diffeomorphic to 
$D(\eta_{l,m}^{k,n})$. Yet $\pi_4(F)=0$ so 
Remark~\ref{rmk: fiberwise cone [Z, F]}
implies that $F_{\xi_i^k,\xi_{l-i+m}^n}\co 
D(\eta_{i,l-i+m}^{k,n})\to D(\eta_{l,m}^{k,n})$ 
is homotopic to a diffeomorphism. (Without invoking
Remark~\ref{rmk: fiberwise cone [Z, F]} we only get
an infinite sequence of $D(\eta_{i,l-i+m}^{k,n})$'s 
that are diffeomorphic to some
$D(\eta_{i_0,l-i_0+m}^{k,n})$.)

As in~\cite{Bel} results of Grove-Ziller
show that each $E(\eta_{i,l-i+m}^{k,n})$
are nonnegatively curved with zero section $S(\xi_i^k)$
being a soul, and $p_1(S(\xi_i^k))$ is $\pm 4i$-multiple of
the generator, so assuming $i\ge 0$, we get that the souls
are pairwise non-homeomorphic.
\end{proof}

The proof of Theorem~\ref{thm: kpt modified} below is a 
slight variation of an argument in~\cite{KPT}. A major difference
is in employing Proposition~\ref{prop: disk bundles pullback},
and checking it is applicable,
in place of ``above metastable range'' considerations of~\cite{KPT}. 
Another notable difference is that to satisfy the conditions of
Proposition~\ref{prop: disk bundles pullback} we have to vary 
$q,r$ and keep $a,b$ fixed, while exactly the opposite 
is done in~\cite{KPT}. This requires a number of minor changes, so
instead of extracting what we need from~\cite{KPT}
we find it easier (and more illuminating) to present a 
self-contained proof below; we stress that all computational tricks 
in the proof are lifted directly from~\cite{KPT}.

Recall that for a cell complex $Z$ each element in $H^2(Z)$
can be realized as the Euler class of a unique $SO_2$-bundle
over $Z$.

Let $X=S^2\times S^2\times S^2$. Fix an obvious basis 
in $H^2(X)$ whose elements are dual to the $S^2$-factors.
Let $\g$, $\xi$, $\mu$ be the complex line bundles
over $X$ with respective Euler classes 
$(a,b,0)$,  $(0, q,r)$, $(0,-q, r)$ in this basis,
where $a,b, q,r$ are nonzero integers and $a,b$ are coprime.
Let $\eta=\xi\oplus \e$ and $\z=\mu\oplus\e$,
where $\e$ is the trivial complex line bundle. 
 
Denote the pullback of $\eta$, $\z$ via 
$\pi_\g\co S(\g)\to X$ by $\hat\eta$, $\hat\z$, respectively, 
and the pullback of $\g$ via 
$\pi_\eta\co S(\eta)\to X$ by $\hat\g$.
By definition of pullback, 
$S(\hat\eta)$ and $S(\hat\g)$
have the same total space, which we denote $M_{\g,\eta}$.
Denote by $\pi_{\hat\g}$, $\pi_{\hat\eta}$ 
the respective sphere bundle projections
$S(\hat \eta)\to S(\eta)$, $S(\hat \g)\to S(\g)$;
note that ${\pi_{\hat\g}}{\tinycirc}{\pi_\eta}=
{\pi_{\hat\eta}}{\tinycirc}{\pi_\g}$.
Let $\tilde\z$ be the pullback of $\hat\z$ via 
$\pi_{\hat\eta}\co M_{\g,\eta}\to S(\g)$.
With these notations we prove:

\begin{thm}\label{thm: kpt modified}
\textup{(i)} 
For a universal $c>0$, the manifold
$E(\tilde\z)$ admits a complete metric
with $\sec(E(\tilde\z))\in [0,c]$ such that 
the zero section $M_{\g,\eta}$ of $\tilde\z$ is
a soul of diameter $1$.\newline
\textup{(ii)}
For fixed $\g$ and variable $\eta$, $\z$, the manifolds
$D(\tilde\z)$ lie into finitely many diffeomorphism classes, while
the manifolds $M_{\g,\eta}$ lie in  
infinitely many homeomorphism classes. 
\end{thm}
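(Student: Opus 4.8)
The plan is to establish (i) by invoking the Grove--Ziller and Cheeger--Gromoll machinery exactly as in \cite{KPT}, and (ii) by a twofold argument: an upper bound via Proposition~\ref{prop: disk bundles pullback} and a lower bound via a characteristic-class computation on $M_{\g,\eta}$. For part (i), I would first observe that $X=S^2\times S^2\times S^2$ carries a product metric of $\sec\ge 0$, that $S(\g)$ is the total space of a principal $S^1$-bundle over $X$ and hence (after a suitable choice of connection) admits a $\sec\ge 0$ metric, and likewise $S(\eta)$ is an $S^3$-bundle over $X$; the iterated pullbacks $M_{\g,\eta}$ and then $\tilde\z$ are sphere/disk bundles associated to these, so the Grove--Ziller construction (applied to the relevant principal torus bundles, with connection metrics) produces a complete metric of $\sec\in[0,c]$ on $E(\tilde\z)$ with the zero section $M_{\g,\eta}$ as a totally geodesic soul. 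Rescaling the fiber direction keeps curvature bounded while one normalizes $\diam(M_{\g,\eta})=1$; the universal constant $c$ is independent of $a,b,q,r$ because the construction depends only on the bundle data through finitely many discrete parameters and a compact family of connections. (Here I would need to check that the diameter normalization does not blow up curvature, which is where one rescales the base rather than the fiber.)

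For the finiteness half of (ii), I would cast the family $\{D(\tilde\z)\}$ into the framework of Proposition~\ref{prop: disk bundles pullback} with $Z=S(\g)$, $\a_0$ corresponding to $\hat\z_0$ (a fixed choice of $\z$) and $\b_0$ corresponding to $\hat\eta_0$, so that $b\co S(\hat\eta)\to S(\g)$ is the relevant sphere bundle projection and $b^\#\hat\z=\tilde\z$. I must verify the hypotheses: $k_{\a_0}=\dim$ of the sphere fiber of $\z$ (which is an $S^3$-bundle, so $k_{\a_0}=3\ge 2$), $k_{\b_0}=3$, and $\dim(Z)=\dim S(\g)=7$, so $k_{\a_0}+k_{\b_0}+\dim(Z)\ge 5$ is satisfied; and $Z=S(\g)$ is a closed smooth manifold. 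The key point is that for fixed $\g$ and varying $q,r$, the bundles $\eta=\xi\oplus\e$ and $\z=\mu\oplus\e$ vary within a fixed fiber homotopy type (since $SF_3$-classified data over a finite complex is finite, as noted in the excerpt after \cite[Proposition 9.20(i)]{Ran-book}) and, after passing to the infinite subfamily where the relevant Pontryagin classes stabilize mod torsion, the rational Pontryagin class condition in the definition of ${\bf V}(Z,\a_0,\b_0)$ holds — here I would use that $p_1(\xi\oplus\mu)$ and $p_1(\z\oplus\eta)$ depend on $q^2,r^2$ in a controlled way and agree after pullback to $S(\eta)$ on the relevant subsequence. Hence Proposition~\ref{prop: disk bundles pullback} gives finiteness of the diffeomorphism types of $D(\tilde\z)=D(b^\#\a)$.

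For the lower bound, I would show that infinitely many of the $M_{\g,\eta}$ are pairwise non-homeomorphic by computing a homeomorphism invariant — a rational Pontryagin class or a cohomology ring datum — that takes infinitely many values as $q,r$ vary. The natural choice is $p_1(M_{\g,\eta})\in H^4(M_{\g,\eta};\mathbb Q)$ (a homeomorphism invariant by Novikov), which by the Whitney sum formula equals the pullback of $p_1(\g)+p_1(\eta)+p_1(X)$; since $p_1(\eta)=p_1(\xi\oplus\e)$ is essentially $-(q^2)$ times a generator (from $c_1(\xi)^2$), and since the pullback map $H^4(X)\to H^4(M_{\g,\eta})$ is sufficiently injective on the relevant summand (one checks this via the Gysin sequences of the two sphere bundles, using that $a,b$ are coprime and $r\ne 0$ to ensure the Euler classes are non-torsion), infinitely many choices of $q$ give infinitely many distinct values of $p_1$, hence infinitely many homeomorphism types. \emph{The main obstacle I anticipate} is precisely this last injectivity/non-vanishing bookkeeping in $H^*(M_{\g,\eta})$: the iterated Gysin sequences for $S(\g)\to X$ and then $S(\eta)\to S(\g)$ (equivalently $S(\hat\g)\to S(\eta)$) must be analyzed carefully to confirm that the class detecting $q$ survives and is not killed by a relation, and that it is genuinely a topological (not merely smooth) invariant — following the computational tricks of \cite{KPT} but adapted to varying $q,r$ with $a,b$ fixed rather than the reverse.
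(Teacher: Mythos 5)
Your proposal follows the same overall architecture as the paper's proof (Proposition~\ref{prop: disk bundles pullback} applied with $Z=S(\g)$, $\a=\hat\z$, $\b=\hat\eta$, plus a Pontryagin class computation on $M_{\g,\eta}$), but it has a real gap at the crucial step of verifying the Pontryagin class hypothesis for Proposition~\ref{prop: disk bundles pullback}.

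You propose to "pass to the infinite subfamily where the relevant Pontryagin classes stabilize mod torsion" and assert that $p_1(\z\oplus\eta)$ "depends on $q^2,r^2$ in a controlled way." Both claims are off. Since the squares of the basis elements $\mathbf{y},\mathbf{z}$ vanish in $H^4(X)$, one has $e(\xi)^2=2qr\,\mathbf{y}\mathbf{z}$ and $e(\mu)^2=-2qr\,\mathbf{y}\mathbf{z}$; these classes depend on $qr$, not on $q^2,r^2$, and as $q,r$ vary they take infinitely many integral values. So there is no infinite subfamily in which $p_1(\eta)$ (or $p_1(\xi)$) alone stabilizes. What makes the argument work is that the sign in $e(\mu)=(0,-q,r)$ (as opposed to $(0,q,r)$) was engineered precisely so that $p_1(\z\oplus\eta)=e(\mu)^2+e(\xi)^2=(-2qr+2qr)\mathbf{y}\mathbf{z}=0$ exactly, for \emph{every} choice of $q,r$. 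Without noticing this cancellation, the hypothesis "$p_1$ of $\a\oplus\b$ becomes constant after pullback" simply cannot be established by passing to subsequences, and the finiteness half of (ii) fails to follow. This is the heart of why the parameters are chosen as they are; your proposal treats it as routine bookkeeping.

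Two smaller issues. First, the lower bound: you claim $p_1(\eta)$ is "essentially $-q^2$ times a generator," but the correct computation (via the Gysin sequences, with $\mathbf{y}^2=\mathbf{z}^2=0$) gives $p_1(M_{\g,\eta})=2qra\cdot\mathbf{w}\mathbf{u}$ where $\mathbf{w}\mathbf{u}$ generates $H^4(M_{\g,\eta})$; the sign and the dependence on $qr$ rather than $q^2$ matter if one is to conclude infinitely many homeomorphism types. Second, for part (i), the universality of $c$ is not because "the construction depends only on finitely many discrete parameters"; the parameters range over an infinite set. The reason $c$ is universal is that every $E(\tilde\z)$ in the family is a Riemannian-submersion quotient of a \emph{single fixed} nonnegatively curved manifold $(S^3)^3\times S^3\times\mathbb R^4$ (with a fixed rotationally symmetric metric on the last factor) by an isometric torus action, so O'Neill's formula gives a curvature bound independent of $(a,b,q,r)$; the diameter normalization then works because $M_{\g,\eta}$ both covers and submerses onto fixed compact spaces, giving uniform two-sided diameter bounds.
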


\begin{proof} \textup{(i)} 
Recall that
any principal $S^1$-bundle $P$ over $(S^2)^n$
can be represented as $(S^3)^n\times_\rho S^1$
where $\rho\co T^n\to S^1$ is a homomorphism and
$T^n$ acts on $(S^3)^n$ as the product of standard
$S^1$-actions on $S^3$. (Indeed, the pullback of 
the $S^1$-bundle to $(S^3)^n$ can be trivialized
as $H^2((S^3)^n)=0$, and $\rho$ comes from the $T^n$-action
on the $S^1$-factor.) Therefore, $P\to (S^2)^n$
can be identified with $(S^3)^n/\ker(\rho)\to (S^3)^n/T^n$.

Specializing to our situation, 
let $\rho_\g$, $\rho_\eta$, $\rho_\z$ be the
homomorphisms $T^3\to S^1$ corresponding to 
the principal circle bundles that are (uniquely) determined by
$\g,\eta,\z$, respectively. Thus
the principal circle bundle $S(\g)$ equals 
$(S^3)^3/\ker(\rho_\g)$, and 
the fiber product 
$S(\eta)\oplus\z$ can be written as the associated bundle
$(S^3)^3\times_{\rho_\eta} S^3\times_{\rho_\z} \mathbb R^4$;
this is an $S^3\times\mathbb R^4$-bundle over $B$.
The pullback of this latter bundle to $S(\g)$ has
total space $E(\tilde\z)$, and it
can then be written as 
$(S^3)^3\times_{\rho_\eta\vert_{\ker(\rho_\g)}} 
S^3\times_{\rho_{\g}\vert_{\ker(\rho_\g)}} \mathbb R^4$.

All the actions are isometric, so
giving $\mathbb R^4$ a rotationally symmetric metric isometric
to $S^3\times \mathbb R_+$ outside a compact subset, we see
that $E(\tilde\z)$
gets a Riemannian submersion metric of $\sec\in [0,c]$
for a universal $c$. By a standard argument involving
a rotationally symmetric exhaustion function on $\mathbb R^4$,
the zero section $M_{\g,\eta}$ is a soul.
Since $M_{\g,\eta}$ is a quotient of 
$(S^3)^3\times_{\rho_\eta} S^3$ that can be further
Riemannian submersed onto a fixed manifold $S(\g)$,
the diameter of $M_{\g,\eta}$ is uniformly bounded 
above and below which can be rescaled to be $1$, 
while keeping universal curvature bounds on $E(\tilde\z)$.

\textup{(ii)}
First, we show that $M_{\g,\eta}$ fall into infinitely many
homeomorphism types.
Since $\tau_X$ is stably trivial, 
computing the first Pontryagin class gives
\[
p_1(M_{\g,\eta})=\pi_{\hat\eta}^\ast\, p_1(\hat\eta\oplus\tau_{S(\g)})=
\pi_{\hat\eta}^\ast\pi_\g^\ast p_1(\eta\oplus\g\oplus \tau_X)=
\pi_{\hat\eta}^\ast\pi_\g^\ast p_1(\xi\oplus\g).
\] 
Now $p_1(\g\oplus\xi)=
p_1(\g)+p_1(\xi)=e(\g)^2+e(\xi)^2$ 
and the Gysin sequence for $\g$ gives $\pi_\g^\ast e(\g)^2=0$
because the kernel of 
$\pi_\g^\ast\co H^4(X)\to H^4(S(\g))$ is the 
image of the (cup) multiplication by $e(\g)$.

We compute $\pi_{\hat\eta}^\ast\pi_\g^\ast p_1(\xi)$
from the commutative diagram below, whose rows are 
Gysin sequences for 
$\g$, $\hat\g$, while all vertical arrows are isomorphisms for $i\le 2$
because they fit into the Gysin sequences for $\eta$, $\hat\eta$
where injectivity follows as  $e(\eta)$, $e(\hat\eta)$ vanish
and surjectivity holds for $i\le 2$ as $X$, $S(\eta)$, $M_{\g,\eta}$ 
are simply-connected, which uses that $a,b$ are coprime.
\[
\xymatrix{
H^i(X)\ar[r]^{\cup\,e(\g)}\ar[d]^{\pi_\eta^\ast}&
H^{i+2}(X)\ar[r]^{\pi_\g^\ast}\ar[d]^{\pi_\eta^\ast}&
H^{i+2}(S(\g))\ar[d]^{{\pi_{\hat\eta}}^\ast}\ar[r]&
H^{i+1}(X)=0\\
H^i(S(\eta))\ar[r]^{\cup\,e(\hat\g)}&
H^{i+2}(S(\eta))\ar[r]^{{\pi_{\hat\g}}^\ast}&
H^{i+2}(M_{\g,\eta})\ar[r]&0&
}
\]
Also the
commutativity of the rightmost square implies that ${\pi_{\hat\g}}^\ast$
is onto.

Let ${\bf x}$, ${\bf y}$, ${\bf z}$ be the basis in
$H^2(S(\eta))$ corresponding to the chosen basis in $H^2(X)=\mathbb Z^3$;
thus $\pi_\eta^\ast e(\xi)=q{\bf y}+r{\bf z}$, and
$e(\hat\g)=a{\bf x}+b{\bf y}$ which
is primitive as $a,b$ are coprime. 
Another basis in $H^2(S(\eta))$ is $a{\bf x}+b{\bf y}$, 
$-m{\bf x}+n{\bf y}$, ${\bf z}$ where $n,m$ are integers with $an+bm=1$.
Thus 
$H^2(M_{\g,\eta})$ is isomorphic to $\mathbb Z^2$ generated by 
${\bf u}:=\pi_{\hat\g}^\ast ({\bf z})$ and
${\bf w}:=\pi_{\hat\g}^\ast (-m{\bf x}+n{\bf y})$. In particular,
$\pi_{\hat\g}^\ast$ maps ${\bf y}$ to $a{\bf w}$
because $-am{\bf x}+an{\bf y}={\bf y}-m(a{\bf x}+b{\bf y})$,
and similarly $\pi_{\hat\g}^\ast({\bf x})=-b{\bf w}$, even though we
do not use it.

The cup squares of ${\bf x}^2$, ${\bf y}^2$, ${\bf z}^2$ vanish
because the $S^2$-factors of $X$ have trivial self-intersection
numbers when computed in some $S^2\times S^2$-factor of $X$.
Now $\pi_\eta^\ast e(\xi)=q{\bf y}+r{\bf z}$ implies
$\pi_\eta^\ast p_1(\xi)=\pi_\eta^\ast e(\xi)^2=(2qr){\bf y}{\bf z}$, hence 
\[
p_1(M_{\g,\eta})=\pi_{\hat\eta}^\ast\pi_\g^\ast p_1(\xi)=
\pi_{\hat\g}^\ast\pi_\eta^\ast\,p_1(\xi)=
(2qra){\bf w}{\bf u}.
\]
The basis ${\bf z}(a{\bf x}+b{\bf y})$, 
${\bf z}(-m{\bf x}+n{\bf y})$, 
${\bf x}{\bf y}=(a{\bf x}+b{\bf y})(n{\bf y}+m{\bf x})$ 
in $H^4(S(\eta))$ is projected to $0$, ${\bf w}{\bf u}$, $0$
by ${\hat\pi_\g}^\ast$, in particular,
${\bf w}{\bf u}$ generates $H^{4}(M_{\g,\eta})$.
It follows that for any fixed $a,b$ by varying $q$, $r$,
we get (by the topological invariance of rational
Pontryagin classes) that the manifolds $M_{\g,\eta}$ lie in
infinitely many homeomorphism types.

We show that the manifolds $D(\tilde\z)$ lie in 
finitely many diffeomorphism types by applying 
Proposition~\ref{prop: disk bundles pullback}
for $(\a,\b)=(\hat\z,\hat\eta)$. 
To see it applies note that 
$p_1(\z\oplus\eta)=p_1(\mu)+p_1(\xi)=e(\mu)^2+e(\xi)^2$, so
$p_1(\hat\z\oplus\hat\eta)=(-2qr+2qr){\bf y}{\bf z}=0$. 
It remains to check that $S(\hat\eta)$, $S(\hat\z)$ 
lie in finitely many fiber homotopy types.

If an oriented $S^3$-fibration over a 
finite complex $Z$
has a section, which is true for
$S(\hat\eta)$, $S(\hat\z)$, then  
it is classified by a map $[Z, BSG_4]$
that factor through $BSF_3$~\cite[Proposition 9.20(i)]{Ran-book}. 
Since $SF_3$ is a component of $\Omega^3S^3$, the space
$BSF_3$ is rationally contractible, so $[Z, BSF_3]$ is finite.
Thus for all choices of parameters $a, b, q, r$ the $S^3$-fibrations
$S(\hat\eta)$, $S(\hat\z)$ lie in finitely many fiber homotopy types; 
in particular, $M_{\g,\eta}$ lie in finitely many homotopy types. 
\end{proof}

\begin{rmk}
It is instructive 
to see why the argument at the end of the proof 
fails for oriented $S^2$-fibrations with a section: 
the classifying map in $[Z, BSG_3]$ only factors through
$BSF_2$ and the inclusion $BSF_2\to BSG_3$ is rationally 
equivalent to $BSO_2\to BSO_3$~\cite{Han}
%Bull. London Math. Soc. Hansen 15 (4): 360
while $[Z, BSO_2]\to [Z,BSO_3]$
has infinite image for $Z=S^2\times S^2$ corresponding to
classifying maps in $[Z, BSO_2]$ of
circle bundle with nonzero $e$ and $p_1$. This is the reason
we have to assume $\z$ has rank $\ge 4$. Similarly, in
Theorem~\ref{thm: codim 4 souls over S4}  
we assume $\xi_m^n$ has rank $n\ge 4$ because
$S^2$-bundles over $S^4$ with structure group $SO_3$ lie in infinitely
many fiber homotopy classes; indeed
the inclusion $BSO_3\to BSG_3$ is a rational
isomorphism~\cite{Han}, and $\pi_4(BSO_3)=\mathbb Z$.
\end{rmk}

\begin{rmk} 
\label{rmk: kpt with nonzero Euler class}
In view of Remark~\ref{rmk: normal euler cl} 
one wants to have a version
of Theorem~\ref{thm: kpt modified} for which the normal
Euler class to the soul is nontrivial. As in~\cite{KPT} this is achieved
by modifying the above proof to work for
$\z$ equal to the Whitney sum of the line bundles over $X$ with 
Euler classes $(0,-q, r)$ and $(0,c,c)$ where $c, q, r$ are nonzero
integers, $c$ is fixed, and $r=q+1$. Indeed,  
\[
e(\hat\z)=
(-q{\bf y}+r{\bf z})(c{\bf y}+c{\bf z})=
c(r-q){\bf y}{\bf z}=c\,{\bf y}{\bf z},
\] so since the Euler class 
determines an oriented spherical fibration up to finite ambiguity, 
there are finitely many fiber homotopy possibilities for $S(\hat\z)$. 
Now 
\[
p_1(\hat\z\oplus\hat\eta)=p_1(\hat\z)+p_1(\hat\eta)=
(2c-2qr+2qr){\bf y}{\bf z}=2c\,{\bf y}{\bf z},
\] 
so $\pi_{\hat\eta}^\ast(p_1(\hat\z\oplus\hat\eta))$
is constant, hence $D(\tilde\z)$ lie in finitely many 
diffeomorphism types. The rest of the proof is the same.
Finally, note that the normal bundle to the soul
has nonzero Euler class: 
$e(\tilde\z)=\pi_{\hat\eta}^\ast(c\,{\bf y}{\bf z})=ca\,{\bf w}{\bf u}$.  
\end{rmk}

\begin{rmk}
More examples of manifolds with
infinitely many souls can be obtained from 
Theorems~\ref{thm: codim 4 souls over S4}--\ref{thm: kpt modified} 
by taking products with suitable complete nonnegatively 
curved manifold $L$.
The only point we have to check is that the souls in the product
are pairwise non-homeomorphic, which is true e.g. if the soul of $L$
has trivial first Pontryagin class; then the souls in the product
are not homeomorphic because their $p_1$'s are different 
integers multiples of primitive elements, and this property
is preserved under any isomorphism of their
$4$th cohomology groups.
\end{rmk}

\begin{prob}
Find a manifold $N$ with a infinite sequence of complete metrics 
$g_k$ of $\sec\ge 0$ satisfying one of the following:\newline
\textup{(i)} souls of $(N, g_k)$ are pairwise non-diffeomorphic 
and have codimension $\le 3$;\newline
\textup{(ii)} souls of $(N, g_k)$ are all diffeomorphic 
while the pairs $(N,\,\textup{soul of}\ g_k)$ are pairwise non-diffeomorphic.
%\newline$\quad\text{(iv})$
\end{prob}

Examples as in (ii) only without nonnegatively curved metrics
can be found in~\cite[Appendix A]{BK-Mathann}. 
%in codimension $1$ such examples do not exist essentially 
%because there are only finitely many real line bundles
%over a fixed compact manifold.

\begin{prob}
Find a manifold $N$ with two complete metrics 
of $\sec\ge 0$ whose souls $S$, $S^\prime$
are diffeomorphic and have codimension $\le 3$, while
the pairs $(N,S)$, $(N, S^\prime)$ are not diffeomorphic.
\end{prob}

\section{Vector bundles with diffeomorphic total spaces}
\label{sec: vb with diffeo tot spaces}

One of the things we are unable to do in this paper is 
construct a manifold that admits metrics with 
infinitely many nondiffeomorphic souls of codimension $\le 3$. To get an idea
what such a manifold could look like, in this section we systematically
study vector bundles with diffeomorphic total spaces, especially those
of rank $\le 3$.

Throughout this section $N$ is the total space of
vector bundles $\xi$, $\eta$ over closed 
manifolds $B_\xi$, $B_\eta$, respectively. 
Composing the zero section of $\xi$ with
the projection of $\eta$ gives
a canonical homotopy equivalence $f_{\xi,\eta}\co B_\xi\to B_\eta$.

The map $f_{\xi,\eta}$ 
pulls $TN\vert_{B_\eta}$ to $TN\vert_{B_\xi}$ 
because the projection of $N\to B_\eta\hookrightarrow N$ 
is homotopic to ${\bf id}(N)$.

Any homotopy equivalence  of 
closed manifold preserve Stiefel-Whitney classes, 
as follows from their definition 
via Steenrod squares, so
$f_{\xi,\eta\,}^\ast w(TB_\eta)= w(TB_\xi)$. Therefore,
the Whitney sum formula implies that $f_{\xi,\eta}$ 
also pulls back the normal total Stiefel-Whitney class $w$, i.e.
$f_{\xi,\eta\,}^\ast w(\eta)\cong w(\xi)$.
In fact, Stiefel-Whitney classes of a vector bundle
depend on the fiber homotopy
type of its sphere bundle. To this end we show:

\begin{prop} \label{prop: normal bundles are fiber homotopic}
There is a fiber homotopy equivalence 
$S(f_{\xi,\eta}^\#\eta)\cong S(\xi)$.
\end{prop}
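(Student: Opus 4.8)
The plan is to identify each of the two sphere bundles, up to fibre homotopy equivalence, with a neighbourhood of the single end of $N$, and then exploit that the ``$\xi$-end'' and ``$\eta$-end'' filtrations of $N$ are interleaved. Fix fibre metrics, with norm functions $\rho_\xi,\rho_\eta\colon N\to[0,\infty)$; let $\pi_\xi\colon N\to B_\xi$, $\pi_\eta\colon N\to B_\eta$ be the projections, $s_\xi,s_\eta$ the zero sections, and set $g:=f_{\eta,\xi}=\pi_\xi\circ s_\eta$. Two elementary facts will be used. \emph{(a)} For $T>0$ the map $v\mapsto(Tv/\rho_\xi(v),\rho_\xi(v))$ identifies the end $N_\xi^{\ge T}:=\rho_\xi^{-1}[T,\infty)$ with $S_T(\xi)\times[T,\infty)$, so fibrewise radial projection deformation retracts $N_\xi^{\ge T}$ onto the radius-$T$ sphere bundle over $B_\xi$; hence $S(\xi)$, $S_T(\xi)$ and $N_\xi^{\ge T}$ are fibre homotopy equivalent as fibrations over $B_\xi$, and likewise for $\eta$. \emph{(b)} Radially contracting the $\eta$-fibres to the zero section gives $s_\eta\circ\pi_\eta\simeq\mathrm{id}_N$, whence $g\circ\pi_\eta=\pi_\xi\circ s_\eta\circ\pi_\eta\simeq\pi_\xi$ and $g\circ f_{\xi,\eta}=\pi_\xi\circ(s_\eta\circ\pi_\eta)\circ s_\xi\simeq\pi_\xi\circ s_\xi=\mathrm{id}_{B_\xi}$.

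Since $B_\xi$ and $B_\eta$ are compact subsets of the open manifold $N$, each disc bundle $\rho_\xi^{-1}[0,T]$ lies in some $\rho_\eta^{-1}[0,S]$ and conversely, so one can choose reals $S_0<S_1$ and $T_1<T_2$ with
\[
N_\eta^{\ge S_0}\ \supseteq\ N_\xi^{\ge T_1}\ \supseteq\ N_\eta^{\ge S_1}\ \supseteq\ N_\xi^{\ge T_2}.
\]
The same-norm inclusions $N_\eta^{\ge S_1}\hookrightarrow N_\eta^{\ge S_0}$ and $N_\xi^{\ge T_2}\hookrightarrow N_\xi^{\ge T_1}$ are homotopy equivalences by \emph{(a)}; since they sandwich the inclusion $\iota\colon N_\eta^{\ge S_1}\hookrightarrow N_\xi^{\ge T_1}$ on either side, $\iota$ acquires both a left and a right homotopy inverse and is therefore itself a homotopy equivalence. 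By \emph{(b)} one has $\pi_\xi\circ\iota\simeq g\circ\pi_\eta$, so after a homotopy $\iota$ becomes a fibrewise map covering $g$; being a homotopy equivalence of total spaces over the homotopy equivalence $g$ of connected CW bases, it is a fibre homotopy equivalence over $g$ (pull back along $g$ to a common base over $B_\eta$ and apply Dold's criterion).

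Assembling the chain
\[
S(\eta)\ \cong\ N_\eta^{\ge S_1}\ \xrightarrow{\ \iota\ }\ N_\xi^{\ge T_1}\ \cong\ S(\xi),
\]
with the outer fibre homotopy equivalences over $B_\eta$, $B_\xi$ from \emph{(a)} and the middle one over $g$, yields a fibre homotopy equivalence $g^\#S(\xi)\cong S(\eta)$ over $B_\eta$. Pulling back along $f_{\xi,\eta}$ and using $g\circ f_{\xi,\eta}\simeq\mathrm{id}_{B_\xi}$ from \emph{(b)}, we obtain $S(\xi)\cong(g\circ f_{\xi,\eta})^\#S(\xi)=f_{\xi,\eta}^\#\bigl(g^\#S(\xi)\bigr)\cong f_{\xi,\eta}^\#S(\eta)=S(f_{\xi,\eta}^\#\eta)$, which is the assertion.

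The step I expect to be the main obstacle is the middle paragraph: verifying that $\iota$ is a homotopy equivalence genuinely requires the four-term interleaving rather than a single containment, and upgrading the homotopy commutativity $\pi_\xi\circ\iota\simeq g\circ\pi_\eta$ to an honest fibre homotopy equivalence over $g$ needs $\pi_\xi$ and $\pi_\eta$ to be (Hurewicz) fibrations over the relevant bases and care that the homotopy from \emph{(b)}, defined on all of $N$, restricts appropriately to the end-neighbourhoods in use. Everything else is bookkeeping with radial retractions together with the compactness of the souls inside $N$.
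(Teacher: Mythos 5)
Your proof is correct, and it follows a genuinely different route from the paper's. The paper shows that the map covering $f_{\xi,\eta}$ (the radial projection $p\vert_{S_\rho(\xi)}$) is a homotopy equivalence by invoking Lemma~\ref{lem: s1-bundles are hcobordant}: the regions between nested sphere bundles are h-cobordisms, and $S_\rho(\xi)$ includes into the trivial h-cobordism $D_R(\eta)\setminus\mathring{D}_r(\eta)$ as an end. That lemma in turn rests on Poincar\'e duality and the non-simply-connected Whitehead theorem. You instead establish that the inclusion $\iota$ is a homotopy equivalence by the elementary sandwiching trick: interleave four end neighbourhoods, observe that the two same-bundle inclusions are homotopy equivalences by radial retraction, and then use the ``$gf$ and $hg$ equivalences imply $g$ equivalence'' two-sided inverse argument. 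This bypasses the h-cobordism machinery entirely, at the cost of carrying an extra term in the nesting and then doing the explicit homotopy-lifting step to turn $\iota$ into a fibrewise map before invoking Dold. The paper's route is economical in context because Lemma~\ref{lem: s1-bundles are hcobordant} is needed elsewhere anyway (Propositions~\ref{prop: guijarro modification}, \ref{prop: h.e. on bases has trivial norm inv}, \ref{prop: finite ambig}); as a standalone proof of this Proposition, your version is strictly more elementary. Both proofs hinge on Dold's criterion in the same way, and your observation that the four-term interleaving is the genuinely necessary point is accurate — the paper also needs a four-term nesting, just hidden inside the proof of Lemma~\ref{lem: s1-bundles are hcobordant}.
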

 
It follows that $f_{\xi,\eta}$ pulls back the  
normal Euler classes
%: $f_{\xi,\eta}^\ast e(\eta)=e(\xi)$ with 
(with any local coefficients).

\begin{proof}[Proof of Proposition~\ref{prop: normal bundles are fiber homotopic}]
Use some metric on the fibers to
choose tubular neighborhoods $D_r(\eta)$, $D_\r(\xi)$, $D_R(\eta)$ 
of the zero sections of $\eta$, $\xi$, $\eta$, respectively such that 
$D_r(\eta)\Subset D_\r(\xi)\Subset D_R(\eta)$.
In the commutative diagram below unlabeled arrows are
either inclusions or sphere/disk bundle projections, $p$ is the
obvious retraction along radial lines, and $p(S_\r(\xi))\subset S_r(\eta)$
because of the above inclusions of disk bundles. 
\[
\xymatrix{
B_\xi\ar[r] & D_\r(\xi)\ar[r]&
D_R(\eta)\ar[r]_{p} & D_r(\eta)\ar[r]& B_\eta\\
& S_\r(\xi)\ar[u]\ar[ul]\ar[rr]^{p\vert_{S(\xi)}}&& S_r(\eta)\ar[u]\ar[ur]
}
\]
The composition of top arrows is $f_{\xi,\eta}$, which
by commutativity is covered by $p\vert_{S_\r(\xi)}$. 
By a criterion in~\cite[Theorem 6.1]{Dol} to show that $p\vert_{S_\r(\xi)}$
induces a fiber homotopy equivalence of $S_\r(\xi)$ and
the pullback of $S_r(\eta)$ via $f_{\xi,\eta}$, it is enough to
check that $p\vert_{S_\r(\xi)}$ is a
homotopy equivalence.
Lemma~\ref{lem: s1-bundles are hcobordant} below implies that
$W_R:=D_R(\eta)\setminus{\mathring D}_\r(\xi)$ 
and $W_r:=D_\r(\xi)\setminus{\mathring D}_r(\eta)$ 
are h-cobordisms with ends $S_R(\eta)$, $S_\r(\xi)$ and
$S_\r(\xi)$, $S_r(\eta)$, respectively. 
Therefore, the inclusion of $S_\r(\xi)$ 
into the trivial h-cobordism
$W:=W_R\cup W_r=D_R(\eta)\setminus{\mathring D}_r(\eta)$
is a homotopy equivalence, and so is $p\vert_{W}\co W\to S_r(\eta)$,
hence $p\vert_{W_r}$ defines a deformation retraction
$D_\r(\xi)\to D_r(\eta)$ that restricts to the homotopy
equivalence $p\vert_{S_\r(\xi)}\co S_\r(\xi)\to S_r(\eta)$.
\end{proof}

\begin{cor}\label{cor: codim 2 and tang hom eq}
If $\xi$ has rank $i\in\{1,2\}$, then $f_{\xi,\eta}^\# \eta\cong\xi$,
and $f_{\xi,\eta}$ is tangential.
\end{cor}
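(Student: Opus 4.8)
The plan is to deduce the isomorphism $f_{\xi,\eta}^\#\eta\cong\xi$ from the classification of low-rank real vector bundles by characteristic classes, and then to read off tangentiality by a $KO$-theoretic cancellation.

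First I would record that, since $f_{\xi,\eta}\co B_\xi\to B_\eta$ is a homotopy equivalence of closed manifolds, $B_\xi$ and $B_\eta$ have the same dimension (the top degree in which $\mathbb{Z}/2$-cohomology is nonzero), so $\eta$ has the same rank $i$ as $\xi$, namely $i\in\{1,2\}$. For such $i$ the space $BO(i)$ has a very short Postnikov tower: $BO(1)=K(\mathbb{Z}/2,1)$, and $\pi_k(BO(2))=\pi_{k-1}(SO(2))$ vanishes for $k\ge 3$; consequently a rank-$2$ real bundle over a CW complex is determined by its first Stiefel--Whitney class $w_1$ together with one secondary invariant, the twisted Euler class in $H^2(-;\mathbb{Z}_{w_1})$ obstructing a lift of the $w_1$-map through $BSO(2)\to BO(2)\to K(\mathbb{Z}/2,1)$. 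The text preceding the corollary already records that $f_{\xi,\eta}^\ast w(\eta)=w(\xi)$ and, via Proposition~\ref{prop: normal bundles are fiber homotopic}, that $f_{\xi,\eta}$ pulls the normal Euler class of $\eta$ back to that of $\xi$ with arbitrary local coefficients. Feeding these two inputs into the classification gives $f_{\xi,\eta}^\#\eta\cong\xi$: when $i=1$ only $w_1$ is involved, and when $i=2$ both invariants are controlled.

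For tangentiality I would use that $f_{\xi,\eta}$ pulls $TN\vert_{B_\eta}=TB_\eta\oplus\eta$ back to $TN\vert_{B_\xi}=TB_\xi\oplus\xi$; substituting $f_{\xi,\eta}^\#\eta\cong\xi$ yields $f_{\xi,\eta}^\#TB_\eta\oplus\xi\cong TB_\xi\oplus\xi$, so that $[f_{\xi,\eta}^\#TB_\eta]=[TB_\xi]$ in $\widetilde{KO}(B_\xi)$, i.e. $f_{\xi,\eta}$ is tangential.

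The step needing the most care is precisely this cancellation: it is legitimate only \emph{stably}, so ``tangential'' must be read as ``the stable tangent bundles correspond'' (the notion used in the surgery arguments later in the paper); unstable cancellation of the rank-$i$ summand $\xi$ from rank-$(\dim B_\xi)$ bundles over a $(\dim B_\xi)$-manifold can genuinely fail, the obstruction being an Euler-type class in $H^{\dim B_\xi}$. A second, minor point to dispatch is that the Euler class of a rank-$2$ bundle over an orientable base is only well defined up to sign (a fiber homotopy equivalence of circle bundles may reverse fiber orientations), but this changes neither the underlying vector bundle nor anything used afterwards, so it is harmless; I would also double-check that the twisted Euler class really is, together with $w_1$, a complete invariant of a rank-$2$ bundle, which is exactly the shortness of the Postnikov tower of $BO(2)$ noted above.
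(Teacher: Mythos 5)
Your argument is correct, and for the first claim it takes a genuinely different, more computational route than the paper's. The paper observes that $O_i\to G_i$ is a homotopy equivalence for $i\le 2$, so the fiber homotopy equivalence $S(f_{\xi,\eta}^\#\eta)\simeq S(\xi)$ supplied by Proposition~\ref{prop: normal bundles are fiber homotopic} is automatically induced by an isomorphism of vector bundles — no characteristic classes needed. You instead invoke the classification of rank $\le 2$ real bundles by $w_1$ together with the twisted Euler class (the Postnikov truncation of $BO(2)$), and then the invariance of those classes under $f_{\xi,\eta}$. Both routes rest on Proposition~\ref{prop: normal bundles are fiber homotopic}; the paper's is shorter and avoids obstruction theory, while yours is more explicit and self-contained for a reader unacquainted with the monoids $G_i$. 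For tangentiality the two proofs are essentially identical, and your caveat that the cancellation of $\xi$ is legitimate only stably — so that ``tangential'' must mean an equality of stable tangent bundles, i.e.\ in $\widetilde{KO}(B_\xi)$ — is exactly what the paper intends when it writes ``pulls back stable tangent bundles.'' Your dispatch of the sign ambiguity in the Euler class is also correct: reversing the fiber orientation changes the sign of $e$ but not the underlying unoriented bundle.
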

\begin{proof}
%[Proof of Corollary~\ref{cor: codim 2 and tang hom eq}]
Since $O_i\to G_i$ is a homotopy equivalence, 
the fiber homotopy equivalence of $f_{\xi,\eta}^\# S(\eta)$
and $S(\xi)$ is induced by an isomorphism of 
$f_{\xi,\eta}^\# \eta\cong\xi$. Thus
$\xi\oplus TB_\xi=TN\vert_{B_\xi}=f_{\xi,\eta}^\# TN\vert_{B_\eta}=
f_{\xi,\eta}^\# (\eta\oplus TB_\eta)\cong
\xi\oplus f_{\xi,\eta}^\# TB_\eta$. Subtracting $\xi$
we see that $f_{\xi,\eta}$ 
pulls back stable tangent bundles.
\end{proof}

In codimension $3$ all we can say is that $f_{\xi,\eta}$ pulls
back rational Pontryagin classes of normal and tangent bundles;
recall that a stable vector bundle is determined by its rational
Pontryagin classes up to finite ambiguity. 

\begin{prop} \label{prop: codim 3 and pontr class}
If $\xi$ has rank $3$, and $p$ denotes the rational total Pontryagin class, 
then  $f_{\xi,\eta\,}^\ast p(\eta)\cong p(\xi)$ and
$f_{\xi,\eta\,}^\ast p(TB_\eta)\cong p(TB_\xi)$.
\end{prop}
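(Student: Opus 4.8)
The plan is to reduce the rank-$3$ statement to the already-established fiber homotopy invariance (Proposition~\ref{prop: normal bundles are fiber homotopic}) plus the basic fact that rational Pontryagin classes of a vector bundle are detected by the fiber homotopy type of its sphere bundle once we pass to the total space. First I would invoke Proposition~\ref{prop: normal bundles are fiber homotopic} to get a fiber homotopy equivalence $S(f_{\xi,\eta}^\#\eta)\cong S(\xi)$ of $S^2$-bundles over $B_\xi$. Unlike the rank $\le 2$ case (Corollary~\ref{cor: codim 2 and tang hom eq}), here $O_3\to G_3$ is not a homotopy equivalence, so we cannot conclude $f_{\xi,\eta}^\#\eta\cong\xi$ as vector bundles. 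However, the inclusion $BSO_3\to BSG_3$ (equivalently the relevant piece of $BO_3\to BG_3$) is a rational isomorphism — this is the fact attributed to Hansen~\cite{Han} and used repeatedly in Section~\ref{sec: souls of codim 4} — so fiber homotopy equivalent rank-$3$ bundles have the same rational Pontryagin classes. Applied to $f_{\xi,\eta}^\#\eta$ and $\xi$, and using naturality $f_{\xi,\eta}^\ast p(\eta)=p(f_{\xi,\eta}^\#\eta)$, this gives $f_{\xi,\eta\,}^\ast p(\eta)=p(\xi)$, the first claim.

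For the tangential statement, I would use the same tangent-bundle bookkeeping as in the proof of Corollary~\ref{cor: codim 2 and tang hom eq}: since the projection $N\to B_\eta\hookrightarrow N$ is homotopic to the identity, $f_{\xi,\eta}$ pulls $TN|_{B_\eta}$ back to $TN|_{B_\xi}$, i.e.
\[
\xi\oplus TB_\xi \;=\; TN|_{B_\xi} \;=\; f_{\xi,\eta}^\#\bigl(\eta\oplus TB_\eta\bigr) \;=\; f_{\xi,\eta}^\#\eta \;\oplus\; f_{\xi,\eta}^\# TB_\eta
\]
stably. Taking total rational Pontryagin classes and using the Whitney formula, $p(\xi)\,f_{\xi,\eta}^\ast p(TB_\eta) = p(f_{\xi,\eta}^\#\eta)\,p(f_{\xi,\eta}^\ast TB_\eta)$. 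Since $p(\xi)=p(f_{\xi,\eta}^\#\eta)$ from the first part, and $p(\xi)$ is a unit in $H^\ast(B_\xi;\mathbb Q)$ (its degree-zero term is $1$), we may cancel it and obtain $f_{\xi,\eta\,}^\ast p(TB_\eta)=p(TB_\xi)$. The concluding remark that a stable bundle is pinned down by its rational Pontryagin classes up to finite ambiguity is standard (finiteness of the relevant homotopy groups after tensoring with $\mathbb Q$) and is exactly the same input used in Proposition~\ref{prop: disk bundles pullback}.

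The only real subtlety — and the step I would be most careful about — is the rational equivalence $BSO_3\to BSG_3$ (or its unoriented analogue), since everything downstream is a formal consequence of it. Here I should note that $SG_3$ is (a component of) $\Omega^3 S^3$ wedged appropriately, so $BSG_3$ has the rational homotopy type governed by $\pi_\ast(\Omega^3S^3)\otimes\mathbb Q$, which in the relevant range is $\mathbb Q$ in degree $4$ matching $\pi_4(BSO_3)=\mathbb Z$; this is precisely the computation cited from Hansen. One should also keep track of orientations: if $\xi$ or $\eta$ is non-orientable one works with $BO_3\to BG_3$ instead, but rational Pontryagin classes are insensitive to the $w_1$-twisting, so the argument is unaffected. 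No convergence or geometric input is needed — the proposition is purely a consequence of bundle theory and the already-proved Proposition~\ref{prop: normal bundles are fiber homotopic}.
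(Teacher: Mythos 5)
Your proposal matches the paper's proof in both stages: Proposition~\ref{prop: normal bundles are fiber homotopic} supplies the fiber homotopy equivalence of sphere bundles, the rational isomorphism $BSO_3\to BSG_3$ (Hansen) forces fiber homotopy invariance of $p_1$ for rank-$3$ bundles (with higher Pontryagin classes vanishing since $H^\ast(BSO_3;\mathbb Q)\cong\mathbb Q[p_1]$), and the tangential claim is exactly the Whitney-sum bookkeeping applied to $f_{\xi,\eta}^\# TN|_{B_\eta}\cong TN|_{B_\xi}$. The only spot where you are looser than the paper is the passage to the unoriented case: the paper isolates this as Lemma~\ref{lem: p_1 is fiber homotopy inv} and proves that $Bj^\ast\co H^4(BG_3;\mathbb Q)\to H^4(BO_3;\mathbb Q)$ hits $p_1$ by comparing the $2$-fold covers associated to $w_1$ and using that $Bj_1^\ast$ is a $\mathbb Z_2$-equivariant isomorphism onto the subspace fixed by the covering involution, which is the rigorous version of your remark that rational Pontryagin classes are insensitive to the $w_1$-twisting.
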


\begin{proof}
%[Proof of Proposition~\ref{prop: codim 3 and pontr class}]
By Proposition~\ref{prop: normal bundles are fiber homotopic}, 
and Lemma~\ref{lem: p_1 is fiber homotopy inv} below, 
$f_{\xi,\eta\,}^\ast p_1(\eta)\cong p_1(\xi)$, 
while the higher Pontryagin classes vanish as 
$H^\ast(BSO_3;\mathbb Q)\cong{\mathbb Q}[p_1]$.
Now
$f_{\xi,\eta}^\# TN\vert_{B_\eta}\cong TN\vert_{B_\xi}$
and the Whitney sum formula gives
$f_{\xi,\eta\,}^\ast p(TB_\eta)\cong p(TB_\xi)$.
\end{proof}

\begin{prop}\label{prop: h.e. on bases has trivial norm inv}
If $\xi$ has rank $2$, then $f_{\xi,\eta}^\# \eta\cong\xi$ and
$f_{\xi,\eta}$ has trivial normal invariant.
\end{prop}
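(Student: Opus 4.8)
The plan is to produce an explicit stable vector bundle isomorphism $f^\#\nu_{B_\eta}\cong\nu_{B_\xi}$ — where $\nu$ denotes stable normal bundles and $f:=f_{\xi,\eta}$ — and to check that it realizes the fiber homotopy equivalence of normal bundles that $f$ automatically carries; since the normal invariant in $[B_\eta,G/O]$ is precisely the obstruction to finding such an isomorphism, it will then vanish. The existence of the bundle isomorphism is equivalent to $f$ being tangential (cancel $\xi$ below), which is the ``in particular''; the point of the full statement is that realizing the \emph{fiber homotopy} data needs more, because an automorphism of a vector bundle can act nontrivially on its Thom space, so a priori $\theta(f)$ could be a nonzero element of the kernel of $[B_\eta,G/O]\to[B_\eta,BO]$.

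First I would record the bundle identities on $N$. Writing $\pi_\xi\co N\to B_\xi$, $\pi_\eta\co N\to B_\eta$ for the two bundle projections and $s_\xi$, $s_\eta$ for the zero sections, the tangent bundle of a total space gives $\tau_N\cong\pi_\xi^\#(\tau_{B_\xi}\oplus\xi)\cong\pi_\eta^\#(\tau_{B_\eta}\oplus\eta)$, and restricting to the zero sections yields $\nu_N|_{B_\xi}\oplus\xi\cong\nu_{B_\xi}$ and $\nu_N|_{B_\eta}\oplus\eta\cong\nu_{B_\eta}$ stably. Since $f=\pi_\eta{\tinycirc}s_\xi$, pulling back along $f$ produces the chain of stable isomorphisms
\[
f^\#\nu_{B_\eta}\;\cong\;s_\xi^\#(\nu_N\oplus\pi_\eta^\#\eta)\;\cong\;\nu_N|_{B_\xi}\oplus f^\#\eta\;\cong\;(\nu_{B_\xi}\ominus\xi)\oplus\xi\;\cong\;\nu_{B_\xi},
\]
the third step using the hypothesis $f^\#\eta\cong\xi$; call the resulting isomorphism $\Phi$. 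Cancelling $\xi$ from $f^\#(\tau_{B_\eta}\oplus\eta)\cong\tau_{B_\xi}\oplus\xi$ reproves tangentiality directly.

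It remains to see that $\Phi$ induces, on sphere bundles, the fiber homotopy equivalence $S(f^\#\nu_{B_\eta})\simeq S(\nu_{B_\xi})$ coming from Spivak uniqueness for the homotopy equivalence $f$; granted this, $\theta(f)=0$ by the geometric definition of the normal invariant (\cite{Wal-book}, after Lemma~10.6). To verify it I would embed $N$ in a large $\mathbb R^M$, so that $B_\xi$ and $B_\eta$ inherit normal bundles $\nu_N|_{B_\xi}\oplus\xi$ and $\nu_N|_{B_\eta}\oplus\eta$ whose Thom--Pontryagin collapse maps $S^M\to T(\,\cdot\,)$ both factor through the collapse $S^M\to T(\nu_N)$ of $N$. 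The two factoring maps out of $T(\nu_N)$ are collapses onto tubular neighborhoods of the submanifolds $B_\xi$, resp.\ $B_\eta$, \emph{of $N$}; because $B_\xi$ and $B_\eta$ are the zero sections of the two vector bundle structures on the single manifold $N$, these tubular neighborhoods are deformation retracts of $N$, and the bundle map underlying $\Phi$ — built from the two descriptions of $\tau_N$ together with $f^\#\eta\cong\xi$, i.e.\ induced by the fiberwise-linear map $D(\xi)\to D(\eta)$ over $f$ that the isomorphism $f^\#\eta\cong\xi$ provides — intertwines the two collapses up to homotopy. I expect this last compatibility to be the main obstacle: one has to be sure that ``collapse onto a tubular neighborhood of the zero section'' is carried, up to homotopy, by that fiberwise-linear map, and that $\Phi$ is the comparison of normal data it induces rather than a twist of it. As in Remark~\ref{rmk: fiberwise cone [Z, F]}, this is exactly where the geometric definition of the normal invariant does the work, now producing an honest vanishing rather than a finite-ambiguity bound.
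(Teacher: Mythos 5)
Your proposal correctly isolates the subtlety — tangentiality is cheap, and the real content is that the stable bundle isomorphism one writes down realizes the Spivak fiber homotopy equivalence — but it stops exactly at that point: you say yourself you ``expect this last compatibility to be the main obstacle,'' and you do not resolve it. That obstacle is not a formality. The fiberwise-linear map $D(\xi)\to D(\eta)$ over $f$ that the isomorphism $f^\#\eta\cong\xi$ supplies is an \emph{abstract} bundle map; the collapse maps out of $T(\nu_N)$ onto the Thom spaces of the normal bundles of $B_\xi$ and $B_\eta$ are governed by how the tubular neighborhoods actually \emph{sit inside $N$}. There is no a priori reason the abstract bundle map intertwines the two geometric collapses, and asserting that it does is essentially equivalent to the proposition you are trying to prove.

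The paper supplies precisely the missing comparison, by a different route. It nests $D(\xi)\Supset D(\eta)\Supset B_\xi$ and uses Lemma~\ref{lem: s1-bundles are hcobordant} to see that the region $D(\xi)\setminus\mathring D(\eta)$ is an h-cobordism, hence there is a deformation retraction $r\co D(\xi)\to D(\eta)$. This $r$ has trivial normal invariant for a concrete reason: $D(\xi)\times I$, viewed as a bordism between $D(\xi)$ and $D(\eta)$, together with the composite of the coordinate projection and $r$, is an explicit normal bordism from $r$ to the identity. Meanwhile the hypothesis $f^\#\eta\cong\xi$ gives a diffeomorphism $h\co D(f^\#\eta)\to D(\xi)$ over the identity of $B_\xi$. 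The key step — the geometric input your sketch lacks — is a homotopy $\hat f_{\xi,\eta}\simeq r\circ h$, built by observing that both maps, restricted to $B_\xi$ and then projected to $B_\eta$, give $f$, so that they glue along $B_\xi\times I$ and the deformation retraction of $D(\xi)\times I$ onto $(B_\xi\times I)\cup(D(\xi)\times\{0,1\})$ supplies the homotopy. Hence $\mathfrak q(\hat f_{\xi,\eta})=\mathfrak q(r\circ h)=\mathfrak q(r)=0$, and Lemma~\ref{lem: norm inv pulls back} transfers this to $\mathfrak q(f_{\xi,\eta})=0$. So your instinct about where the difficulty lives is right, but the argument you sketch does not actually close it; to repair it along your lines you would in effect need to reconstruct the identification $\hat f\simeq r\circ h$ in Thom-space language, at which point the paper's disk-bundle argument is both shorter and more transparent.
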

\begin{proof}
To avoid clutter we set $f:=f_{\xi,\eta}$ and
label associated bundles with a superscript,
e.g., unit disk and sphere bundles for $\xi$ are denoted by $D_\xi$, $S_\xi=\d D_\xi$.
Also let $\e$ denote the pullback of $\eta$ via $f$.

Use metrics on $\xi$, $\eta$ to find their disk bundles
that satisfy $D_\xi\Supset D_\eta\Supset B_\xi$. 
Lemma~\ref{lem: s1-bundles are hcobordant} below implies that
$D_\xi\setminus{\mathring D}_\eta$ is an h-cobordism, so
there exists a deformation retraction 
$r\co D_\xi\to D_\eta$. 
By the proof of Proposition~\ref{prop: normal bundles are fiber homotopic}
the restriction of $r$ to the boundary is a fiber homotopy
equivalence of the sphere bundles $S_\xi\to S_\eta$
covering $f$.
The corresponding homotopy equivalence of
pairs  $r\co (D_\xi, S_\xi)\to (D_\eta, S_\eta)$ has trivial normal invariant
because $D_\xi\times I$ can be thought of as an h-cobordism
with boundaries $D_\xi$, $D_\eta$
(cf.~\cite{Wal-book} before theorem 1.3), and moreover,
the map $D_\xi\times I\to D_\eta$ given by composing
the coordinate projection with $r$ defines a normal bordism
of $r$ and ${\bf id}(D_\eta)$.

Let $\hat f\co (D_{\e}, S_{\e})\to (D_\eta, S_\eta)$ be the homotopy equivalence of pairs
induced by pullback via $f$. 
If $\bar r$ is a homotopy inverse of $r$, then 
$\bar r\circ \hat f\vert_{S_{\e}}\co S_{\e}\to S_\xi$ is a
a fiber homotopy equivalence covering a map homotopic to
the identity of $B_\xi$. Lifting the homotopy to $S_\xi$ and inserting it 
to a collar neighborhood of $\d D_\xi$ we can assume that 
$\bar r\circ \hat f\vert_{S_{\e}}\co S_{\e}\to S_\xi$ covers
the identity map of $B_\xi$.

Any fiber homotopy equivalence of circle fibrations is induced by a linear isomorphism
because $G_2/O_2$ is contractible
(as follows e.g., by applying the $5$-lemma to the inclusion induced map
of the homotopy exact sequences 
of the fibrations $O_2\to S^1$ and $G_2\to S^1$,
and using that the inclusion  $O_1\to F_1$
of their fibers is a homotopy equivalence, cf.~\cite[Corollary 3.5]{MadMil}.)

Since $S_\xi$, $S_\e$ are circle bundles, the fiber homotopy equivalence
$\bar r\circ \hat f\vert_{S_{\e}}$ is induced by a linear isomorphism $\e\to\xi$
covering the identity on $B_\xi$.
The inverse of this isomorphism restricted to disk bundles gives a diffeomorphism $h\co D_\xi\to D_{\e}$
such that $\bar r\circ \hat f\circ h$ can be deformed through boundary-preserving maps
to a homotopy self-equivalence  $q$ of the pair $(D_\xi, S_\xi)$
that is the identity on the union of $S_\xi$ and the 
zero section of $\xi$. The map
$q$ is homotopic to the identity through boundary-preserving maps, where
the homotopy can be defined as follows: let $q_0(x)=x$, and for $t\in (0,1]$
let $q_t(x)=tq(x/t)$ if $|x|\le t$ and $q_t(x)=x$ if $|x|\ge t$. 
Hence $\hat f$ is homotopic (as a map of pairs) to $r\circ h^{-1}$. Therefore, $\hat f$ 
has trivial normal invariant, and hence so does $f$ by Lemma~\ref{lem: norm inv pulls back} below. 
\end{proof}

\begin{rmk}
\label{rmk: connect sum exotic sphere}
Surgery theory implies (see e.g.~\cite[Theorem 13.2]{Ran-book}) that 
if $f\co N\to M$ is
a homotopy equivalence of closed smooth simply-connected manifolds
of dimension $n\ge 5$, then $f$ has trivial normal invariant if and only if
$N$ is diffeomorphic to the connected sum of $M$ and a homotopy sphere
$\Sigma^n$ and $f$ is homotopic to the homeomorphism 
$N\cong M\#\Sigma^n\to M\# S^n\cong M$ where the middle map
is the connected sum of ${\bf id}(M)$ with
a homeomorphism $\Sigma^n\to S^n$. 
Thus Proposition~\ref{prop: h.e. on bases has trivial norm inv} implies 
Theorem~\ref{thm: simply-conn and codim 2}.
\end{rmk}

{\bf Remark.}
Triviality of the normal invariant of $f_{\xi,\eta}$ implies that
$f_{\xi,\eta}$ is tangential, or equivalently that $f_{\xi,\eta}^\# \eta$ and $\xi$ 
are stably isomorphic. On the other hand, the existence of an unstable
isomorphism $f_{\xi,\eta}^\# \eta\cong\xi$ does not imply triviality 
of the normal invariant of $f_{\xi,\eta}$. 
An example is given by any pair of
closed smooth simply-connected manifolds $B_1$, $B_2$ of dimension $m\ge 5$
that are tangentially homotopy equivalent and non-homeomorphic. Here
$B_1\times \R^k$, $B_2\times \R^k$ are diffeomorphic provided $2k\ge m+3$, 
see~\cite{Hae61} and~\cite[Theorem 2.2]{Sie-collar}. Any 
homotopy equivalence of $B_1$ and $B_2$ 
clearly preserves the (unstable) trivial normal bundles, and
if the homotopy equivalence had trivial normal invariant, 
then $B_1$, $B_2$ would be homeomorphic,  as in
Remark~\ref{rmk: connect sum exotic sphere}.

\begin{rmk} 
If a homotopy equivalence of closed manifolds $f\co N\to M$ 
has trivial normal invariant, and if $\a$ is a vector bundle over $M$, 
then by Lemma~\ref{lem: norm inv pulls back}
the induced map ${\hat f}\co D(f^\#\a)\to D(\a)$ of disk bundles
has trivial normal invariant. So
by Wall's $\pi-\pi$-theorem $\hat f$ is homotopic to a diffeomorphism
provided $\dim(D(\a))\ge 6$ and the inclusion $S(\a)\to D(\a)$
is a $\pi_1$-isomorphism. The latter holds if the bundle 
$\a$ has rank $\ge 3$. If the rank of $\a$ is $2$, then 
things are a bit more complicated, and we have partial answers
when $M$ is simply-connected of dimension $\ge 5$. 
Namely, if $\a$ is trivial,
and $\dim(M)\ge 5$, then $N\times\mathbb R^2$ is
diffeomorphic to $M\times\mathbb R^2$ if an only if 
$N$ is diffeomorphic to $M$ (see Remark~\ref{rmk: ko=wh=0}). 
If $\a$ is nontrivial, then $\pi_1(S(\a))$ is a finite cyclic group
$\mathbb Z_d$, and 
%the case when $d=1$ is again handled 
%as before by $\pi-\pi$-theorem so that $\hat f$ is homotopic to a 
%diffeomorphism provided $\dim(D(\a))\ge 6$.
%If $d>1$, 
a surgery-theoretic argument in~\cite[Section 14]{BKS-mod2} shows
that $\hat f$  is homotopic to a diffeomorphism 
except possibly when $d$ is even and $\dim(M)\equiv 1\, \text{mod}\ 4$.
\end{rmk}

The lemmas below are surely known, yet they do not seem to 
be recorded in the literature in the precise form we need.

\begin{lem} \label{lem: p_1 is fiber homotopy inv}
For $SO_3$-bundles over finite complexes,
the first rational Pontryagin class $p_1$ depends only on the
fiber homotopy type of the associated $2$-sphere bundles.
\end{lem}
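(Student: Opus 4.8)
The plan is to show that $p_1$ of an $SO_3$-bundle is detected rationally by the $S^2$-bundle it determines, by comparing two rational classifying spaces. The key observation is that $BSO_3$ and $BSG_3$ are \emph{rationally equivalent}: the inclusion $SO_3\hookrightarrow G_3$ (orientation-preserving self-homotopy-equivalences of $S^2$) induces a rational homotopy equivalence on classifying spaces, a fact attributed to Hansen \cite{Han} and already invoked in the paper (e.g.\ in the remarks following Theorem~\ref{thm: kpt modified}). Since $H^*(BSO_3;\mathbb Q)\cong\mathbb Q[p_1]$ with $p_1$ in degree $4$, the map $H^4(BSG_3;\mathbb Q)\to H^4(BSO_3;\mathbb Q)$ is an isomorphism, so there is a well-defined class in $H^4(BSG_3;\mathbb Q)$ pulling back to $p_1$. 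This is the conceptual heart of the argument.

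First I would set up the classifying maps. Given an $SO_3$-bundle $\alpha$ over a finite complex $Z$ with classifying map $c_\alpha\co Z\to BSO_3$, the associated $S^2$-bundle is classified by the composite $Z\xrightarrow{c_\alpha}BSO_3\to BSG_3$. Two $SO_3$-bundles $\alpha$, $\alpha'$ have fiber homotopy equivalent $S^2$-bundles precisely when these composites into $BSG_3$ are homotopic, hence induce the same map on $H^4(-;\mathbb Q)$. Second, I would pull back the distinguished rational class: let $\bar p_1\in H^4(BSG_3;\mathbb Q)$ be the preimage of $p_1$ under the (iso)map $H^4(BSG_3;\mathbb Q)\xrightarrow{\cong}H^4(BSO_3;\mathbb Q)$ induced by $BSO_3\to BSG_3$. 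Then $p_1(\alpha)=c_\alpha^*p_1=c_\alpha^*(\text{restriction of }\bar p_1)=(\text{classifying map of }S(\alpha))^*\bar p_1$, which depends only on the fiber homotopy type of $S(\alpha)$. That is exactly the claim.

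The main obstacle — really the only nontrivial input — is justifying that $BSO_3\to BSG_3$ is a rational equivalence, equivalently that $SG_3$ (the identity component of $G_3$) is rationally equivalent to $SO_3$. This comes down to the classical computation that the degree-one component of the space of self-maps of $S^2$, i.e.\ $\Omega^2_1 S^2$ (since $G_3$ and $SF_3$ have the same homotopy type in the relevant sense, with $SF_3$ a component of $\Omega^3 S^3\simeq \Omega^2(\Omega S^3)$), has rational homotopy concentrated so that its classifying space matches $BSO_3$ rationally; concretely $\pi_*(SG_3)\otimes\mathbb Q$ is $\mathbb Q$ in degree $2$ (coming from $\pi_3(S^2)\otimes\mathbb Q\cong\mathbb Q$ via the Hopf map) and vanishes otherwise, matching $\pi_*(SO_3)\otimes\mathbb Q$. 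I would cite \cite{Han} for this and not reprove it. Everything else is formal: one just has to be careful that "finite complex" suffices for the rational-cohomology comparison to have no lim-one issues, which it does since all spaces in sight have finite-type rational cohomology.
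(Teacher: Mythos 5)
Your core idea — that $p_1$ pulls back rationally from the classifying space of spherical fibrations because the rank-$3$ inclusion of structure monoids is a rational equivalence on classifying spaces — is exactly the paper's strategy, and citing Hansen for $Bj_1^*\co H^*(BSG_3;\mathbb Q)\to H^*(BSO_3;\mathbb Q)$ being an isomorphism is the right input. However, there is a genuine gap in the bridging step: you assert that two $SO_3$-bundles have fiber homotopy equivalent $S^2$-bundles ``precisely when'' their composites into $BSG_3$ are homotopic. That is the criterion for \emph{oriented} fiber homotopy equivalence. A general fiber homotopy equivalence of $S^2$-fibrations is classified by a homotopy of maps into $BG_3$, and since $\pi_0(G_3)\cong\mathbb Z_2$ such an equivalence may reverse fiberwise orientation, so homotopy in $BG_3$ does not imply homotopy in $BSG_3$. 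The lemma (and its use in Proposition~\ref{prop: codim 3 and pontr class} via the unoriented equivalence produced in Proposition~\ref{prop: normal bundles are fiber homotopic}) needs the unoriented statement, so working only with $BSG_3$ proves something weaker than what is claimed.

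The paper closes exactly this gap: it works with $j\co O_3\subset G_3$ rather than $j_1\co SO_3\subset SG_3$, and shows that $p_1$ lies in the image of $Bj^*\co H^4(BG_3;\mathbb Q)\to H^4(BO_3;\mathbb Q)$. The key observation is that $BSO_3\to BO_3$ and $BSG_3\to BG_3$ are both double covers for the same $\mathbb Z_2$, and on rational cohomology a covering projection is a monomorphism onto the subspace fixed by the covering involution; since $Bj_1^*$ is a $\mathbb Z_2$-equivariant isomorphism, it restricts to an isomorphism on invariants, which is $Bj^*$. If you add this short descent argument from $BSG_3$ to $BG_3$ (equivalently, note that the relevant rational class in $H^4(BSG_3;\mathbb Q)$ is invariant under the deck transformation and hence comes from $BG_3$), your proof becomes complete and matches the paper's.
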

\begin{proof}
Denote the natural inclusions $O_3\subset G_3$ and 
$SO_3\subset SG_3$ by $j$ and $j_1$ respectively.
The fiber homotopy invariance of $p_1$ will follow,
once we show that $p_1$  lies
in the image of $Bj^*\co H^4(BG_3;\mathbb Q)\to H^4(BO_3;\mathbb Q)$.
Look at the map induced on rational cohomology
by the commutative diagram, whose rows are projections
of the $2$-fold coverings
corresponding to the first Stiefel-Whitney class.
\[
\xymatrix 
{
H^*(BSO_3;\mathbb Q) & 
H^*(BO_3;\mathbb Q) \ar[l]\\
H^*(BSG_3;\mathbb Q)\ar[u]_{Bj_1^\ast}&
H^*(BG_3;\mathbb Q)\ar[l]\ar[u]^{Bj^\ast}&
}
\]
The horizontal arrows are induced by covering projections, 
hence by a standard argument they are monomorphisms onto 
the subspace fixed by the 
covering involution. Now $Bj_1^\ast$ is an isomorphism~\cite{Han}
which is $\mathbb Z_2$-equivariant because $Bj_1$
is $\mathbb Z_2$-equivariant. 
So $Bj^\ast$ is an isomorphism as well.  
\end{proof}

\begin{lem} \label{lem: s1-bundles are hcobordant}
Suppose a manifold $N$ is the total space of two
vector bundles over closed manifolds $M_1$, $M_2$.
If the normal sphere bundles to $M_1$, $M_2$
are chosen to be disjoint in $N$, then the region between 
these sphere bundles is an h-cobordism. 
\end{lem}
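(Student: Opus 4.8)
The plan is to exhibit ``the region between the two sphere bundles'' as a compact cobordism and to check that each of its two boundary inclusions is a homotopy equivalence. First I would pass from the disjoint sphere bundles $S_1$, $S_2$ to the closed tubular (disk) neighborhoods $D_1$, $D_2$ of the zero sections $M_1$, $M_2$ that they bound, so that $\partial D_i=S_i$. Choosing the neighborhoods disjoint forces one of them into the interior of the other; since each $M_i$ is compact one can always arrange $D_1\subset\mathring D_2$ (enlarge $D_2$ until it contains $M_1$, then take $D_1$ thin), and then the region in question is $W:=D_2\setminus\mathring D_1$, a compact manifold with $\partial W=S_1\sqcup S_2$. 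If some $S_i$ is disconnected, as happens for a trivial line bundle, one runs the whole argument on the relevant pair of boundary components.

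The structural point is that $D_1\hookrightarrow D_2$ is a homotopy equivalence: each inclusion $D_i\hookrightarrow N$ is a homotopy equivalence, being a closed tubular neighborhood of the zero section $M_i$ in a manifold homotopy equivalent to $M_i$, and $D_1\hookrightarrow N$ factors as $D_1\hookrightarrow D_2\hookrightarrow N$. From this I would first deduce that $S_1\hookrightarrow W$ is a homotopy equivalence, by a deformation retraction internal to the first bundle structure: the exterior $N\setminus\mathring D_1$ is diffeomorphic to $S_1\times[0,\infty)$ and hence deformation retracts onto $S_1$; on the other hand $N\setminus\mathring D_1=W\cup_{S_2}(N\setminus\mathring D_2)$, and $N\setminus\mathring D_2\cong S_2\times[0,\infty)$ deformation retracts onto $S_2\subset W$, so $N\setminus\mathring D_1$ also deformation retracts onto $W$. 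Since $S_1\subset W\subset N\setminus\mathring D_1$ and two of these three inclusions are homotopy equivalences, so is $S_1\hookrightarrow W$.

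It remains to treat $S_2\hookrightarrow W$. Excising $\mathring D_1$ from the pair $(D_2,D_1)$ gives $H_*(W,S_1)\cong H_*(D_2,D_1)$, and the right side vanishes, with any coefficient system pulled back from $D_2$, because $D_1\hookrightarrow D_2$ is a homotopy equivalence. Poincar\'e--Lefschetz duality on $W$, carried out with the orientation local system when $W$ is nonorientable, then forces $H_*(W,S_2)=0$ as well. Together with the assertion that $\pi_1(S_2)\to\pi_1(W)$ is an isomorphism, Whitehead's theorem shows $S_2\hookrightarrow W$ is a homotopy equivalence, so $W$ is an h-cobordism. The fundamental-group statement I would verify by comparing both maps $\pi_1(S_i)\to\pi_1(W)$ with the sphere-bundle projections $\pi_1(S(\xi_i))\to\pi_1(M_i)\cong\pi_1(N)$ and invoking the homotopy invariance of the first Stiefel--Whitney class of the tangent and normal bundles recorded above.

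The hardest part is exactly this last step. In ranks $\ge 3$ the projection $\pi_1(S(\xi_i))\to\pi_1(M_i)$ is an isomorphism and the identifications are immediate, but in rank $1$ it is only injective (of index at most two, cut out by $w_1$) and in rank $2$ only surjective (with kernel the class of the fibre), so identifying $\pi_1(W)$ with $\pi_1(S_2)$, and upgrading the evident $\mathbb{Z}$-homological triviality above to $\mathbb{Z}[\pi_1 W]$-coefficients (equivalently, running the duality computation on the appropriate cover), needs a short case analysis. Everything else is a formal consequence of excision together with the elementary facts that a disk bundle deformation retracts onto its zero section and that its exterior is a collar on the sphere bundle.
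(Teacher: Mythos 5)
Your overall architecture matches the paper's: exhibit $W$ as the region between the outer disk bundle and the inner disk bundle, show the inner sphere-bundle inclusion $S_1\hookrightarrow W$ is a homotopy equivalence directly by deformation retraction, and then deduce the outer inclusion $S_2\hookrightarrow W$ is a homotopy equivalence by Poincar\'e--Lefschetz duality plus Whitehead's theorem. That much is fine, and the direct deformation-retraction step for $S_1\hookrightarrow W$ is exactly the paper's argument for the inner boundary.

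The genuine gap is the $\pi_1$-isomorphism for the outer boundary $S_2\hookrightarrow W$, which you acknowledge as ``the hardest part'' but then propose to resolve by comparing $\pi_1(S_i)\to\pi_1(N)$ through the sphere-bundle projections and invoking the homotopy invariance of $w_1$. This does not give what you need. You know $\pi_1(W)\cong\pi_1(S_1)$ from the direct retraction, and you know what $\pi_1(S_i)\to\pi_1(N)$ looks like for each rank, but to conclude $\pi_1(S_2)\to\pi_1(W)$ is an isomorphism you would have to match up kernels and images inside $\pi_1(N)$, and this is not controlled by $w_1$ alone. In rank $2$, for instance, $\ker(\pi_1(S_i)\to\pi_1(N))$ is generated by the fiber class, and you have no a priori reason that the fiber class of $\xi_2$ in $\pi_1(W)\cong\pi_1(S_1)$ corresponds to the fiber class of $\xi_1$; that kind of statement would essentially be the fiber homotopy equivalence of Proposition~\ref{prop: normal bundles are fiber homotopic}, whose proof uses the very Lemma you are proving. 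The ``short case analysis'' you allude to is neither short nor supplied.

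The paper avoids this entirely by a purely geometric argument that your setup does not support. It chooses four nested tubular neighborhoods $D_1(r)\Subset D_2(t)\Subset D_1(R)\Subset D_2(T)$ (possible since each family of disk bundles exhausts $N$) and sets $W=D_1(R)\setminus\mathring D_2(t)$. Then $\pi_1$-injectivity of the outer inclusion follows because $W$ sits inside the annular region $D_1(R)\setminus\mathring D_1(r)$ of the $\xi_1$-bundle, which deformation retracts to $S_1(R)$, so any nullhomotopy in $W$ can be pushed into $S_1(R)$; and $\pi_1$-surjectivity follows by first homotoping a loop of $W$ into $S_2(T)$ through the annular region $D_2(T)\setminus\mathring D_2(t)$ of the $\xi_2$-bundle, then pushing the homotopy back into $W$ using the collar retraction of $N\setminus\mathring D_1(R)$ onto $S_1(R)$. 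You only take two tubular neighborhoods, so these auxiliary annular regions are unavailable to you, and the $\pi_1$ claim is left unestablished.

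A smaller point: your excision step $H_*(W,S_1)\cong H_*(D_2,D_1)$ only proves vanishing for local systems pulled back from $D_2\simeq N$, which need not be all local systems on $W$ since $\pi_1(W)\to\pi_1(N)$ can fail to be injective (e.g.\ in rank $2$). It is both cleaner and stronger to drop the excision and argue, as the paper does, that since $S_1\hookrightarrow W$ is already known to be a homotopy equivalence, $(W,S_1)$ has trivial (co)homology in every local system on $W$, and only then apply duality.
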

\begin{proof} 
Denote by $S_k(r)$, $D_k(r)$ the normal $r$-sphere, $r$-disk bundles 
determined by some metric on the normal bundle to $M_k$; 
denote by $p_k$ the line bundle projection $N\setminus M_k\to S_k(r)$.
Since $D_k(r)$ exhaust $N$, there are positive numbers $r<t<R<T$ 
such that $D_1(r)\Subset D_2(t)\Subset D_1(R)\Subset D_2(T)$.
%where $\Subset$ means ``lies in the interior''.
We are to show that $W:=D_1(R)\setminus \mathring D_2(t)$
is an $h$-cobordism.
%where $\mathring A$ denotes the interior of $A$.

To see that $S_2(t)\hookrightarrow W$ is a homotopy equivalence
it suffices to note that $N\setminus\mathring D_2(t)$ deformation retracts 
both to $W$ and to $S_2(t)$ along the fibers of $p_1$, $p_2$,
respectively.

To show that  $S_1(R)\hookrightarrow W$ is a homotopy equivalence, 
we first observe that $S_1(R)\hookrightarrow W$ 
is $\pi_1$-injective for if a loop
in $S_1(R)$ is null-homotopic in $W$, then it would be
null-homotopic in the larger region 
$D_1(R)\setminus \mathring D_1(r)$ which deformation
retracts to  $S_1(R)$ along the fibers of $p_1$,
so the null-homotopy can be pushed to $S_1(R)$.

To see that $S_1(R)\hookrightarrow W$ 
is $\pi_1$-surjective, start with an arbitrary loop $\alpha$ in $W$,
and since $W$ lies in $D_2(T)\setminus\mathring D_2(t)$
which deformation retracts to $S_2(T)$ along the fibers of $p_2$,
the loop $\alpha$ can be homotoped inside $D_2(T)\setminus\mathring D_2(t)$
to some $\beta$ in $S_2(T)$.
Since $N\setminus\mathring D_1(R)$ deformation retracts 
to $S_1(R)$, the homotopy can be pushed to $W$, where
$\beta$ gets mapped into $S_1(R)$. Thus $\alpha$
is homotopic in $W$ to a loop in $S_1(R)$, as claimed.

Since $S_2(t)\hookrightarrow W$ is a homotopy equivalence,
the pair $(W,S_2(t))$ has trivial cohomology for any
system of local coefficients on $W$, hence by Poincar\'e Duality 
the pair $(W,S_1(R))$ has trivial homology for any
system of local coefficients on $W$. 
So by the non-simply-connected version of
Whitehead's theorem, which is applicable since 
$S_1(R)\hookrightarrow W$ induces a $\pi_1$-isomorphism,
we conclude that $S_1(R)\hookrightarrow W$ 
is a homotopy equivalence.
%
%Reference for non-simply-connected Whitehead is in 
%Davis-Kirk book, Theorem 6.71
%but the point is that by Shapiro's lemma ZG homology of the
%space equal to the Z-homology of the universal cover, so
%the universal cover of W deformation retracts to the universal
%cover of S_1(R), so by usual Whitehead/Hurewizc the map induces
%isomorphisms on higher homotopy groups, hence this is a h.e..
%
\end{proof}

\begin{lem}\label{lem: norm inv pulls back} 
For a homotopy equivalence $f\co N\to M$
of closed smooth manifolds, and a vector bundle 
$\a$ over $M$ with projection $p$, 
let $\hat f\co \hat N\to \hat M$ be the induced map
of disk bundles $\hat N:=D(f^\#\a)$, $\hat M:= D(\a)$. 
Then the normal invariants of $f$ and $\hat f$ satisfy
${\mathfrak q}(\hat f)=p^\ast{\mathfrak q}(f)$.
\end{lem}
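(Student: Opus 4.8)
The plan is to unwind the definition of the normal (surgery) invariant in terms of normal maps and to observe that $\hat f$ is essentially $f$ crossed with the identity on disk fibers, up to bundle twisting, so that the normal data of $\hat f$ is pulled back from that of $f$ along the projection $p$. First I would recall that, since the zero section $M\hookrightarrow \hat M=D(\a)$ is a homotopy equivalence with homotopy inverse $p$, and similarly for $f^\#\a$ over $N$, the map $\hat f$ is a homotopy equivalence of compact manifolds with boundary fitting into a homotopy-commutative square with $f$ via the zero sections and projections; in particular $\hat f$ restricted to the zero section is (homotopic to) $f$, and $p\tinycirc\hat f$ is homotopic to $f\tinycirc p'$ where $p'\co \hat N\to N$ is the projection of $f^\#\a$.

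Next I would set up the normal invariant concretely. Choosing a handle decomposition (or just using that $f$ is a homotopy equivalence of closed manifolds), $f$ is covered by a bundle map $b\co \nu_N\to \xi_f$ for some bundle $\xi_f$ over $M$ with $f^\#\xi_f\cong\nu_N$ stably, and ${\mathfrak q}(f)\in [M,G/O]$ is the class of the resulting normal map, constructed e.g. as in Wall's book after Lemma 10.6. Then I would produce a normal map covering $\hat f$ by pulling back this data along $p$: since $\nu_{\hat N}\cong p'^\#\nu_N\oplus p'^\#(f^\#\a)$ stably and $\nu_{\hat M}\cong p^\#\nu_M\oplus p^\#\a$ stably, and $\hat f$ is compatible with $p$, $p'$ and with the bundle $f^\#\a$, the bundle map $b$ pulls back to a bundle map covering $\hat f$; by naturality of the geometric construction of the normal invariant this exhibits ${\mathfrak q}(\hat f)$ as the image of ${\mathfrak q}(f)$ under $p^\ast\co [M,G/O]\to[\hat M,G/O]$. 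Equivalently, since $\hat M$ deformation retracts to $M$ via $p$, one checks that the composite $[\hat M,G/O]\xrightarrow{(\text{zero section})^\ast}[M,G/O]$ is inverse to $p^\ast$, and that the zero section restricts ${\mathfrak q}(\hat f)$ to ${\mathfrak q}(f)$ — this is exactly the relation ${\mathfrak q}(\hat f)=p^\ast{\mathfrak q}(f)$.

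The cleanest route, which I would actually write, is this last one: show directly that the zero section $s\co M\to \hat M$ satisfies $s^\ast{\mathfrak q}(\hat f)={\mathfrak q}(f)$. This follows because $\hat f\tinycirc s'$ is homotopic to $s\tinycirc f$ (where $s'$ is the zero section of $\hat N$), so the normal map covering $\hat f$ restricts over $s(M)$ — which has trivial normal bundle in $\hat M$ — to a normal map covering $f$; transversality of $\hat f$ to $s(M)$ (arranged by a homotopy, using that $s$ is a homotopy equivalence onto a submanifold with trivial normal bundle) identifies this restriction with the normal map defining ${\mathfrak q}(f)$, by the definition of restriction of normal invariants to submanifolds. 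Since $p^\ast$ and $s^\ast$ are mutually inverse isomorphisms $[M,G/O]\cong[\hat M,G/O]$, we get ${\mathfrak q}(\hat f)=p^\ast s^\ast{\mathfrak q}(\hat f)=p^\ast{\mathfrak q}(f)$.

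The main obstacle is bookkeeping rather than conceptual depth: one must be careful that the normal invariant of a homotopy equivalence of manifolds with boundary is well-defined and natural under the pullback operation $f\mapsto\hat f$, and that "restriction to a submanifold with trivial normal bundle" behaves as expected — precisely the compatibility used implicitly in Remark~\ref{rmk: fiberwise cone [Z, F]} and in the proof of Proposition~\ref{prop: h.e. on bases has trivial norm inv}. I expect the cleanest exposition to cite the geometric description of the normal invariant (as in~\cite{Wal-book} after Lemma 10.6, or~\cite{Ran-book}) and then argue by transversality to the zero section, which keeps the argument short and avoids choosing explicit bundle maps.
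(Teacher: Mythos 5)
Your proposal is correct in outline but takes a genuinely different route from the paper's proof, and it contains one factual slip worth flagging.

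The paper works directly with Wall's geometric description of the normal invariant: it picks a homotopy inverse $g$ of $f$, produces an embedding $e\colon M\to N\times\mathbb R^m$ homotopic to $g$, and identifies $N\times\mathbb R^m$ with the total space of $\nu_e$ via Siebenmann's collar theorem; projecting onto $S^{m-1}$ then gives a fiber homotopy trivialization $t$ of $S(\nu_e)$, and $(\nu_e,t)$ represents $\mathfrak q(f)$. Running the relative version of this construction for the induced embedding $\hat e\colon \hat M\to \hat N\times\mathbb R^m$ yields $(\nu_{\hat e},\hat t)$ representing $\mathfrak q(\hat f)$, and a direct computation shows $\nu_{\hat e}\cong p^\#\nu_e$ stably and $\hat t\simeq t\circ p$, which gives $\mathfrak q(\hat f)=p^\ast\mathfrak q(f)$ immediately. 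Your argument goes the other way around: instead of building the normal invariant of $\hat f$ out of that of $f$, you show $s^\ast\mathfrak q(\hat f)=\mathfrak q(f)$ by restricting to the zero section (using that $\hat f$ is a fiberwise map transverse to $s(M)$ with preimage $s'(N)$ and restriction $f$), and then invert $s^\ast$ to $p^\ast$. This is a legitimate shortcut, but it offloads the real content onto the claim that normal invariants are compatible with restriction to transverse preimages of submanifolds, which is a theorem rather than a ``definition'' and which you do not justify — whereas the paper's proof is self-contained because it unwinds Wall's construction on both sides and matches them up.

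The factual error: you assert twice that $s(M)$ has trivial normal bundle in $\hat M$, but the normal bundle of the zero section of $D(\a)$ is $\a$ itself, which is not trivial in general. This does not kill your argument — the restriction principle for normal invariants does not require the submanifold's normal bundle to be trivial, since the bundle data simply pulls back and splits off a $\hat f^\#\a$-summand on the source and an $\a$-summand on the target — but since you invoke ``trivial normal bundle'' as the justification for why the restriction works, the reasoning as written is flawed even though the conclusion is right. If you want to keep your route, drop the triviality claim and instead argue (or cite) the general compatibility of Wall's geometric normal invariant with transverse restriction, e.g.\ by the same bordism-theoretic interpretation of $[\,\cdot\,,G/O]$ that the paper references, and verify explicitly that $\hat f$ (being a bundle map over $f$) is already transverse to $s(M)$ with $\hat f^{-1}(s(M))=s'(N)$ and $\hat f\vert_{s'(N)}=f$.
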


\begin{proof}
Let $g$ be a homotopy inverse of $f$, and denote by 
$\hat g\co \hat M\to \hat N$ the corresponding map of disk bundles. 
Fix a large $m$ such that $g$ postcomposed with the inclusion 
$N\to N\times \mathbb R^m$ is homotopic to a smooth embedding 
$e\co M\to N\times \mathbb R^m$, where we may assume that
its image is disjoint from $N\times\{0\}$; let $\nu_e$ denote
the normal bundle of $e$. By~\cite[Theorem 2.2]{Sie-collar} the complement
of a tubular neighborhood of $e(M)$ is an open collar, hence
$N\times \mathbb R^m$ can be identified with the total space 
of $\nu_e$. By the proof of 
Proposition~\ref{prop: normal bundles are fiber homotopic} 
there are disjoint normal sphere bundles
$N\times S^{m-1}$, $S(\nu_e)$ to $N\times\{0\}$, $e(M)$, 
respectively, such that the region between them 
is an h-cobordism, and the radial
projection $N\times \mathbb R^m\setminus\{0\}\to N\times S^{m-1}$ 
restricted to $S(\nu_e)$ is a fiber homotopy equivalence.
Projecting on the $S^{m-1}$-factor
gives a fiber homotopy trivialization $t\co S(\nu_e)\to S^{m-1}$.
As indicated in~\cite{Wal-book} after 
Lemma~10.6, the pair $(\nu_e, t)$ 
represents ${\mathfrak q}(f)$, and moreover, there is
a relative version of the above argument which can be
applied to the embedding of disk bundles 
$\hat e\co \hat M\to \hat N\times \mathbb R^m$
obtained as the pullback of $e$.
Again, the radial projection restricts to a fiber homotopy equivalence
$S(\nu_{_{\hat e}})\to \hat N\times S^{m-1}$ of normal sphere 
bundles, which gives rise to a fiber homotopy trivialization 
$\hat t\co S(\nu_{_{\hat e}})\to S^{m-1}$
such that $(\nu_{_{\hat e}}, \hat t)$ 
represents ${\mathfrak q}(\hat f)$.

That $p^\#\nu_e$ is stably isomorphic to $\nu_{_{\hat e}}$
follows by a straightforward computation showing that
$g^\#\nu_{_N}\oplus \tau_{_M}\oplus\nu_e$ and  
${\hat g}^\#\nu_{_{\hat N}}\oplus 
\tau_{_{\hat M}}\oplus \nu_{_{\hat e}}$
are stably trivial, and that $p$ pulls $g^\#\nu_{_N}\oplus \tau_{_M}$
back to ${\hat g}^\#\nu_{_{\hat N}}\oplus \tau_{_{\hat M}}$,
where $\nu_{_X}$, $\tau_{_X}$ denote the stable normal 
and tangent bundles of $X$.
That $\hat t$ is homotopic to $t\tinycirc p$ follows 
because by construction $t=\hat t\vert_{S(\nu_e)}$ 
and $p$ is a deformation retraction.
\end{proof}

\section{Finiteness results for vector bundles of rank $\le 3$}
\label{sec: bundles of rank <4}

In this section we prove Theorem~\ref{intro-thm: codim <4 finiteness}
and other related results. Let $\{B_\eta\}$ be the 
set of closed manifolds such that $N$ is the total 
space of a vector bundle $\eta$ of rank $\le 3$
over some $B_\eta$. All $B_\eta$'s are homotopy 
equivalent to $N$ and hence to each other, 
so $n:=\dim(B_\eta)$ is constant, thus all 
$\eta$'s have rank equal to $\dim(N)-n$.

\begin{prop}
\label{prop: tang and trivial norm inv}
$\{B_\eta\}$ can be partitioned into
finitely many subsets such that if $B_\eta$, $B_\xi$
lie in the same subset, then there is a 
tangential homotopy equivalence $g_{\xi,\eta}\co B_\xi\to B_\eta$ 
such that $g_{\xi,\eta\,}^\ast \eta\cong\xi$, and furthermore,
$g_{\xi,\eta\,}$ has trivial normal invariant in $[B_\eta, F/O]$.
\end{prop}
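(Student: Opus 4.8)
The plan is to take $g_{\xi,\eta}$ to be the canonical homotopy equivalence $f_{\xi,\eta}$ itself and to choose the partition so that $f_{\xi,\eta}^{\#}\eta\cong\xi$ whenever $B_\xi,B_\eta$ lie in one subset; given that isomorphism, Proposition~\ref{prop: h.e. on bases has trivial norm inv} immediately yields that $f_{\xi,\eta}$ is tangential with trivial normal invariant in $[B_\eta,F/O]$. All the $\eta$ have the same rank, call it $r$, so there are two cases. If $r\le 2$, then $f_{\xi,\eta}^{\#}\eta\cong\xi$ for \emph{every} pair --- this is Corollary~\ref{cor: codim 2 and tang hom eq} when $r\in\{1,2\}$ and is trivial when $r=0$ --- so a single subset works. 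Thus the content lies in the case $r=3$.

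For $r=3$, I would fix one member $\eta_0$ of the collection and set $Z:=B_{\eta_0}$, a finite complex homotopy equivalent to $N$; for each $\eta$ put $\bar\eta:=f_{\eta_0,\eta}^{\#}\eta$, a rank-$3$ bundle over $Z$. By Proposition~\ref{prop: normal bundles are fiber homotopic} the sphere bundle $S(\bar\eta)$ is fiber homotopy equivalent to $S(\eta_0)$ for \emph{every} $\eta$, so all the $\bar\eta$ realize one fixed fiber homotopy type. The next point is that only finitely many isomorphism classes of rank-$3$ bundles over a finite complex can have a given sphere bundle fiber homotopy type: the homotopy fiber of $BO_3\to BG_3$ has finite homotopy groups, since on universal covers this is the map $BSO_3\to BSG_3$, a rational homotopy equivalence by~\cite{Han} (see the proof of Lemma~\ref{lem: p_1 is fiber homotopy inv}); hence maps from a finite complex into that fiber form a finite set, and the corresponding lifting problem has finitely many solutions up to homotopy. (One really does need the fiber homotopy type here, not merely the rational Pontryagin class of Proposition~\ref{prop: codim 3 and pontr class}, because the homotopy fiber of $BO_3\to BO$ has infinite homotopy groups.) Therefore $B_\eta\mapsto(\text{isomorphism class of }\bar\eta)$ takes only finitely many values, and I take the induced finite partition of $\{B_\eta\}$.

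It then remains to verify this partition works. Suppose $B_\xi,B_\eta$ lie in the same subset, so $\bar\xi\cong\bar\eta$ over $Z$. Because the composite $N\to B_\xi\hookrightarrow N$ is homotopic to $\mathrm{id}_N$, we have $f_{\xi,\eta}\circ f_{\eta_0,\xi}\simeq f_{\eta_0,\eta}$, whence $f_{\eta_0,\xi}^{\#}(f_{\xi,\eta}^{\#}\eta)\cong f_{\eta_0,\eta}^{\#}\eta=\bar\eta\cong\bar\xi=f_{\eta_0,\xi}^{\#}\xi$; since $f_{\eta_0,\xi}$ is a homotopy equivalence, pullback along it is a bijection on isomorphism classes of bundles, so $f_{\xi,\eta}^{\#}\eta\cong\xi$ over $B_\xi$. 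Proposition~\ref{prop: h.e. on bases has trivial norm inv} now applies to show that $g_{\xi,\eta}:=f_{\xi,\eta}$ is a tangential homotopy equivalence with $g_{\xi,\eta}^{\#}\eta\cong\xi$ and trivial normal invariant in $[B_\eta,F/O]$, as required.

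The main obstacle I anticipate is the finiteness step: making rigorous that a fixed fiber homotopy type of a rank-$3$ sphere bundle is realized by only finitely many vector bundles over a finite complex. This is the same kind of count the paper already performs for $[Z,BSF_3]$, and it boils down to the finiteness of the homotopy groups of the homotopy fiber of $BO_3\to BG_3$; what requires a little care is the (non-principal) section-space/lifting argument and checking that passing from $BSG_3$ to the non-orientable $BG_3$ (to account for $w_1$) causes no trouble. The rest of the argument is formal pullback bookkeeping together with a single invocation of Proposition~\ref{prop: h.e. on bases has trivial norm inv}.
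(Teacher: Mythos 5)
Your proof is correct, and in one respect it is actually cleaner than the paper's own argument, but you have also made an incorrect claim about the paper's finiteness step along the way.

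The genuine improvement: you observe that the coherence $f_{\xi,\eta}\circ f_{\eta_0,\xi}\simeq f_{\eta_0,\eta}$ identifies the paper's $h_{\xi,\eta}:=f_{\eta_0,\eta}\circ f_{\eta_0,\xi}^{-1}$ with $f_{\xi,\eta}$ up to homotopy, so that once one knows $f_{\xi,\eta}^\#\eta\cong\xi$ on a subset, Proposition~\ref{prop: h.e. on bases has trivial norm inv} delivers tangentiality and trivial normal invariant in one stroke. The paper instead checks tangentiality directly and then performs a second refinement of the partition by normal invariant (using finiteness of $[B_\eta,F]$) followed by a composition-formula computation; that second stage is dispensable given Proposition~\ref{prop: h.e. on bases has trivial norm inv}, as you correctly note. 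Your pullback bookkeeping $f_{\eta_0,\xi}^\#(f_{\xi,\eta}^\#\eta)\cong\bar\eta\cong\bar\xi=f_{\eta_0,\xi}^\#\xi$ is fine, and your alternative route to finiteness (finitely many rank-$3$ bundles over $Z$ with a prescribed sphere-bundle fiber homotopy type, since the homotopy fiber of $BO_3\to BG_3$ has finite homotopy groups) is also valid and mirrors the $[Z,BSF_3]$ count the paper does in Theorem~\ref{thm: kpt modified}.

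Where you go wrong is the parenthetical claim that the paper's finiteness step is defective. The paper does not assert anything about the homotopy fiber of $BO_3\to BO$ (which, as you say, has an infinite $\pi_7$). What the paper uses is that the fibers of $[Z,BO_3]\to H^4(Z;\mathbb Q)$, $[\text{map}]\mapsto p_1$, are finite for a finite complex $Z$, and this is true: $\pi_n(BO_3)=\pi_{n-1}(O_3)$ is finite for every $n\neq 4$ (since $\pi_m(SO_3)=\pi_m(S^3)$ is finite for $m\ge 4$), and the unique infinite homotopy group $\pi_4(BO_3)\cong\mathbb Z$ is detected, up to torsion, by $p_1$. Running the obstruction theory over the Postnikov tower of $BO_3$, the ambiguity at each stage except $n=4$ is a quotient of a finite group $H^n(Z;\pi_n(BO_3))$, while the $n=4$ ambiguity is a quotient of $H^4(Z;\mathbb Z)$ and is killed modulo torsion by fixing the rational Pontryagin class. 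So the paper's sentence ``an $O(3)$-bundle is determined by its rational Pontryagin class up to finite ambiguity'' is correct as stated; your objection would be sound for rank-$2$ bundles (where the Euler class furnishes a free invariant not seen by $p_1$), but not for rank $3$. In short: your proof works, your criticism of the paper's variant of the finiteness step does not.
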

\begin{proof}
Fix one base manifold $B_{\eta_{_0}}\in \{B_\eta\}$ with normal
bundle $\eta_{_0}$ of rank $3$,
and pullback each $\eta$ to $B_{\eta_{_0}}$ via $f_{\eta_{_0},\eta}$. 
Since $f_{\xi,\eta\,}^\ast p(\eta)\cong p(\eta)$,
and since an $O(k)$-bundle is determined by its rational
Pontryagin classes up to finite ambiguity, there are finitely many possibilities for $f_{\eta_{_0},\eta\,}^\#\eta$. 
Each possibility corresponds to a subset in the partition for
if $f_{\eta_{_0},\eta\,}^\#\eta=f_{\eta_{_0},\xi}^\#\xi$, 
then
$h_{\xi,\eta}:={f_{\eta_{_0},\xi\,}^{-1}}{\tinycirc} f_{\eta_{_0},\eta\,}$
pulls back $\eta$ to $\xi$, 
and also is tangential as it preserves $TN$
restricted to the zero sections.
If the rank is $\le 2$, then by 
Corollary~\ref{cor: codim 2 and tang hom eq},
the partition consists of one subset $\{B_\eta\}$;
for the sake of uniformity we set 
$h_{\xi,\eta}:=f_{\xi,\eta}$.

Now fix an arbitrary $B_\eta$ in a subset $S$ of the partition. 
The homotopy $h_{\xi,\eta}$ equivalences represent elements 
$(h_{\xi,\eta}, B_\xi)$ in the structure set 
${\bf S}^h(B_\eta)$ of homotopy structures.
Since $h_{\xi,\eta}$ are tangential, their normal
invariants are in the image of
$[B_\eta, F]\to [B_\eta, F/O]$, which is finite,
so by partitioning $S$ further (into finitely many subsets)
we get a subset $\{B_{\xi_i}\}$ such that 
all $f_i:=h_{\xi_i,\eta}$ have equal normal invariants.
It is for this subset we find the homotopy equivalencies with
desired properties. Fix any $\xi_0\in\{\xi_i\}$.
If $g_{0}$ denotes the homotopy inverse of $f_0$, then
by the composition formula for normal invariants,
recalled in~\cite[Sections 3]{BKS-mod2}, we get 
${\mathfrak q}({g_0}{\tinycirc} f_i)=
f_0^\ast {\mathfrak q}(f_i)+ {\mathfrak q}(g_0)$, and
$0={\mathfrak q}({g_0}{\tinycirc} f_0)=
f_0^\ast {\mathfrak q}(f_0)+{\mathfrak q}(g_0)$.
So ${\mathfrak q}({g_0}{\tinycirc} f_i)=f_0^\ast ({\mathfrak q}(f_i)-{\mathfrak q}(f_0))=0$ as ${\mathfrak q}(f_i)={\mathfrak q}(f_0)$
for all $i$. Now 
$g_{\xi,\eta\,}:={g_0}{\tinycirc} f_i=
{h_{\xi_0,\eta}^{-1}}{\tinycirc}{h_{\xi_i,\eta}}$
is a tangential homotopy equivalence from $B_{\xi_i}$ to $B_{\xi_0}$
that pulls back $\xi_i$ to $\xi_0$, and has trivial normal invariant.
\end{proof}

\begin{thm}\label{thm: 1-connected or even dim pair finiteness}
If $n=\dim(B_\eta)\ge 5$, then the pairs 
$(N,B_\eta)$ lie in finitely many diffeomorphism types if either
\textup{(i)} $B_\e$ is simply-connected, or \newline
\textup{(ii)} $B_\eta$ is orientable, $n$ is even, 
and $G:=\pi_1(B_\eta)$ is finite.
\end{thm}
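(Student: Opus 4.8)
The plan is to combine Proposition~\ref{prop: tang and trivial norm inv} with the surgery exact sequence of the base manifold; the point is that the bundle-compatibility clause of that proposition lets us convert a finiteness statement about structure sets of the bases $B_\eta$ into one about the pairs $(N,B_\eta)$. By Proposition~\ref{prop: tang and trivial norm inv} the set $\{B_\eta\}$ is a finite union of subsets, so it suffices to bound the number of diffeomorphism types of pairs $(N,B_\xi)$ as $B_\xi$ runs over one subset. Fix a base point $B_{\eta_0}$ of such a subset, with normal bundle $\eta_0$; then for each $B_\xi$ in the subset there is a tangential homotopy equivalence $g_\xi\co B_\xi\to B_{\eta_0}$ with $g_\xi^\ast\eta_0\cong\xi$ and with trivial normal invariant $\mathfrak q(g_\xi)=0$ in $[B_{\eta_0},F/O]$.

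First I would record that the assignment $\xi\mapsto[B_\xi,g_\xi]\in{\bf S}^h(B_{\eta_0})$ separates the pairs at least as finely as diffeomorphism does: if $[B_\xi,g_\xi]=[B_{\xi'},g_{\xi'}]$ there is a diffeomorphism $\psi\co B_\xi\to B_{\xi'}$ with $g_{\xi'}{\tinycirc}\psi\simeq g_\xi$, hence $\psi^\ast\xi'\cong(g_{\xi'}{\tinycirc}\psi)^\ast\eta_0\cong g_\xi^\ast\eta_0\cong\xi$, and the resulting bundle isomorphism gives a diffeomorphism of total spaces $E(\xi)\to E(\psi^\ast\xi')\to E(\xi')$ carrying the zero section of $\xi$ onto that of $\xi'$, i.e.\ a self-diffeomorphism of $N$ taking $B_\xi$ to $B_{\xi'}$. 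Consequently the number of diffeomorphism types of pairs in the subset is at most the cardinality of the image of $\xi\mapsto[B_\xi,g_\xi]$, and since every $g_\xi$ has trivial normal invariant this image is contained in the fibre $\mathfrak q^{-1}(0)$ of the normal invariant map $\mathfrak q\co{\bf S}^h(B_{\eta_0})\to[B_{\eta_0},F/O]$.

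It then remains to show $\mathfrak q^{-1}(0)$ is finite. As $n=\dim B_{\eta_0}\ge5$, the surgery exact sequence applies, and by exactness $\mathfrak q^{-1}(0)$ is the orbit of $[\mathrm{id}_{B_{\eta_0}}]$ under the action of $L^h_{n+1}(\mathbb Z[G])$ on ${\bf S}^h(B_{\eta_0})$, where $G=\pi_1(B_{\eta_0})$. In case \textup{(ii)}, $n$ is even and $G$ is finite, so $L^h_{n+1}(\mathbb Z[G])$ is an odd-dimensional $L$-group of a finite group, hence finite, and the orbit is finite. In case \textup{(i)}, $G$ is trivial, so $L^h_{n+1}(\mathbb Z[G])=L_{n+1}(\mathbb Z)$, which is $0$ or $\mathbb Z/2$ unless $n\equiv3\pmod4$; in the remaining case $L_{n+1}(\mathbb Z)\cong\mathbb Z$ is infinite, but the orbit of $[\mathrm{id}_{B_{\eta_0}}]$ then consists only of the structures $B_{\eta_0}\#\Sigma\to B_{\eta_0}$ with $\Sigma$ a homotopy $n$-sphere bounding a parallelizable manifold, and the group $bP_{n+1}$ of such spheres is finite by Kervaire--Milnor, so $\mathfrak q^{-1}(0)$ is again finite. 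Summing over the finitely many subsets proves the theorem.

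The main obstacle I anticipate is the simply-connected case with $n\equiv3\pmod4$: there $L_{n+1}(\mathbb Z)$ is itself infinite, and one must use the sharper fact that the Wall realization of a class in $L_{n+1}(\mathbb Z)$ alters $B_{\eta_0}$ only by connected sum with a homotopy sphere lying in the finite group $bP_{n+1}$, so that the orbit of the base point is finite even though the acting group is not. A secondary technical point is the bookkeeping of the second paragraph, which converts equalities in ${\bf S}^h(B_{\eta_0})$ into diffeomorphisms of the pairs $(N,B_\xi)$; this is exactly where the relation $g_\xi^\ast\eta_0\cong\xi$ from Proposition~\ref{prop: tang and trivial norm inv} enters, and it is the reason the rank of the bundles plays no further role once that proposition has been invoked.
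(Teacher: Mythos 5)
Your argument for case (i) is essentially the same as the paper's, and correct: for simply-connected $B_{\eta_0}$ there is no difference between $\mathbf{S}^h$ and $\mathbf{S}^s$, every homotopy equivalence is simple, h-cobordisms are trivial, and your observation that the $L_{n+1}(\mathbb Z)$-action on the basepoint factors through $bP_{n+1}$ handles the $n\equiv 3\pmod 4$ case exactly as the paper does.

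There is, however, a genuine gap in case (ii). You claim that if $[B_\xi,g_\xi]=[B_{\xi'},g_{\xi'}]$ in $\mathbf{S}^h(B_{\eta_0})$ then there is a diffeomorphism $\psi\co B_\xi\to B_{\xi'}$ with $g_{\xi'}\tinycirc\psi\simeq g_\xi$. This is false. Equality in the $h$-structure set means only that $B_\xi$ and $B_{\xi'}$ are h-cobordant over $B_{\eta_0}$; it does not produce a diffeomorphism unless the h-cobordism is trivial, which requires vanishing Whitehead torsion. When $G$ is a nontrivial finite group, $\mathrm{Wh}(G)$ is typically nonzero and can even be infinite, so non-trivial h-cobordisms abound, and indeed Example~\ref{ex: finiteness fails} in the paper is built precisely from this phenomenon. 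The paper's proof of (ii) devotes its second half to closing exactly this hole: after passing to a subsequence so that all $g_{\xi,\eta}$ represent the same element of $\mathbf{S}^h(B_\eta)$, one identifies the torsion of the resulting comparison map $\phi_{\xi,\zeta}$ as $(-1)^n\sigma^\ast-\sigma$ with $\sigma=\tau(W_{\xi,\zeta},B_\zeta)$ (Lemma~\ref{lem: h-cob}), uses Wall's theorem that the standard involution acts trivially on $\mathrm{Wh}(G)/\mathrm{SK}_1(\mathbb ZG)$ for finite $G$ so that $\sigma^\ast-\sigma$ lies in the finite group $\mathrm{SK}_1(\mathbb ZG)$, passes to a further subsequence on which $\tau(\phi_{\xi,\zeta})$ is constant, and then composes to cancel torsions and invoke the s-cobordism theorem. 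None of this appears in your write-up, and without it the step from ``finitely many $h$-structure classes'' to ``finitely many diffeomorphism types of pairs'' does not go through in the non-simply-connected case. Note also that it is precisely at this point that the orientability hypothesis in (ii) earns its keep (it lets one apply duality for the torsion formula), and that the parity of $n$ enters again so that the $\sigma$ and $(-1)^n\sigma^\ast$ terms reinforce rather than cancel, which is what forces the descent to $\mathrm{SK}_1$.
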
 
\begin{proof} 
Fix $B_\eta$ in a subset of the partition given by 
Proposition~\ref{prop: tang and trivial norm inv}.

(i) Since $B_\eta$ is simply-connected, 
each homotopy equivalence $g_{\xi,\eta\,}$
is simple. By Proposition~\ref{prop: tang and trivial norm inv}
and exactness of the surgery sequence,
the element represented by $g_{\xi,\eta\,}$ in
the simple structure set ${\bf S}^s(B_\eta)$ lie
in the orbit of $L^s_{n+1}(1)$ of the identity, which is finite 
because $L^s_{n+1}(1)$-action factors through the finite group
$bP_{n+1}$.
If $\{B_\eta\}$ is infinite, which is the only interesting case,
then there is a subsequence in which all $g_{\xi,\eta\,}$
represent the same element in the structure set. So there are
diffeomorphisms $\phi_{\xi,\zeta}$ such that
$g_{\zeta,\eta\,}{\tinycirc}\phi_{\xi,\zeta}$ is homotopic to
$g_{\xi,\eta\,}$. But diffeomorphisms pull back normal bundles,
so $\phi_{\xi,\zeta}$ induces a diffeomorphism 
$(N,B_\xi)\to (N,B_\eta)$. 

(ii) 
The homotopy equivalence $g_{\xi,\eta\,}$
represent the elements in the structure set ${\bf S}^h(B_\eta)$ 
that, because of exactness of the surgery sequence and 
Proposition~\ref{prop: tang and trivial norm inv}, lie
in the orbit of $L^h_{n+1}(G)$ of the identity. 
Since $n$ is even and $G$ is finite,
$L^h_{n+1}(G)$ is a  finite group (see e.g.~\cite{HamTay}). 
If $\{B_\eta\}$ is infinite, then 
there is a subsequence in which all $g_{\xi,\eta\,}$
represent the same element
${\bf S}^h(B_\eta)$, and hence they are pairwise h-cobordant.
In particular, there are homotopy equivalences
$\phi_{\xi,\zeta}$ that identify the boundaries $B_\xi$ and $B_\zeta$
of the h-cobordism $W_{\xi,\zeta}$ such that
$g_{\zeta,\eta\,}{\tinycirc}\phi_{\xi,\zeta}$ is homotopic to
$g_{\xi,\eta\,}$. 
Fix orientations on all $B_\eta$ that are preserved by
$f_{\zeta,\eta\,}$; then
by a well-known computation (recalled in Lemma~\ref{lem: h-cob}), 
the torsion
$\tau(\phi_{\xi,\zeta})$ is of the form 
$(-1)^n\s^\ast-\s\in \mathrm{Wh}(G)$ where 
$\s=\tau(W_{\xi,\zeta}, B_\zeta)$, the torsion of 
the pair $(W_{\xi,\zeta}, B_\zeta)$.
By a result of Wall~\cite[7.4, 7.5]{Oli},
finiteness of $G$ implies that the standard involution $\ast$
acts trivially on the quotient of $\mathrm{Wh}(G)$ 
by its maximal torsion subgroup 
${\mathrm SK}_1(\mathbb ZG)$, which is finite, so 
$\s^\ast-\s$ lies in ${\mathrm SK}_1(\mathbb ZG)$,
and hence passing to a subsequence we may assume that
$\tau(\phi_{\xi,\zeta})$ is constant. 

Fix $\zeta$ and vary $\xi$, i.e. let $\xi=\xi_i$.
By the composition formula for torsion, we see that 
${(\phi_{\xi_0, \z})^{-1}}{\tinycirc}{\phi_{\xi_i, \z}}
\co B_{\xi_i}\to B_{\xi_0}$
has trivial torsion, which by the above-mentioned computation
equals the torsion of the h-cobordism $W_{\xi,\xi_0}$
obtained by concatenating the corresponding h-cobordisms
$W_{\xi,\zeta}\cup W_{\xi_0,\zeta}$; thus
$W_{\xi,\xi_0}$ is trivial by the s-cobordism theorem.
It follows that 
${(\phi_{\xi_0, \z})^{-1}}{\tinycirc}{\phi_{\xi_i, \z}}$
is homotopic to a diffeomorphism, which then pull back normal bundles,
and hence induces a diffeomorphism 
$(N,B_{\xi_i})\to (N,B_{\xi_0})$.
\end{proof}

\begin{ex}\label{ex: finiteness fails}
Part (ii) fails for $n$ odd (even though it is unclear how to
realize any
of the following examples in the nonnegative
curvature setting): 

(1) 
If $G$ is finite cyclic of order $5$ or 
of order $\ge 7$, and $M$ is a homotopy lens space
with fundamental group $G$ and dimension $\ge 5$, 
then the h-cobordism class
of $M$ contains infinitely many non-homeomorphic 
manifolds~\cite[Corollary 12.9]{Mil-wh} distinguished
by Reidemeister torsion. Thus these manifolds are not even simply
homotopy equivalent, while their products with 
$\mathbb R$ are diffeomorphic. Also see~\cite{KS-eq-h-con}
for a similar result for fake spherical space forms.

(2) By a result of L{\'o}pez de Medrano~\cite{LdM} there are
infinitely many homotopy $\mathbb {RP}^{4k-1}$'s with $k>1$ 
distinguished by Browder-Livesay invariants, and such that
their canonical line bundles are diffeomorphic to the
canonical line bundle over the standard $\mathbb {RP}^{4k-1}$.

(3) More generally, Chang-Weinberger showed in~\cite{ChaWei}
that for any compact oriented smooth $(4r-1)$-manifold $M$, 
with $r\ge 2$
and $\pi_1(M)$ not torsion-free, there exist infinitely many 
pairwise non-homeomorphic closed smooth manifolds $M_i$ that
are simple homotopy equivalent and tangentially homotopy 
equivalent to $M$. 
As in the proof of Proposition~\ref{prop: equal norm inv implies diffeo}
below we see that the manifolds $M_i\times D^3$ lie in finitely
many diffeomorphism types.
\end{ex}

\begin{rmk} If the closed 
manifolds in (1)-(3) admit metrics of $\sec\ge 0$,
then they can be realized as souls of codimension $3$
with trivial normal bundle because they lie in finitely
many tangential homotopy types so 
Proposition~\ref{prop: equal norm inv implies diffeo} applies;
in fact, examples in (1)-(2)
could then be realized as codimension $1$ souls
because any real line bundle over a closed nonnegatively curved manifold
admits a complete metric of $\sec\ge 0$ with zero section 
being a soul.
\end{rmk}

\begin{rmk} It is an (obvious) implication of 
Theorem~\ref{thm: 1-connected or even dim pair finiteness}(ii) that
examples (1)-(3) disappear
after multiplying by a suitable closed manifold, i.e.
if $B_\eta$, $B$ are orientable, odd-dimensional, closed manifolds 
with finite fundamental groups, then the pairs
$\{(N\times B, B_\eta\times B)\}$ lie in finitely many
diffeomorphism types. 
\end{rmk}

\begin{rmk} 
It seems the assumption in (ii) that $B_\eta$ is orientable could be 
removed by working with the proper surgery exact sequence but we
choose not to do this here. 
%The above proof of (ii) fails without the orientability assumption
%for then the correct $\ast$ to consider is 
%the one twisted by the orientation character, and this 
%twisted involution need not act trivially on the quotient. 
%The proof does work as written if $\mathrm{Wh}(G)$ is finite. 
%which for finite $G$ is equivalent to saying that
%if $g, h$ are generators of  a cyclic subgroup 
%of $G$, then $g$ is conjugate in $G$ to $h$ or $h^{-1}$ 
%[Oliver book, Theorem 2.6]. 
%In a different direction, if $M$ closed manifold with zero Euler 
%characteristic, the pairs
%$\{(N\times M, B_\eta\times M)\}$ lie in finitely many
%diffeomorphism types, because all the h-cobordisms in the
%proof of (ii) become trivial after multiplying by $M$.
\end{rmk}

The case of trivial normal bundles deserves special attention 
e.g. because the total space always admits a metric of 
$\sec\ge 0$ provided the base does. 

\begin{prop} \label{prop: equal norm inv implies diffeo}
Suppose that $\{M_i\}$ is a sequence of closed manifolds 
of dimension $\ge 5$. Then $\{M_i\}$ lies in finitely many
tangential homotopy types if and only if $\{M_i\times\mathbb R^3\}$
lies in finitely many diffeomorphism types. 
\end{prop}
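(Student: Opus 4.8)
The plan is to prove the two implications separately; the forward implication ($\{M_i\}$ tangentially finite $\Rightarrow$ $\{M_i\times\R^3\}$ diffeomorphically finite) is the substantive one.

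\textbf{Easy direction.} Assume $\{M_i\times\R^3\}$ falls into finitely many diffeomorphism types. Whenever $M_i\times\R^3$ is diffeomorphic to $M_j\times\R^3$, view this common open manifold as the total space of the trivial bundle $\e^3$ over $M_i$ and, via the chosen diffeomorphism, over a copy of $M_j$; the canonical homotopy equivalence $f_{\xi,\eta}\co M_i\to M_j$ of Section~\ref{sec: vb with diffeo tot spaces} then satisfies $f_{\xi,\eta}^\#\eta\cong\xi$ trivially, since both bundles are trivial, so Proposition~\ref{prop: h.e. on bases has trivial norm inv} shows $f_{\xi,\eta}$ is tangential. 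Hence $M_i$ and $M_j$ are tangentially homotopy equivalent, and partitioning $\{M_i\}$ by the diffeomorphism type of $M_i\times\R^3$ sorts it into finitely many tangential homotopy types.

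\textbf{Main direction, setup.} Assume $\{M_i\}$ falls into finitely many tangential homotopy types, and partition it so that within each part there is a fixed closed $n$-manifold $M$ and tangential homotopy equivalences $g_i\co M_i\to M$. It suffices to bound the number of diffeomorphism types of $M_i\times\R^3$ within one part, since on passing to interiors a diffeomorphism $M_i\times D^3\to M_j\times D^3$ restricts to one $M_i\times\R^3\to M_j\times\R^3$. Let $\hat g_i:=g_i\times\mathrm{id}_{D^3}\co (M_i\times D^3,\, M_i\times S^2)\to (M\times D^3,\, M\times S^2)$ be the induced homotopy equivalence of compact manifold pairs. Since $g_i$ pulls back stable tangent bundles, so does $\hat g_i$, hence $\hat g_i$ is tangential and represents an element of the tangential homotopy structure set ${\bf S}^{h,t}(M\times D^3,\, M\times S^2)$ of the manifold $M\times D^3$, which has dimension $n+3\ge 8$.

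\textbf{Main direction, finish.} Because $S^2$ and $D^3$ are simply connected, the inclusion $M\times S^2\hookrightarrow M\times D^3$ is a $\pi_1$-isomorphism, so the relative Wall groups $L_\ast^h(\pi_1(M\times D^3),\,\pi_1(M\times S^2))$ vanish by Wall's $\pi$-$\pi$ theorem; the tangential surgery exact sequence, used exactly as in the proof of Proposition~\ref{prop: disk bundles pullback}, then identifies ${\bf S}^{h,t}(M\times D^3,\, M\times S^2)$ with $[M\times D^3,F]=[M,F]$, which is finite by~\cite[Proposition 9.20(iv)]{Ran-book}. Thus the $\hat g_i$ in a given part take only finitely many values in this structure set; grouping the indices with a common value, for any two $i,j$ in a group the equality of $\hat g_i$ and $\hat g_j$ in the structure set provides a diffeomorphism of pairs $M_i\times D^3\to M_j\times D^3$, hence a diffeomorphism $M_i\times\R^3\cong M_j\times\R^3$. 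Finitely many groups per part, and finitely many parts, give the claim.

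\textbf{Main obstacle.} The delicate point is choosing the right flavor of structure set. One must use the \emph{tangential} structure set --- the ordinary structure set of $M\times D^3$ is classified by $[M,F/O]$, which is typically infinite --- so tangentiality of the $g_i$ is genuinely needed and must be transported to $\hat g_i$; one must use the \emph{homotopy} rather than simple structure set, since the Whitehead torsion of $g_i$ is uncontrolled; and one must use the version in which the boundary is allowed to vary, so that the $\pi$-$\pi$ theorem applies to the pair $(M\times D^3,\, M\times S^2)$ and the resulting equivalence is an honest diffeomorphism of the open manifolds. This last requirement is precisely why $\R^3$, and not $\R^2$, occurs: for $\R^2$ the boundary inclusion $M\times S^1\hookrightarrow M\times D^2$ fails to be a $\pi_1$-isomorphism.
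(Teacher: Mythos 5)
Your strategy is close to the paper's --- both reduce the hard direction to a $\pi$-$\pi$ surgery argument over $M\times D^3$, using tangentiality to put the normal invariant into the finite set $[M,F]$ --- but there is a genuine gap at the final step: you work with the \emph{homotopy} tangential structure set ${\bf S}^{h,t}(M\times D^3, M\times S^2)$ and then assert that equality of $\hat g_i$ and $\hat g_j$ in it ``provides a diffeomorphism of pairs $M_i\times D^3\to M_j\times D^3$.'' That is not what equality in an $h$-decorated structure set delivers. Two structures agreeing in $\mathbf S^h$ are only \emph{h-cobordant}; the h-cobordism may have nontrivial Whitehead torsion, in which case its ends are not diffeomorphic. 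Only the $s$-decorated structure set has the property that equality implies diffeomorphism, via the $s$-cobordism theorem. The ``diffeomorphism type of the domain'' is not even a well-defined invariant of a class in $\mathbf S^h$. You cannot simply switch to ${\bf S}^{s,t}$, since $\hat g_i = g_i\times\mathrm{id}_{D^3}$ has Whitehead torsion $\tau(\hat g_i) = \chi(D^3)\,\tau(g_i) = \tau(g_i)$, and tangentiality puts no constraint on $\tau(g_i)$; so $\hat g_i$ need not represent an element of ${\bf S}^{s,t}$ at all.

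This is precisely the point the paper's proof is engineered around. After reducing (as you do, and as in Proposition~\ref{prop: tang and trivial norm inv}) to maps $h_i\co M_i\to M_0$ with trivial normal invariant, the paper notices that $H_i = h_i\times\mathrm{id}(D^3)$ may fail to be simple and fixes this by attaching an $h$-cobordism of prescribed torsion to the boundary $M_i\times S^2$, producing a compact manifold $Q_i$ with $\mathring Q_i = M_i\times\mathbb R^3$ and a \emph{simple} structure $F_i\co Q_i\to M_0\times D^3$ with still-trivial normal invariant. Wall's $\pi$-$\pi$ theorem then gives an honest diffeomorphism $Q_i\cong M_0\times D^3$, and restricting to interiors finishes. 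In other words, the paper modifies the domain (not the decoration) to land in ${\bf S}^{s,t}$, whereas your argument changes the decoration to $h$ and then incorrectly treats it like $s$. Without an additional argument converting an $h$-cobordism of pairs $(M_i\times D^3, M_i\times S^2)\simeq (M_j\times D^3, M_j\times S^2)$ into a diffeomorphism of interiors, your proof is incomplete.

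Your ``easy direction'' and your remark explaining why $\mathbb R^3$ (and not $\mathbb R^2$) is the right stabilizing factor are both correct.
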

\begin{proof} 
The ``if'' direction follows from 
Proposition~\ref{prop: tang and trivial norm inv}, so
we focus on the other direction.
As in the proof of 
Proposition~\ref{prop: tang and trivial norm inv},
after passing to a subsequence we may
assume there are homotopy equivalences $h_i\co M_i\to M_0$
with trivial normal invariants.
Then $H_i:=h_i\times {\bf id}(D^3)$ is normally cobordant to the identity
via the product of $D^3$ with the normal cobordism from $h_i$ to the identity.

The homotopy equivalence $H_i$ need not be simple, so we
replace it with a simple homotopy equivalence as follows.
Attaching a suitable h-cobordism on the boundary of 
$M_i\times D^3$ turns $M_i\times D^3$ into a manifold 
$Q_i$ with ${\mathring Q}_i=M_i\times\mathbb R^3$, and 
precomposing $H_i$ with a deformation
retraction $Q_i\to M_i\times D^3$ yields a simple
homotopy structure equivalence $F_i\co Q_i\to N_0\times D^3$.
(Indeed, if $W$ is a cobordism with a boundary component $M$, then
$\tau(W,M)=-\tau(r)$ where $r\co W\to M$ is a deformation
retraction. By the composition formula for torsion~\cite{Coh}
$\tau(f\tinycirc r)=\tau(f)+f_\ast\tau(r)$, so we need to find $W$ 
with $\tau(f)=f_\ast\tau(W,M)$, or equivalently, since $f_\ast$
is an isomorphism we need $(f_\ast)^{-1}\tau(f)=\tau(W,M)$,
which can be arranged as any element in $\mathrm{Wh}(\pi_1(M)$ 
can be realized as $\tau(W, M)$ for some $W$.)
 
Note that $F_i$ still has trivial normal invariant,  
because $Q_i\times I$ can be thought of as an h-cobordism
with boundaries $Q_i$, $M_i\times D^3$, 
so the maps $F_i$, $H_i$ are normally cobordant 
(as explained in~\cite{Wal-book} before theorem 1.3).
By Wall's $\pi-\pi$-theorem $Q_i$ is diffeomorphic to 
$M_0\times D^3$. Restricting to interiors gives a desired 
diffeomorphism
$M_i\times\mathbb R^3\to M_0\times\mathbb R^3$.
\end{proof}

\begin{rmk}\label{rmk: exotic sphere times R3}
The proof shows that if a homotopy equivalence has trivial normal
invariant, then its product with ${\bf id}(D^3)$ is homotopic
to a diffeomorphism, e.g this implies to the standard 
homeomorphism $\Sigma^k\to S^k$ where $\Sigma_k$ is a homotopy
sphere that bounds a parallelizable manifold, so that 
$\Sigma^k\times\ D^3$ and $S^k\times D^3$ are diffeomorphic.
\end{rmk}

\begin{rmk}
Propositions~\ref{prop: tang and trivial norm inv}, 
\ref{prop: equal norm inv implies diffeo}
imply that if there exist a manifold $N$ with 
infinitely many codimension $\le 3$ souls $S_i$, then after 
passing to a subsequence $S_i\times \mathbb R^3$
are all diffeomorphic, so one also gets infinitely many
codimension $3$ souls with trivial normal bundles,
which proves part (2) of Theorem~\ref{intro-thm: codim <4 finiteness}.
By contrast Proposition~\ref{prop: finite ambig} below shows that
an analogous statement fails in codimension $2$:
indeed, for $r>1$ there are infinitely many homotopy
$\mathbb {RP}^{4r-1}$ with diffeomorphic canonical line 
bundles, hence the products of the line bundles with $\mathbb R$
are also diffeomorphic, yet no two non-diffeomorphic 
homotopy $\mathbb {RP}^{4r-1}$ are h-cobordant 
because $\mathrm{Wh}(\mathbb Z_2)=0$.
\end{rmk}

\begin{prop}\label{prop: finite ambig}
Suppose $\{M_i\}$ is a sequence of closed 
manifolds of dimension $\ge 5$ with finite fundamental group $G$. 
Then $\{M_i\}$ lies in
finitely many h-cobordism types if and only if 
$\{M_i\times\mathbb R^2\}$ lies in finitely many diffeomorphism types.
\end{prop}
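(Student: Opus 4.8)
The plan is to mimic the structure of the proof of Proposition~\ref{prop: equal norm inv implies diffeo}, replacing ``tangential homotopy type'' by ``h-cobordism type'' and ``$\times\,\mathbb R^3$'' by ``$\times\,\mathbb R^2$'', but now paying careful attention to Whitehead torsion because $\mathrm{Wh}(G)$ need not vanish. For the ``only if'' direction, suppose $\{M_i\}$ lies in finitely many h-cobordism types; after passing to a subsequence all $M_i$ are h-cobordant to a fixed $M_0$ via h-cobordisms $W_i$. Crossing with $\mathbb R^2$ makes the picture flexible: $W_i\times\mathbb R^2$ provides a proper h-cobordism, but more usefully, a compact h-cobordism $W_i$ from $M_i$ to $M_0$ yields a homotopy equivalence $M_i\to M_0$ whose torsion is $\tau(W_i,M_0)\in\mathrm{Wh}(G)$; the product with $\mathrm{id}(D^2)$ is then a homotopy equivalence $M_i\times D^2\to M_0\times D^2$ whose torsion is the image of $\tau(W_i,M_0)$ under the transfer/product map $\mathrm{Wh}(G)\to\mathrm{Wh}(\pi_1(M_0\times D^2))=\mathrm{Wh}(G)$, which is an isomorphism (multiplication by the Euler characteristic of $D^2$, i.e.\ the identity up to sign). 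The key gain is that we can absorb this torsion: attaching to $\partial(M_i\times D^2)=M_i\times S^1$ a suitable h-cobordism realizing the needed Whitehead element turns $M_i\times D^2$ into a manifold $Q_i$ with $\mathring Q_i=M_i\times\mathbb R^2$ and with a \emph{simple} homotopy equivalence $F_i\co Q_i\to M_0\times D^2$ (exactly as in the parenthetical computation in the proof of Proposition~\ref{prop: equal norm inv implies diffeo}, using that every element of $\mathrm{Wh}(G)$ is realized as $\tau(W,M_i\times S^1)$ for some $W$; here one needs $\pi_1(M_i\times S^1)$, but since the boundary inclusion $M_i\times S^1\hookrightarrow M_i\times D^2$ is a $\pi_1$-iso onto $G$, torsion can equivalently be realized on the boundary).

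Next I would arrange the normal invariant of $F_i$ to be trivial, or at least constant along a subsequence. An h-cobordism $W_i$ from $M_i$ to $M_0$ is in particular a normal cobordism between $M_i$ and $M_0$, so the homotopy equivalence $M_i\to M_0$ it induces is \emph{normally cobordant to the identity}; hence its product with $\mathrm{id}(D^2)$ has trivial normal invariant, and $F_i$ (differing from this product by a deformation retraction $Q_i\to M_i\times D^2$, which is an h-cobordism collapse and hence normally trivial) also has trivial normal invariant. Now $F_i$ is a \emph{simple} homotopy structure on $M_0\times D^2$ with trivial normal invariant, and the pair $(M_0\times D^2,\partial)$ satisfies the $\pi$-$\pi$ condition, so by Wall's $\pi$-$\pi$ theorem $F_i$ is homotopic to a diffeomorphism $Q_i\to M_0\times D^2$. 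Restricting to interiors gives $M_i\times\mathbb R^2\cong M_0\times\mathbb R^2$, proving this direction.

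For the ``if'' direction, suppose $\{M_i\times\mathbb R^2\}$ lies in finitely many diffeomorphism types; pass to a subsequence with a fixed diffeomorphism type and diffeomorphisms $\psi_i\co M_i\times\mathbb R^2\to M_0\times\mathbb R^2$. This is a setting already partially handled by the general machinery of Sections~\ref{sec: vb with diffeo tot spaces}--\ref{sec: bundles of rank <4}: $M_i\times\mathbb R^2$ is the total space of the trivial rank-$2$ bundle over $M_i$ and over $M_0$, so Corollary~\ref{cor: codim 2 and tang hom eq} gives a \emph{tangential} homotopy equivalence $g_i\co M_i\to M_0$ with $g_i^\#(\text{trivial})\cong(\text{trivial})$, and Proposition~\ref{prop: h.e. on bases has trivial norm inv} shows $g_i$ has trivial normal invariant. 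Then $g_i$ represents an element of the structure set ${\bf S}^h(M_0)$ lying in the orbit of $L^h_{n+1}(G)$ under the action fixing the identity; since $G$ is finite, $L^h_{n+1}(G)$ is finitely generated, but the subtlety is that its \emph{action orbit} need only be finite when we can control it — here I would instead argue directly with torsion: the $g_i$ may be taken to differ by elements of $L^h_{n+1}(G)$, but crucially each $g_i$, being a homotopy equivalence of closed manifolds with $\pi_1=G$, has a well-defined Whitehead torsion $\tau(g_i)\in\mathrm{Wh}(G)$, and two homotopy-equivalent closed manifolds are h-cobordant iff the equivalence can be chosen with torsion satisfying the symmetry $\tau=(-1)^n\tau^\ast$ type condition (Lemma~\ref{lem: h-cob}). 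Since $\mathrm{Wh}(G)$ is finite for finite $G$, after a further subsequence $\tau(g_0^{-1}\circ g_i)\in\mathrm{Wh}(G)$ is constant, hence (adjusting $g_i$ by a self-homotopy-equivalence of $M_0$ realizing that torsion, or by the s-cobordism argument as in Theorem~\ref{thm: 1-connected or even dim pair finiteness}(ii)) we may take $g_0^{-1}\circ g_i$ to be simple; being also normally trivial and an element of ${\bf S}^s(M_0)$ in the image of the $L^s_{n+1}(G)$-action, and $L^s_{n+1}(G)$ being finite for finite $G$, we pass to yet another subsequence on which $g_0^{-1}\circ g_i$ represents a fixed element of ${\bf S}^s(M_0)$. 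Then $M_i$ and $M_j$ are s-cobordant for $i,j$ in this subsequence, hence diffeomorphic by the s-cobordism theorem; in particular all $M_i$ in the subsequence lie in one h-cobordism type, and the original sequence lies in finitely many.

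\textbf{Main obstacle.} The delicate point is the torsion bookkeeping in both directions: in the ``only if'' direction one must verify that the Whitehead element obstructing simplicity of $h_i\times\mathrm{id}(D^2)$ can genuinely be killed by attaching an h-cobordism along $M_i\times S^1$ \emph{without disturbing the trivial normal invariant}, which requires that the h-cobordism collapse be a normal bordism of the identity (true, but worth stating carefully); in the ``if'' direction the subtlety is that one does not get ``finitely many h-cobordism types'' merely from finiteness of $L^s$ and $\mathrm{Wh}$ unless one also knows the relevant structure-set orbit is finite, which is why I route the argument through the torsion symmetry of Lemma~\ref{lem: h-cob} plus the finiteness of $\mathrm{Wh}(G)$ and $L^s_{n+1}(G)$ for finite $G$ rather than quoting an abstract finiteness of ${\bf S}^h(M_0)$, which is false in general.
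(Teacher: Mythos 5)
Both directions of your proof differ from the paper's and both contain genuine errors, so let me flag them.

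\textbf{The ``only if'' direction.} Your route via the $\pi$-$\pi$ theorem does not work here: for $Y=M_0\times D^2$ the boundary inclusion $M_0\times S^1\hookrightarrow M_0\times D^2$ induces $G\times\mathbb Z\to G$ on fundamental groups, which is surjective with kernel $\mathbb Z$, not an isomorphism. So the hypothesis of Wall's $\pi$-$\pi$ theorem fails (this is precisely why rank $2$ is delicate throughout Section~\ref{sec: bundles of rank <4}; the paper's remark after Proposition~\ref{prop: h.e. on bases has trivial norm inv} points out the rank-$2$ case needs separate treatment). There is also no need for surgery in this direction: if $M_i$ and $M_j$ are h-cobordant via $W$, then $\mathrm{Int}(W)$ is diffeomorphic to both $M_i\times\mathbb R$ and $M_j\times\mathbb R$ by the weak h-cobordism theorem, and crossing once more with $\mathbb R$ finishes the argument. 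This is exactly what the paper does.

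\textbf{The ``if'' direction.} Your argument rests on the claim that ``$\mathrm{Wh}(G)$ is finite for finite $G$.'' This is false; by Bass's theorem $\mathrm{Wh}(G)$ is a finitely generated abelian group whose rank can be positive, e.g.\ $\mathrm{Wh}(\mathbb Z_5)\cong\mathbb Z$. Indeed the paper's Theorem~\ref{intro-thm: space of metrics h-cob}(ii) and the remark following Theorem~\ref{thm: even dim inf many comp} are precisely about finite groups with \emph{infinite} Whitehead group, so this is not a peripheral fact. Without this finiteness your passage ``after a further subsequence $\tau(g_0^{-1}\circ g_i)$ is constant'' has no justification, and the whole structure-set route stalls. (For the same reason $L^s_{n+1}(G)$ need not be finite when $n+1$ is even, because of the multisignature, so the last step of your argument is also suspect when $n$ is odd.) The paper avoids all of this by a completely different mechanism: from a diffeomorphism $M_i\times\mathbb R^2\cong M_0\times\mathbb R^2$ it extracts, via Lemma~\ref{lem: s1-bundles are hcobordant} and Proposition~\ref{prop: normal bundles are fiber homotopic}, an h-cobordism of the circle bundles $M_i\times S^1$ and $M_0\times S^1$ that preserves the circle factor up to homotopy; passing to the infinite cyclic cover yields a \emph{proper} h-cobordism between $M_i\times\mathbb R$ and $M_0\times\mathbb R$. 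Siebenmann's classification says proper h-cobordisms with fixed end $M_0\times\mathbb R$ are classified by $\tilde K_0(\mathbb ZG)$, and Swan's theorem gives that $\tilde K_0(\mathbb ZG)$ \emph{is} finite for finite $G$. This is the finiteness that actually holds, and it is different from the one you invoked.

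In short: the ``only if'' direction should be done by the weak h-cobordism theorem, not by surgery; and the ``if'' direction should use proper h-cobordism theory and finiteness of $\tilde K_0(\mathbb ZG)$, not a (false) finiteness of $\mathrm{Wh}(G)$.
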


\begin{proof}
The ``only if'' direction follows because $M_i$, $M_j$
are h-cobordant, then their products with $\mathbb R$ are diffeomorphic
(e.g. by the weak h-cobordism theorem). For the ``if'' direction
we pass to a subsequence so that all $M_i\times\mathbb R^2$ are
diffeomorphic

By Lemma~\ref{lem: s1-bundles are hcobordant}
each $M_i\times S^1$ is $h$-cobordant to $M_0\times S^1$, 
and the proof of 
Proposition~\ref{prop: normal bundles are fiber homotopic}   
shows that the fiber homotopy equivalence 
$M_i\times S^1\to M_0\times S^1$ 
covers the canonical homotopy equivalence $M_i\to M_0$, so
the circle factor is preserved up to homotopy. Passing to the infinite
cyclic cover corresponding to the circle factor, 
we get a proper h-cobordism
between $M_i\times\mathbb R$ and $M_0\times\mathbb R$.
The proper $h$-cobordisms with one end $M_0\times\mathbb R$
are classified by $\tilde K_0(\mathbb ZG)$~\cite{Sie}.
The group $G$ is finite, and hence so is $\tilde K_0(\mathbb ZG)$~\cite{Swa}. 
Thus $M_i\times \mathbb R$ fall into finitely many 
diffeomorphism classes, and hence $\{M_i\}$ is finite 
up to h-cobordism because closed manifolds 
are h-cobordant if their products with $\mathbb R$ are diffeomorphic
(this is well-known and follows from
Lemma~\ref{lem: s1-bundles are hcobordant}). 
\end{proof}

\begin{rmk}
Milnor noted in~\cite[Theorem 11.5]{Mil-wh} 
that if $B$ is a closed orientable
manifold with finite fundamental group and 
even dimension $\ge 5$, then 
the h-cobordism class of $B$ 
contains only finitely many diffeomorphism classes. 
\end{rmk}

\begin{rmk}\label{rmk: ko=wh=0}
It is well-known that 
a closed simply-connected manifold of dimension $\ge 5$
can be recovered from its product with $\mathbb R^2$.
The proof of Proposition~\ref{prop: finite ambig} 
immediately gives a slight generalization: 
if $M_0$, $M_1$ are closed $n$-manifolds with $n\ge 5$ and
$\pi_1(M_0)=G=\pi_1(M_1)$ such that $M_1\times\mathbb R^2$ and 
$M_0\times\mathbb R^2$ are diffeomorphic, and
$\mathrm{Wh}(G)=0=\tilde K_0(\mathbb ZG)$,
then $M_1$ and $M_0$ are diffeomorphic.
This applies e.g. if $G$ is $\mathbb Z_n$ for $n=2, 3, 4, 6$,
$\mathbb Z_2\times \mathbb Z_2$, $D_{2m}$ for $m=3, 4, 6$,
as well as if $G$ is torsion-free and virtually abelian
(see references in~\cite{LucSta}). 
%K-theory Computations of K- and L-theory of cocompact planar groups]
\end{rmk}

\section{Smooth knots and disconnectedness of moduli spaces}
\label{sec: smooth knots}

In this section we use results of Haefliger and Levine on
smooth knots to investigate 
how many components of $\mathfrak R^u_{\sec\ge 0} (N)$
can be visited by the $\mathrm{Diff}(N)$-orbit of a given metric,
and to give an example of disconnected 
$\mathfrak M^u_{\sec\ge 0} (N)$.

We start be recalling some results on smooth knots.
According to Haefliger~\cite{Hae62, Hae66}, 
for $n\ge 5$ and $k\ge 3$,
isotopy classes of (smooth) embeddings of $S^n$ into $S^{n+k}$ 
form an abelian group $\Sigma^{n+k,n}$ under connected sum, which 
vanishes in metastable range (i.e. for $2k>n+3$), 
and equals to $\mathbb Z$ for $n=4r-1$ and 
$k=2r+1$. In general, Levine~\cite{Lev} showed 
that $\Sigma^{n+k,n}$ is either finite, or virtually cyclic,
and the latter occurs if and only if $n=4r-1$ and $3\le k\le 2r+1$.

For $k\ge 3$, Hirsch~\cite[Theorem 8]{Hir}
showed that any smooth embedding of $S^n$ into $S^{n+k}$ 
can be ambiently isotoped to have a
closed tubular neighborhoods equal to the closed tubular neighborhood
$S^n\times D^k$ of a standard $S^n\subset S^{n+k}$. Of course,
different elements of $\Sigma^{n+k,n}$ define
non-isotopic embeddings of $S^n$ into $S^n\times \mathrm{Int}(D^k)$,
because any isotopy of $S^n$ inside $S^n\times \mathrm{Int}(D^k)$ 
is also an isotopy in $S^{n+k}$.

Levine~\cite{Lev}
%Levine, A Classification of Differentiable Knots
showed that assigning to each element of 
$\Sigma^{n+k,n}$ the isomorphism class of its normal bundle
defines a homomorphism $\Sigma^{n+k,n}\to \pi_n(BSO_k)$. 
The image $N_0(n,k)$ of this homomorphism is described 
in~\cite[Theorem 6.9]{Lev} as 
the image of the kernel of the stabilization
homomorphism $\pi_n(SG_k, SO_k)\to \pi_n(SG, SO)$
under the boundary map  $\pi_n(SG_k, SO_k)\to \pi_{n-1}(SO_k)$
in the long exact sequence of the pair. 

The group $N_0(n,k)$ is finite for any $k$, $n$. Indeed,
since tubular neighborhoods of the embeddings $e_i\co S^n\to S^{n+k}$  
can be chosen to equal the same $S^n\times D^k$, 
results of Section~\ref{sec: vb with diffeo tot spaces} 
imply that the canonical homotopy equivalence 
$e_i(S^n)\to S^n\times\{0\}$ 
pulls back the normal Euler class, which is trivial.
Also any homotopy equivalence $e_i(S^n)\to S^n\times\{0\}$ 
pulls back the stable normal bundles because they are trivial.
It follows that the normal bundle to $e_i$ has trivial
Euler and Pontryagin classes, which determine an oriented
vector bundle up to finite ambiguity.

\begin{thm} Let $g$ be any complete metric of $\sec\ge 0$ on  
$N:=S^n\times\mathbb R^k$ with soul $S\times\{0\}$. 
If $r\ge 2$ is an integer and
$n=4r-1$ and $3\le k\le 2r+1$, then metrics
that are isometric to $g$ lie in 
infinitely many components
$\mathfrak R^u_{\sec\ge 0} (N)$
and in the same component of $\mathfrak R^u (N)$.
\end{thm}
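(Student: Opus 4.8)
The statement asserts that a fixed metric $g$ on $N = S^n \times \mathbb{R}^k$ (in the dimension range $n = 4r-1$, $3 \le k \le 2r+1$) can be pulled back by infinitely many diffeomorphisms of $N$ so that the resulting metrics land in infinitely many components of $\mathfrak{R}^u_{\sec\ge 0}(N)$, yet all of these lie in a single component of $\mathfrak{R}^u(N)$. The idea is to use the infinitude of the Levine group $\Sigma^{n+k,n}$ (which is virtually cyclic, hence infinite, precisely in this range) to produce infinitely many smoothly inequivalent embeddings $e_i \colon S^n \hookrightarrow S^n \times \mathrm{Int}(D^k) \subset N$. First I would invoke Hirsch's theorem to arrange that each $e_i$ has a tubular neighborhood equal to the standard $S^n \times D^k$; the self-diffeomorphism of $S^n \times D^k$ realizing this, extended by the identity on the complementary collar $S^n \times (D^k \setminus \mathrm{Int}(D^k_{1/2}))$ — or rather, extended over all of $N = S^n \times \mathbb{R}^k$ using the product structure outside the tube — gives diffeomorphisms $\Phi_i \colon N \to N$ carrying the standard soul $S^n \times \{0\}$ onto $e_i(S^n)$.

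\textbf{Distinguishing the components of $\mathfrak{R}^u_{\sec\ge 0}(N)$.} Set $g_i := \Phi_i^* g$. Each $g_i$ is a complete nonnegatively curved metric on $N$ with soul $e_i(S^n)$ (souls are carried to souls by diffeomorphisms). By Theorem~\ref{intro-thm: ambient isotopy}(i), if $g_i$ and $g_j$ were in the same component of $\mathfrak{R}^u_{\sec\ge 0}(N)$, their souls would be ambiently isotopic in $N$; since an ambient isotopy of $N$ carrying $e_i(S^n)$ to $e_j(S^n)$ restricts, on a large $S^{n+k}$ containing the picture, to an isotopy of knots, this would force $[e_i] = [e_j]$ in $\Sigma^{n+k,n}$ (modulo the subtlety that an arbitrary ambient isotopy of $N = S^n\times\mathbb{R}^k$ need not extend to $S^{n+k}$, but one can compactify or argue directly with the knot invariants — rational Pontryagin data of the complement, or the Browder–Livesay/Haefliger invariant — which are preserved). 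To make this clean I would instead note that the relevant invariant distinguishing the $e_i$ — say a $\mathbb{Z}$-valued Haefliger-type invariant detecting the virtually-cyclic direction in $\Sigma^{n+k,n}$ — is an invariant of the diffeomorphism type of the pair $(N, \text{soul})$, which is locally constant on $\mathfrak{R}^u_{\sec\ge 0}(N)$ by Theorem~\ref{intro-thm: ambient isotopy}(ii). Choosing the $e_i$ to take infinitely many distinct values of this invariant yields infinitely many components.

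\textbf{Collapsing in $\mathfrak{R}^u(N)$.} This is the step I expect to be the main obstacle, since in $\mathfrak{R}^u$ metrics with different asymptotic geometry typically lie at infinite uniform distance. The resolution is the same device as in Proposition~\ref{prop: guijarro modification}: by~\cite{Gui} one modifies each $g_i$ outside a small tube around its soul so that outside a slightly larger tube it is the Riemannian product of a ray $\mathbb{R}_+$ with a fixed metric on the normal sphere bundle — and since all the $e_i$ have the \emph{same} standardized tubular neighborhood $S^n \times D^k$ (Hirsch), with the \emph{same} trivial normal bundle (so the normal sphere bundle is the standard $S^n \times S^{k-1}$ with a fixed product metric), the modified metrics $g_i'$ all agree with one common metric $g_\infty$ outside a fixed compact set $S^n \times D^k_{R}$. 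Then for any $i,j$ the convex combination $t g_i' + (1-t)g_j'$ is a path in $\mathfrak{R}^u(N)$ (the uniform distance between $g_i'$ and $g_j'$ is finite — indeed they differ only on a compact set), joining $g_i'$ to $g_j'$; and within $\mathfrak{R}^u_{\sec\ge 0}(N)$ each $g_i$ is joined to $g_i'$ by the Guijarro modification, which can be performed continuously. Hence all the $g_i'$ (and thus the metrics isometric to $g$, after replacing $g$ by a suitably modified isometric representative) lie in one component of $\mathfrak{R}^u(N)$, while remaining in infinitely many components of $\mathfrak{R}^u_{\sec\ge 0}(N)$. The one point requiring care is that the Guijarro modification changes the metric to one only isometric to a modification of $g$, not to $g$ itself — but the statement only asks for metrics isometric to $g$, so applying $\Phi_i$ to the standardized metric $g_\infty$ (rather than to $g$) and noting $g_\infty$ is isometric to $g$ after an initial harmless modification resolves this.
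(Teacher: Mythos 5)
Your proposal follows the same overall strategy as the paper: exploit the infinitude of $\Sigma^{n+k,n}$ (Levine), standardize tubular neighborhoods via Hirsch, transport the metric $g$ to produce non-isotopic souls, detect components via Lemma~\ref{lem: souls amb isot}, and collapse to one component of $\mathfrak R^u(N)$ by a Guijarro-type modification as in Proposition~\ref{prop: guijarro modification}. However, there is a genuine gap at the very first step. You treat triviality of the normal bundle of $e_i$ as automatic, writing ``the self-diffeomorphism of $S^n \times D^k$ realizing this'' immediately after invoking Hirsch and later asserting ``the same trivial normal bundle.'' But Hirsch's theorem only produces an ambient isotopy making the closed tubular neighborhood of $e_i$ equal the \emph{subset} $S^n\times D^k$; it says nothing about the disk-bundle structure over $e_i(S^n)$ being trivial. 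Since $e_i$ is nullhomotopic, $\nu_i$ is stably trivial, but rank-$k$ bundles over $S^n$ are not determined by their stable class, and indeed in this range the normal-bundle homomorphism $\Sigma^{n+k,n}\to\pi_n(BSO_k)$ has nonzero image (for example $N_0(7,4)\cong\mathbb Z_4$ per Levine and the computation in Section~\ref{sec: smooth knots}). If $\nu_i$ is nontrivial, no diffeomorphism of $N$ can carry $S^n\times\{0\}$ to $e_i(S^n)$, since such a map would pull back normal bundles; your $\Phi_i$ would not exist and $g_i = \Phi_i^*g$ would be undefined.

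The missing input is exactly what the paper invokes: Levine's observation that the normal-bundle map is a homomorphism whose image $N_0(n,k)$ is \emph{finite}. (The finiteness rests on Proposition~\ref{prop: normal bundles are fiber homotopic}: two knots with the same tubular neighborhood $S^n\times D^k$ have fiber homotopy equivalent normal sphere bundles, hence equal Euler and rational Pontryagin classes, and these determine an oriented bundle over $S^n$ up to finite ambiguity.) Since $\Sigma^{n+k,n}$ is infinite while $N_0(n,k)$ is finite, the kernel of the normal-bundle map contains infinitely many isotopy classes, and one must restrict to these — the knots with genuinely trivial normal bundle — before constructing $\Phi_i$ (or, equivalently, before identifying $N$ with the total space of $\nu_i$ and pushing forward $g$). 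With that one restriction added, the rest of your argument closes: the compactness observation that any ambient isotopy in $N$ between souls yields a knot isotopy in some $S^n\times D^k_R\subset S^{n+k}$, and the standardization of the metrics outside a compact set so that convex combination gives a path in $\mathfrak R^u(N)$, both match the paper.
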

\begin{proof}
%It is shown in [Levine] that the above-mentioned
%homomorphism $\Sigma^{n+k,n}\to \pi_n(BSO(k))$ factors
%through the group $\pi_n(SG_k, SO(k))$, which is finite
%under above assumptions on $k, n$, as follows from 
%Corollary 6.7 and Theorem 6.9 in [Levine], in particular, 
%the group $N_0(n,k)$ is finite.

As $\Sigma^{n+k,n}$ is infinite,
and the above-mentioned homomorphism
$\Sigma^{n+k,n}\to \pi_n(BSO_k)$ has finite image,
its kernel contains
infinitely many isotopy classes of embeddings of
$S^n$ into $S^{n+k}$ with trivial normal bundle.
By above-mentioned result in~\cite{Hir}, we ambiently
isotope the embeddings into infinitely many pairwise
non-isotopic embeddings from $S^n$ to $S^n\times \mathrm{Int}(D^k)$,
which is a closed tubular neighborhood for all the 
zero sections. 
Equipping their normal bundles with the metric $g$, we get
infinitely many metrics on $S^n\times\mathbb R^k$
with pairwise non-isotopic souls, and the metrics lie
in different components of 
$\mathfrak R^u_{\sec\ge 0} (S^n\times\mathbb R^k)$ by 
Lemma~\ref{lem: souls amb isot}, and modifying the metrics
as in Proposition~\ref{prop: guijarro modification} 
they can be arranged to lie  
in the same component of $\mathfrak R^u (N)$.
\end{proof}

By convention we treat $S^1$, $\mathbb R$, and a point as 
having $\sec\ge 0$.

\begin{thm}\label{thm: knots mod space}
If $L$ is any closed manifold of $\sec\ge 0$, then the moduli space
$\mathfrak M^u_{\sec\ge 0} (S^7\times \mathbb R^4\times L)$ 
has more than one component with
metrics whose souls are diffeomorphic to $S^7\times L$,
and which lie in the same component of $\mathfrak M^u (N)$.
\end{thm}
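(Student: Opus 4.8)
The plan is to build the metrics from knotted embeddings of $S^7$ in $S^{11}$ and transplant them to $S^7\times\mathbb R^4\times L$, then use the ambient-isotopy machinery of Section~\ref{sec: mod sp} to separate moduli components while keeping everything in one component of the full moduli space. First I would recall that Haefliger's group $\Sigma^{11,7}=\Sigma^{4r+3,4r-1}$ with $r=2$ is isomorphic to $\mathbb Z$, hence infinite; the normal-bundle homomorphism $\Sigma^{11,7}\to\pi_7(BSO_4)$ has finite image $N_0(7,4)$ by the finiteness argument already given in the excerpt, so its kernel contains infinitely many isotopy classes of embeddings $S^7\hookrightarrow S^{11}$ with \emph{trivial} normal bundle. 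These embeddings represent distinct elements of $\Sigma^{11,7}$, so in particular infinitely many of them are pairwise non-isotopic; pick just two, $e_0$ and $e_1$, say with $e_0$ the standard embedding and $e_1$ a nontrivial knot in the kernel.

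Next I would apply Hirsch's theorem (\cite[Theorem 8]{Hir}, quoted above) to ambiently isotope $e_0,e_1$ so that both have closed tubular neighborhood equal to the standard $S^7\times D^4\subset S^{11}$; since an isotopy inside $S^7\times\mathrm{Int}(D^4)$ is an isotopy in $S^{11}$, the two zero sections $e_0(S^7),e_1(S^7)$ are non-isotopic \emph{in $S^7\times\mathbb R^4$}. Taking the product with $L$, I get two embeddings of $S^7\times L$ into $S^7\times\mathbb R^4\times L$, still non-ambiently-isotopic there (an ambient isotopy of the product would, after projecting away the compact factor appropriately, or by a covering/degree argument on the $L$-slices, yield one of $e_0,e_1$ — this is the one point that needs a short argument, see below). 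Now equip the normal bundle of each embedded $S^7\times L$ — which is trivial of rank $4$, hence the product of a trivial $\mathbb R^4$-bundle with the given $\sec\ge0$ metric on $L\times S^7$ — with the metric $g$ restricted from $S^7\times\mathbb R^4$, giving two complete metrics of $\sec\ge0$ on $N:=S^7\times\mathbb R^4\times L$ whose souls are $e_0(S^7\times L)$ and $e_1(S^7\times L)$, both diffeomorphic to $S^7\times L$. By Theorem~\ref{intro-thm: ambient isotopy}(iii) (equivalently Lemma~\ref{lem: souls amb isot}), since these souls are not ambiently isotopic in $N$, the two metrics lie in different connected components of $\mathfrak M^u_{\sec\ge0}(N)$. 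Finally, to land them in the same component of $\mathfrak M^u(N)$: the normal sphere bundle of each soul is $S^7\times L\times S^3$, which need not be simply connected, so I cannot cite Proposition~\ref{prop: guijarro modification} verbatim; instead I would note that the two souls have diffeomorphic (trivial) normal bundles, so by the Guijarro modification (\cite{Gui}) both metrics can be changed outside a small tubular neighborhood of the soul to become, outside a larger neighborhood, the Riemannian product of a ray with a fixed metric on $S^7\times L\times S^3$, and then a convex combination joins them — here one uses that the ambient manifold $N$ with its two product structures admits a self-diffeomorphism matching the tubular neighborhoods (the two disk bundles are both $S^7\times L\times D^4$), exactly as in the proof of Proposition~\ref{prop: guijarro modification}, but now invoking only the diffeomorphism of total spaces rather than the h-cobordism step.

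The main obstacle is the claim that $e_0(S^7\times L)$ and $e_1(S^7\times L)$ are not ambiently isotopic in $N=S^7\times\mathbb R^4\times L$, given that $e_0,e_1$ are not isotopic in $S^7\times\mathbb R^4$. The cheap direction — non-isotopy in $S^7\times\mathbb R^4$ implies non-isotopy of the products — is not automatic because an ambient isotopy of $N$ may mix the $L$ coordinate with the rest. The way I expect to handle this is: the two embeddings are distinguished by an invariant of the pair $(N,\text{soul})$ that survives crossing with $L$, namely one detects $e_1$ via a suitable knot invariant (e.g. the element of $\Sigma^{11,7}\cong\mathbb Z$, recovered from the diffeomorphism type of the complement or from the embedding up to concordance) and checks it is unchanged by $-\times L$; concretely, restricting an ambient isotopy to a single slice $S^7\times\mathbb R^4\times\{\ell\}$ and using that the soul meets this slice in $S^7\times\{0\}\times\{\ell\}$ (because the soul is $S^7\times\{0\}\times L$ in the product structure, transported by $e_i$) reduces an ambient isotopy in $N$ to one in $S^7\times\mathbb R^4$ up to a controlled error, contradicting the choice of $e_1$. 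Making this reduction fully rigorous — ensuring the slice of the soul is again a sphere knotted in the prescribed way, and that the ambient isotopy can be cut down to the slice — is the technical heart; everything else is a direct application of the lemmas already proved in Sections~\ref{sec: mod sp} and~\ref{sec: vb with diffeo tot spaces}.
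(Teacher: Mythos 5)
Your construction uses a genuinely different invariant than the paper, and the invariant you chose does not survive the passage to $N=S^7\times\mathbb{R}^4\times L$: this is a real gap, not just a technicality to be filled in.

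You take knots in the \emph{kernel} of the normal-bundle homomorphism $\Sigma^{11,7}\to\pi_7(BSO_4)$, i.e.\ with trivial normal bundle, so that every soul you produce is the zero section of a trivial $\mathbb{R}^4$-bundle over $S^7\times L$. The only thing distinguishing the pairs $(N,\text{soul})$ is then the embedded isotopy class of $e_i(S^7)\subset S^7\times\mathbb{R}^4$, and you must argue that this is still seen after crossing with $L$. Your proposed mechanism — restrict an ambient isotopy of $N$ to a slice $S^7\times\mathbb{R}^4\times\{\ell\}$ ``up to controlled error'' — does not work: an ambient isotopy of $N$ moves points freely in the $L$-direction, so the image of a slice at any later time is a wildly embedded submanifold of $N$, not a slice, and there is no canonical way to project it back to $S^7\times\mathbb{R}^4$ while remaining an isotopy of embeddings of $S^7$. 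Likewise, invoking ``the element of $\Sigma^{11,7}$ recovered from the complement or from concordance'' presupposes exactly what has to be proved, namely that such an invariant factors through the diffeomorphism type of $(N,\text{soul})$ and is unchanged by $-\times L$; for $\Sigma^{11,7}$-type invariants of codimension-$4$ knots this is not known and is plausibly false (product stabilization typically kills delicate smooth knot invariants). You correctly flag this step as ``the technical heart,'' but you have not supplied it, and I don't see how to.

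The paper sidesteps this entirely by working with the \emph{opposite} end of the Levine sequence: it shows $N_0(7,4)\cong\mathbb{Z}_4\neq 0$, hence there are knots $S^7\hookrightarrow S^{11}$ with \emph{nontrivial} normal bundle $\xi$ whose total space is still $S^7\times\mathbb{R}^4$. Then $N$ is simultaneously the total space of the trivial rank-$4$ bundle over $S^7\times L$ and of $p^\#\xi$, where $p\co S^7\times L\to S^7$ is the projection, and $p^\#\xi$ is nontrivial (restrict along $S^7\times\{\ell\}$). Since a self-diffeomorphism of $N$ carrying one zero section to the other would pull back one normal bundle to the other, and since pullback preserves triviality, no such diffeomorphism exists. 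The normal bundle of the soul is a diffeomorphism invariant of the pair that is manifestly unaffected by crossing with $L$ — that is exactly what makes the paper's route succeed where yours stalls. The price is the rather involved computation that $N_0(7,4)\neq 0$, which takes up most of the paper's proof; your route would have been shorter had the transfer-to-$L$ step been available.

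One smaller point: you also note that Proposition~\ref{prop: guijarro modification} cannot be cited verbatim because $S^7\times L\times S^3$ need not be simply connected. That observation is correct (and applies to the paper's citation of that Proposition as well); in your situation, where both tubular neighborhoods are literally $S^7\times L\times D^4$, the diffeomorphism-matching step is trivial, so the Guijarro deformation and convex-combination argument go through — but this is a secondary issue compared to the non-isotopy gap.
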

\begin{proof}
A key ingredient of the proof is that the group $N_0(7,4)$
is nontrivial. In table 7.2 of~\cite{Lev} Levine stated (without proof)
that $N_0(7,4)$ has order $4$; for completeness 
we justify Levine's assertion using results in~\cite{Hae66} and~\cite{Tod}.  
%The Annals of Mathematics, Second Series, Vol. 83, 
%No. 3 (May, 1966), pp. 402-436

Since $\pi_7(SG, SO)$ vanishes, $N_0(7,4)$ is
the image of $\pi_7(SG_4, SO_4)\to \pi_{6}(SO_4)$,
or equivalently, the kernel of $\pi_{6}(SO_4)\to \pi_{6}(SG_4)$.
To compute the latter look at the following commutative
diagram in which the rows are exact sequences of the fibrations, and vertical arrows are induced by inclusions.
\[
\xymatrix{
\pi_7(S^3)\ar[r]_{\text{zero}}\ar@{=}[d]&
\pi_6(SO_3)\ar[r]_{1-1}\ar[d]_i&
\pi_6(SO_4)\ar[r]_{\text{splits}}\ar[d]_j&
\pi_6(S^3)\ar@{=}[d]&\\
\pi_7(S^3)\ar[r]_{\text{zero}}&
\pi_6(SF_3)\ar[r]_{1-1}&
\pi_6(SG_4)\ar[r]_{\text{splits}}&
\pi_6(S^3)&
}
\]
The principal $SO_3$-bundle $SO_4\to S^3$ is trivial
(a principal bundle is trivial if it has a section
and obstructions to constructing a section lie in the groups
$H^{s+1}(S^3;\pi_s(SO_3))$ which are trivial. It follows that
the fibration $SG_4\to S^3$ has a section, which explains
how the horizontal arrows are labeled. Furthermore,
$i$ is the sum of $j$ and
the identify of $\pi_6(S^3)$; in particular, $i, j$ have
isomorphic kernels and cokernels, and again,
the kernels are isomorphic to $N_0(n,k)$.
The groups $\pi_{6}(SO_3)$, $\pi_{6}(SF_3)=\pi_9(S^3)$
equal to $\mathbb Z_{12}$, $\mathbb Z_3$, respectively,
so $i$ is either trivial, or onto. In the latter case,
the kernel of $j$ is $\mathbb Z_4$ as desired, so it remains
to show that $i$ cannot be trivial, or equivalently,
that the cokernel of $j$ is not $\mathbb Z_3$.
The cokernel of $j$ lies in $\pi_6(SG_4, SO_4)$, 
which fits in an exact sequence of homotopy groups of the triad 
(see~\cite[4.11]{Hae66})
\[
\pi_7(SG; SO, SG_4)\to \pi_6(SG_4, SO_4)\to\pi_6(SG, SO).
\]
Now $\pi_6(SG, SO)=\mathbb Z_2$ and by~\cite[Theorem 8.15]{Hae66}
$\pi_7(SG; SO, SG_4)=0$, which means that $\pi_6(SG_4, SO_4)$
has no subgroup isomorphic to $\mathbb Z_3$, as promised.

Actually, Haefliger omits the proof that $\pi_7(SG; SO, SG_4)=0$,
so we fill in the details. Consider the exact sequence
given by~\cite[Theorem 6.4]{Hae66}: 
\[
\pi_7(SF_4, SG_4)\to\pi_7(SG; SO, SG_4)\to \pi_7(SG; SO, SG_5)\to 
\pi_6(SF_4, SG_4).
\]
Here $\pi_7(SG; SO, SG_5)=0$ by~\cite[Corollary 6.6]{Hae66}, 
so it remains to see that
$\pi_7(SF_4, SG_4)=0$. To this end consider
the exact sequence of the pair:
\[
\pi_7(G_4)\to\pi_7(F_4)\to\pi_7(F_4,G_4)\to
\pi_6(G_4)\to\pi_6(F_4)\to\pi_6(F_4, G_4)
\]
As mentioned above, the fibration $G_4\to S^3$ has a section
so $\pi_i(G_4)$ splits as 
$\pi_i(F_3)\oplus\pi_i(S^3)=\pi_{i+3}(S^3)\oplus\pi_i(S^3)$.
In particular, $\pi_6(G_4)=\mathbb Z_3\oplus\mathbb Z_{12}$
and also $\pi_6(F_4)=\pi_{10}(S^4)=\mathbb Z_3\oplus\mathbb Z_{24}$.
By~\cite[Theorem 8.11]{Hae66} 
$\pi_6(F_4, G_4)\cong\pi_3(SO, SO_3)$ which equals to 
$\mathbb Z_2$, so exactness at $\pi_6(F_4)$ implies that
$\pi_6(G_4)\to\pi_6(F_4)$ is one-to-one.
On the other hand, the
inclusion $k\co F_3\to F_4$ factors through $G_3$, so it
suffices to show that $k_\ast\co\pi_7(F_3)\to \pi_7(F_4)$ is onto.
As $\pi_7(F_3)=\pi_{10}(S^3)=\mathbb Z_{15}
\cong\pi_{11}(S^4)=\pi_7(F_4)$, it is enough to see
that $k_\ast$ is one-to-one. In fact, the inclusion $F_3\to F$,
which factors through $F_4$, is an isomorphism on $\pi_7$, because
with the above identifications it corresponds to
the iterated suspension homomorphism $\pi_{10}(S^3)\to\pi_7^{\bf S}$,
and the latter homomorphism is an isomorphism at 
primes $3$, $5$ as shown in~\cite[page 177]{Tod}. 

Thus $N_0(7,4)\cong\mathbb Z_4$, and hence there are $4$
different oriented vector bundles over $S^7$ with
total space diffeomorphic to 
$S^7\times\mathbb R^4$. 

A result of Grove-Ziller~\cite{GroZil} implies that
their total spaces admits complete metrics of $\sec\ge 0$
with souls equal to the zero sections, because 
by the discussion preceding Corollary~3.13 of~\cite{GroZil},
all vector bundles in
$\pi_7(BSO_4)\cong\mathbb Z_{12}\oplus\mathbb Z_{12}$
classified by elements of orders $1$, $2$, $4$ 
admit complete metrics with $\sec\ge 0$ and souls equal to the
zero sections, which includes all elements of $N_0(7,4)$.
In particular, there exists a nontrivial bundle $\xi$ with these
properties. 

Thus $S^7\times L\times \mathbb R^4$ is the total space
of the vector bundle $p^\#\xi$, where
$p\co S^7\times L\to S^7$ is the projection on the first factor. 
The bundle $p^\#\xi$ is nontrivial because its pullback
via an inclusion $i\co S^7\to S^7\times L$ is $\xi$, which
is nontrivial.
If some self-diffeomorphism
of $S^7\times L\times \mathbb R^4$ could take 
$S^7\times L\times \{0\}$ to the zero section of 
$p^\#\xi$, then since trivial bundles are preserved by pullback,
it would follow that $p^\#\xi$ is trivial.
Hence by Lemma~\ref{lem: souls amb isot} 
the two metrics lie in different components of the moduli space. 
Modifying the metrics
as in Proposition~\ref{prop: guijarro modification} 
they can be arranged to lie  
in the same component of $\mathfrak R^u (N)$.
\end{proof}

\begin{rmk}
The above proof shows that $N_0(7,4)$ lies in $\pi_6(SO_3)$-factor
of $\pi_6(SO_4)\cong\pi_3(SO_3)\oplus\pi_6(S^3)$,
so we can explicitly write 
$N_0(7,4)=\{0,3,6,9\}\subset\mathbb Z_{12}=\pi_7(BSO_3)$.
Pulling back the bundle represented by 
$m\in\pi_7(BSO_3)$ by an degree $-1$ self-map
of $S^4$ yields the bundle represented by $-m$, so
up to the action of homotopy self-equivalences on the base $S^7$
we get only $3$ different bundles, namely $0,3,6$. It follows that
$\mathfrak M^u_{\sec\ge 0} (S^7\times \mathbb R^4)$ has
at least $3$ components with metrics whose 
souls are diffeomorphic to $S^7$. 
Note that 
since all homotopy $7$-spheres become diffeomorphic after multiplying 
by $\mathbb R^k$ with $k\ge 3$, the moduli 
space $\mathfrak M^u_{\sec\ge 0} (S^7\times \mathbb R^4)$ 
also has other components with souls diffeomorphic to those
homotopy $7$-spheres that admit metrics of $\sec\ge 0$. 
\end{rmk}

\section{Spaces of metrics and h-cobordisms}
\label{sec: h-cob}

In this section we study  
components of $\mathfrak R^u_{\sec\ge 0} (N)$
via various techniques related to h-cobordisms.

Here is a basic idea. If $W$ is an h-cobordism
of dimension $\ge 5$ with boundary components 
$M$, $M^\prime$, then by the weak h-cobordism theorem
there is a diffeomorphism 
from $\mathring W:=\mathrm{Int}(W)$ onto $M\times\mathbb R$,
taking $M$ to $M\times \{0\}$, and similarly for $M^\prime$. 
%(Indeed, think of
%$\mathring W$ as $W$ with open collar attached
%on $\d W$. By the weak h-cobordism theorem, if
%$\dim(W)\ge 5$, then $W\setminus M^\prime$
%is diffeomorphic to $M\times [0,1)$, so
%attaching an open collar along $M$ gives a diffeomorphism 
%of $\mathring W$ onto 
%$M\times\mathbb R$ taking $M$ to $M\times\{0\}$).
In particular,
if $M$, $M^\prime$ admit metrics with $\sec\ge 0$,
then $\mathring W$ has two complete metrics of $\sec\ge 0$
with souls isometric to $M$, $M^\prime$. 
Therefore, if $M$, $M^\prime$ are not diffeomorphic,
then $\mathfrak M^u_{\sec\ge 0} (\mathring W)$ is not connected
by Lemma~\ref{lem: souls amb isot}; this implies 
Example~\ref{ex: mil-lens}.
If $M$, $M^\prime$ are diffeomorphic, but 
the h-cobordism $W$ is nontrivial, then
$\mathfrak R^u_{\sec\ge 0} (\mathrm{Int}(W))$ is not connected
because the boundaries
of a nontrivial h-cobordism are not isotopic
(as explained e.g. in~\cite[Lemma 7.3]{BK-acta}).

We use~\cite{Coh, Mil-wh} as basic references on the Whitehead
torsion and h-cobordisms. The following lemma summarizes
the standard formulas that we need.

\begin{lem}
\label{lem: h-cob}
\textup{(i)}
If $W$ is an oriented h-cobordism with boundaries $M$, $M^\prime$,
and $r\co W\to M$ is a deformation retraction, then
the homotopy equivalence
$r\vert_{M^\prime}\co M^\prime\to M$ has torsion 
$-\tau(W, M)+ (-1)^{\dim(M)}\tau^\ast (W, M)$.\newline
\textup{(ii)}
Let $W_1$, $W_2$ be two oriented h-cobordisms attached 
along their common boundary component to form 
an oriented h-cobordism $W$. Denote
the common component of $\d W$ and $\d W_k$ by $M_k$, and let
$i_k\co M_k\to W$ be the inclusion.
Then $i_{1\ast}\tau(W, M_1)=i_{1\ast}\tau(W_1, M_1)+
(-1)^{\dim(M)}i_{2\ast}\tau^\ast (W_2, M_2)$.
\end{lem}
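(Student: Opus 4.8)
The plan is to assemble both parts from three standard facts of simple homotopy theory, all available in~\cite{Coh, Mil-wh}: the composition formula $\tau(g\circ f)=\tau(g)+g_\ast\tau(f)$ for composable homotopy equivalences of finite complexes; the additivity formula $\tau(J,L)=\tau(J,K)+\tau(K,L)$ for a filtration $L\subset K\subset J$ by homotopy equivalences (with the evident inclusion-induced pushforwards suppressed); and Milnor's duality theorem, which for a compact oriented h-cobordism $(W;M,M')$ reads $\tau(W,M')=(-1)^{\dim M}\,\tau^\ast(W,M)$ in $\mathrm{Wh}(\pi_1 W)$ — here orientability of $W$ is precisely what guarantees that the involution $\ast$ is the untwisted one. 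I would treat all torsions as elements of $\mathrm{Wh}(\pi_1 W)$ via the inclusion-induced isomorphisms (these are isomorphisms because the ends of an h-cobordism are $\pi_1$-isomorphic to the total space); the maps $i_{1\ast}$, $i_{2\ast}$ in part (ii) merely record these identifications explicitly.

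For part (i): write $r|_{M'}$ as the composite of the inclusion $M'\hookrightarrow W$ with $r$, so the composition formula gives $\tau(r|_{M'})=\tau(r)+r_\ast\tau(W,M')$. Because $r$ is a homotopy inverse of the inclusion $M\hookrightarrow W$, applying the composition formula to $\mathrm{id}_M=r\circ(M\hookrightarrow W)$ gives $\tau(r)=-r_\ast\tau(W,M)$. Substituting, identifying Whitehead groups, and invoking Milnor's duality theorem to replace $\tau(W,M')$ by $(-1)^{\dim M}\tau^\ast(W,M)$ yields the stated formula.

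For part (ii): first note that $W$ deformation retracts onto $W_1$, since the h-cobordism $W_2$ deformation retracts onto its gluing face (a boundary component of $W_1$), and that $W_1$ in turn deformation retracts onto $M_1$; thus $M_1\subset W_1\subset W$ is a filtration by homotopy equivalences and the additivity formula gives $\tau(W,M_1)=\tau(W,W_1)+\tau(W_1,M_1)$. Next, choosing a CW structure in which $W_1$ and $W_2$ are subcomplexes meeting along the gluing face $M_2'$ of $W_2$, excision of the cells of $W_1$ identifies the $\mathbb Z[\pi_1 W]$-chain complex of $(W,W_1)$ with that of $(W_2,M_2')$, so $\tau(W,W_1)$ is the image of $\tau(W_2,M_2')$ in $\mathrm{Wh}(\pi_1 W)$. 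Finally apply Milnor's duality to $W_2$ to rewrite this as $(-1)^{\dim M}\tau^\ast(W_2,M_2)$, push everything forward via $i_{1\ast}$ and $i_{2\ast}$ (legitimate since $M_1\hookrightarrow W_1$ and $M_2\hookrightarrow W_2$ are $\pi_1$-isomorphisms, so these pushforwards coincide with the images occurring in the additivity and excision steps), and combine the three equalities.

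The work is entirely routine; the one place calling for care is consistency of sign conventions across the three input formulas — in particular whether $\tau$ of a pair is identified with the torsion of the inclusion or its negative, and the precise sign in Milnor's duality theorem (I have normalized so that $\tau(L\hookrightarrow K)=\tau(K,L)$ and $\tau(W,M')=(-1)^{\dim M}\tau^\ast(W,M)$) — together with verifying that the excision isomorphism in part (ii) is induced by $W_2\hookrightarrow W$, so that $i_{2\ast}$ is the correct pushforward. An error, if any, would most likely enter here rather than in the structure of the argument.
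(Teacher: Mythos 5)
Your proposal is correct and uses essentially the same three ingredients as the paper: the composition formula for torsion, the sum/excision formula for pairs (which the paper derives from the composition formula applied to the deformation retractions $R\co W\to W_1$ and $r\co W_1\to M_1$, but this is the same computation), and Milnor duality for oriented h-cobordisms. The only cosmetic difference is that you invoke the additivity formula directly while the paper re-derives it on the spot; the pushforward bookkeeping you flag is handled in the paper by applying $i_{1\ast}$ at the end and using $i_1\tinycirc r\simeq\mathrm{id}(W_1)$.
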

\begin{proof}
(i) 
First recall that if $r\co X\to Y$ is a deformation retraction 
of finite cell complexes, then $\tau(r)=-\tau(X,Y)$, indeed,
if $i\co Y\to X$ is the inclusion, then 
$\tau(r)=-r_\ast\tau(i)=-r_\ast i_\ast\tau(W, M)=-\tau(W, M)$~\cite[22.3, 22.5]{Coh}. Now to prove (i)
let $i$, $i^\prime$ be the inclusions of $M$, $M^\prime$
into $W$ so that~\cite[22.3, 22.4, 22.5]{Coh}
implies that 
\[
\tau({r}{\tinycirc} i^\prime)
=\tau(r)+r_\ast \tau(i^\prime)
=\tau(r)+r_\ast i^\prime_\ast\tau(W, M^\prime)
\]
where $i^\prime_\ast\tau(W, M^\prime)
=(-1)^{\dim(M)} i_\ast \tau^\ast(W, M)$ by 
duality~\cite[page 394]{Mil-wh} where $\ast$
is the standard involution of $\mathrm{Wh}(G)$
induced by $g\to g^{-1}$ in $G$.

(ii) Fix deformation retractions 
$R\co W\to W_1$, $r\co W_1\to M_1$. Below 
we slightly abuse notations by using $i_k$
to also denote the inclusion $M_k\to W_k$.
The proof of (i) and the composition formula
for torsion~\cite[22.4]{Coh} gives
\[
\tau(W, M_1)=-\tau({r}{\tinycirc} R)=-\tau(r)-r_\ast\tau(R)=
\tau(W_1, M_1)+r_\ast\tau(W, W_1)
\]
By excision~\cite[20.3]{Coh} and duality 
$\tau(W, W_1)=(-1)^{\dim(M)} i_{2\ast} \tau^\ast(W_2, M_2)$.
Applying $i_{1\ast}$ to both sides of the equation and recalling
that ${i_1}{\tinycirc} r$ is homotopic to ${\bf id}(W_1)$,
we get the desired formula.
\end{proof}

If $G$ is a finite group, then its Whitehead group
$\mathrm{Wh}(G)$ is fairly
well-understood, in particular,
results of Bass show that $\mathrm{Wh}(G)$ is a finitely
generated abelian group whose rank equals to the difference between
the number of conjugacy classes of subsets $\{g,g^{-1}\}\subset G$ 
and the number of conjugacy classes of cyclic subgroup
of $G$. There is a body of work computing $\mathrm{SK}_1(\mathbb ZG)$,
the (finite) torsion subgroup of $\mathrm{Wh}(G)$, see~\cite{Oli}
for details.

\begin{thm} \label{thm: even dim inf many comp}
Let $M$ be a closed oriented manifold
of even dimension $\ge 5$ with $\sec\ge 0$ such that
$G=\pi_1(M)$ is finite and $\mathrm{Wh}(G)$ is infinite.
Then $\mathfrak R^c_{\sec\ge 0} (M\times\mathbb R)$
has infinitely many components.
\end{thm}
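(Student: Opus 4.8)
The plan is to produce, for each element $\sigma$ of an infinite subset $T\subseteq\mathrm{Wh}(G)$, a complete metric $g_\sigma$ of $\sec\ge 0$ on $M\times\mathbb R$, and to attach to an arbitrary complete metric $g$ of $\sec\ge 0$ on $M\times\mathbb R$ a Whitehead‑torsion invariant $\iota(g)\in\mathrm{Wh}(G)$ which is constant on connected components of $\mathfrak R^c_{\sec\ge 0}(M\times\mathbb R)$ and satisfies $\iota(g_\sigma)=\sigma$; since $T$ is infinite this forces infinitely many components. The examples come from h‑cobordisms, as in Example~\ref{ex: mil-lens} and the discussion opening this section, the twist being that both ends of the h‑cobordism are required to be diffeomorphic to $M$ (so that the soul itself, not merely the ambient manifold, carries a metric of $\sec\ge 0$) and that the distinguishing invariant is the torsion rather than the diffeomorphism type of the soul.

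First I would fix the parameter set. Recall from the proof of Theorem~\ref{thm: 1-connected or even dim pair finiteness} that, by Wall's theorem, the standard involution $\ast$ acts trivially on $\mathrm{Wh}(G)/\mathrm{SK}_1(\mathbb{Z}G)$; since $\mathrm{Wh}(G)$ is finitely generated and infinite, this quotient is free abelian of positive rank, so $T:=\{\,t+t^\ast : t\in\mathrm{Wh}(G)\,\}$ is infinite and each of its elements is $\ast$‑invariant. Given $t\in\mathrm{Wh}(G)$, realize it as $\tau(W,M)$ for an h‑cobordism $(W;M,M')$ of dimension $n+1\ge 7$, and form the double $W\cup_{M'}\overline W$ obtained by gluing two copies of $W$ along $M'$; this is again an h‑cobordism, both of whose ends are diffeomorphic to $M$, and a computation with Lemma~\ref{lem: h-cob}(ii) gives its torsion at one end as $t+t^\ast$. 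Thus for each $\sigma\in T$ I obtain an h‑cobordism $(V_\sigma;M^-,M^+)$ with fixed identifications $M^\pm\cong M$ and $\tau(V_\sigma,M^-)=\tau(V_\sigma,M^+)=\sigma$.

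Next, since $\dim V_\sigma\ge 7$, the weak h‑cobordism theorem provides diffeomorphisms $\Phi^\pm\colon\mathring V_\sigma\to M\times\mathbb R$, each chosen to be the obvious product on the open collar of the corresponding boundary component, so that $\Phi^+$ carries a parallel push‑in $S^+$ of $M^+$, and $\Phi^-$ a push‑in $S^-$ of $M^-$, onto slices of $M\times\mathbb R$; I orient $\Phi^-$ so that $\Phi^-(S^-)$ lies on the $+\infty$ side of $\Phi^-(S^+)$. Transporting $g_M\times dt^2$ (for a fixed metric $g_M$ of $\sec\ge 0$ on $M$) back along $\Phi^+$ and then forward along $\Phi^-$ yields $g_\sigma:=(\Phi^+\circ(\Phi^-)^{-1})^{\ast}(g_M\times dt^2)$, a complete metric of $\sec\ge 0$ on $M\times\mathbb R$ whose soul is ambiently isotopic to $\Phi^-(S^+)$; the region of $M\times\mathbb R$ between this soul and the slice $\Phi^-(S^-)$ is the $\Phi^-$‑image of the region of $\mathring V_\sigma$ between the two push‑ins, i.e.\ a copy of $V_\sigma$ extended by a product collar, so by Lemma~\ref{lem: h-cob} the torsion of the h‑cobordism between a soul of $g_\sigma$ and a slice $M\times\{N\}$, $N\gg 0$, measured at the slice and transported to $\mathrm{Wh}(G)$ by the canonical identification, equals $\tau(V_\sigma,M^-)=\sigma$. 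I then define, for an arbitrary complete metric $g$ of $\sec\ge 0$ on $M\times\mathbb R$, the invariant $\iota(g)\in\mathrm{Wh}(G)$ to be exactly this: the torsion of the h‑cobordism pinched between a soul of $g$ and $M\times\{N\}$ for $N\gg 0$, measured at the slice. It is independent of the choices, since any two souls of a fixed such metric are ambiently isotopic (each metric splits off an $\mathbb R$‑factor, as in the proof of Proposition~\ref{prop: codim one souls}, so its souls foliate by translates), any two slices are ambiently isotopic, and the torsion of an embedded h‑cobordism between parallel hypersurfaces is insensitive to ambient isotopies of the hypersurfaces because an isotopy inserts only product cobordisms (Lemma~\ref{lem: h-cob}(ii)); by construction $\iota(g_\sigma)=\sigma$.

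Finally I would show $\iota$ is locally constant. Since $M\times\mathbb R\to M$ is a trivial line bundle, the $w_1$ of the normal bundle of any codimension‑one soul vanishes, so by the argument in the proof of Proposition~\ref{prop: codim one souls} every complete metric of $\sec\ge 0$ on $M\times\mathbb R$ splits off an $\mathbb R$‑factor and has a soul through every point; hence a soul of one such metric meets some soul of any other, the distance between suitable souls stays bounded along convergent sequences, and, as explained in Remark~\ref{rmk: normal euler cl}, Lemma~\ref{lem: souls amb isot} holds for $\mathfrak R^c_{\sec\ge 0}(M\times\mathbb R)$. Iterating it along a path then shows that the souls of all metrics in one component of $\mathfrak R^c_{\sec\ge 0}(M\times\mathbb R)$ are ambiently isotopic in $M\times\mathbb R$; as an ambient isotopy is homotopic to the identity it preserves orientations and the two ends, induces the identity on $\mathrm{Wh}(G)$ (it is inner on $\pi_1$), and carries each slice to an isotopic slice, hence it preserves $\iota$. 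Therefore $\sigma\mapsto[g_\sigma]$ is injective on $T$ and $\mathfrak R^c_{\sec\ge 0}(M\times\mathbb R)$ has infinitely many components. The step I expect to be the main obstacle is making $\iota$ rigorous — checking that it is genuinely a function of the point of $\mathfrak R^c_{\sec\ge 0}(M\times\mathbb R)$ (the confinement of the relevant isotopies, the behaviour of torsion under concatenation), and in particular confirming that it recovers $\sigma$ and not some symmetrized combination such as $\sigma-\sigma^\ast$, which lies in the finite group $\mathrm{SK}_1(\mathbb{Z}G)$ and would be a finite‑valued, hence useless, invariant; this is precisely why one pins the reference end of the h‑cobordism to a slice rather than to the soul, and why the diffeomorphisms $\Phi^\pm$ are normalized on collars.
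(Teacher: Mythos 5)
Your argument is essentially the paper's: the core construction is identical (realize $t\in\mathrm{Wh}(G)$ by an h-cobordism, double it to force both ends diffeomorphic to $M$, use Wall's theorem that $\ast$ is trivial on $\mathrm{Wh}(G)/\mathrm{SK}_1(\mathbb{Z}G)$ together with $\dim M$ even to keep the torsion of infinite order, transport a product metric across the double, and invoke Lemma~\ref{lem: souls amb isot} in the codimension-one, $w_1=0$ setting). The only difference is packaging: the paper stacks copies of the doubled cobordism inside $W_\infty\cong M\times\mathbb R$ and quotes the fact that ends of a nontrivial embedded h-cobordism are not ambiently isotopic, whereas you distill that same fact into an explicit locally constant torsion invariant $\iota$ measured against a far-away slice — a clean reformulation, but not a genuinely different route, and your "carries each slice to an isotopic slice, hence preserves $\iota$" step is exactly where you are silently using the same nontrivial-h-cobordism/non-isotopy fact the paper cites.
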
 
\begin{proof} Since $\mathrm{Wh}(G)$ is infinite,
$G$ is finite, and $\dim(M)\ge 5$, there is a oriented 
h-cobordism $W_0$ with one boundary diffeomorphic
to $M$ such that $\tau(W_0,M)$ has infinite order in
$\mathrm{Wh}(G)$. We double $W_0$ along the other boundary, 
and denote the double by $W$; thus both boundary components
of $W$ are diffeomorphic to $M$. 
By Lemma~\ref{lem: h-cob} the torsion of the double $W$ is
given by $\tau (W, M)=\tau (W_0, M)+(-1)^{\dim(M)}\tau^\ast (W_0, M)$,
where we suppress inclusions.
By a result of Wall~\cite[7.4, 7.5]{Oli}, the involution
$\ast$ acts trivially on 
$\mathrm{Wh}(G)/\mathrm{SK}_1(\mathbb ZG)$, so since
$\dim(M)$ is even, 
$\tau (W, M)-2\tau (W_0, M)$ has finite order.
Since the order of $\tau (W_0, M)$ is infinite, stacking
$k$ copies of $W$ on top of each other gives a nontrivial h-cobordism
for every $k$.
Stacking countably many copies of $W$ on top of each other,
we get a manifold $W_\infty$ diffeomorphic to $M\times\mathbb R$
and countably many pairwise non-isotopic embeddings
$e_k\co M\to W_\infty$. Since $W_\infty$ is diffeomorphic to a 
tubular neighborhood of every $e_k$, we conclude that for each $k$,
the manifold 
$W_\infty$ carries a metric isometric to $M\times\mathbb R$
with soul $e_k(M)$.
So by Lemma~\ref{lem: souls amb isot} that
the $\mathrm{Diff}(N)$-orbit of the metric $M\times\mathbb R$
visits infinitely many components of
$\mathfrak R^c_{\sec\ge 0} (\mathring W)$.
\end{proof}

\begin{rmk} 
The class of groups with infinite Whitehead group is closed
under products with any group. 
Examples of finite $G$ with 
$Wh(G)$ infinite include $\mathbb Z_m$ with 
$m=5$ or $m\ge 7$, and the dihedral group $D_{2p}$ of order $2p$, 
where $p\ge 5$ is a prime.
%this last fact is in Obayashi, 1971, Osaka, J. Math. 
See~\cite{Oli} for more information. 
\end{rmk}

For odd-dimensional souls doubling produces h-cobordisms
with finite torsion, so we use a different idea based on
the following addendum to Lemma~\ref{lem: souls amb isot}. 

\begin{prop}\label{prop: def retr and space of metrics}
Let $S$, $S^\prime$ be souls for the metrics  $g, g^\prime$
that lie in the same component of $\mathfrak R^u_{\sec\ge 0} (N)$.
If $P\co N\to S^\prime$ is a deformation retraction, 
then $P\vert_{S}\co S\to S^\prime$ is homotopic to a diffeomorphism.
The same holds for $\mathfrak R^c_{\sec\ge 0} (N)$ if any 
two metrics in the space have souls that intersect.
% and it also
%pulls back stable normal bundle and ?rational?
%normal Euler class.
\end{prop}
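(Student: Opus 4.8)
The plan is to reduce the statement to the already-proved Lemma~\ref{lem: souls amb isot} together with the defining property of deformation retractions. Since $g$ and $g'$ lie in the same path-component of $\mathfrak R^u_{\sec\ge 0}(N)$, choose a path $g_t$ in that space with $g_0=g$, $g_1=g'$, and let $S_t$ be a soul of $g_t$ with $S_0=S$, $S_1=S'$. By Lemma~\ref{lem: souls amb isot}(3) (applied locally along the compact interval $[0,1]$, subdividing so that consecutive metrics are as close as the lemma requires), each $S_t$ is ambiently isotopic in $N$ to $S=S_0$; concatenating finitely many such ambient isotopies produces a single ambient isotopy $\Phi\co N\times[0,1]\to N$ with $\Phi_0=\mathrm{id}_N$ and $\Phi_1(S)=S'$. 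In particular $\Phi_1\vert_S\co S\to S'$ is a diffeomorphism.

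Next I would exploit that $\Phi$ is isotopic to the identity. Because $\Phi_0=\mathrm{id}_N$, the map $\Phi_1\co N\to N$ is homotopic to $\mathrm{id}_N$ through $\Phi_t$. Now $P\co N\to S'$ is a deformation retraction, so $P\vert_{S'}$ is homotopic to $\mathrm{id}_{S'}$, i.e. the composite $S'\hookrightarrow N\xrightarrow{P}S'$ is homotopic to the identity; more to the point, $P$ itself is homotopic, as a self-map of $N$ with image in $S'$, to the inclusion-followed-by-identity, and the homotopy can be taken relative to nothing. Restricting the homotopy $\Phi_t$ to $S$ and postcomposing with $P$ gives a homotopy from $P\vert_S = P\tinycirc\Phi_0\vert_S$ to $P\tinycirc\Phi_1\vert_S$. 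Since $\Phi_1(S)=S'$ and $P\vert_{S'}$ is homotopic to $\mathrm{id}_{S'}$, the map $P\tinycirc\Phi_1\vert_S$ is homotopic to $\Phi_1\vert_S$, which is a diffeomorphism $S\to S'$. Chaining these homotopies shows $P\vert_S$ is homotopic to the diffeomorphism $\Phi_1\vert_S$, as desired.

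For the $\mathfrak R^c_{\sec\ge 0}(N)$ statement, the argument is identical once one knows Lemma~\ref{lem: souls amb isot} applies in the $C^0$-on-compacts setting: by Remark~\ref{rmk: normal euler cl}, the lemma's conclusion (in particular part (3), ambient isotopy) holds for $\mathfrak R^c_{\sec\ge 0}(N)$ provided $\mathrm{dist}(S_t,S_{t'})$ stays uniformly bounded along the path, which is guaranteed precisely by the hypothesis that any two metrics of $\sec\ge0$ on $N$ have souls that intersect (distance-nonincreasing Sharafutdinov retractions then force bounded diameters and hence bounded mutual distance). With that in hand the same path-subdivision, ambient-isotopy concatenation, and homotopy-chasing go through verbatim.

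The main obstacle I anticipate is the passage from "each pair of nearby souls is ambiently isotopic" to "$S$ and $S'$ are ambiently isotopic by an ambient isotopy starting at $\mathrm{id}_N$": one must cover the compact path $[0,1]$ by finitely many subintervals on each of which Lemma~\ref{lem: souls amb isot} applies, check that the isotopies can be concatenated (reparametrizing so they match at endpoints), and keep track that the resulting ambient isotopy genuinely begins at the identity — which is automatic since each Wilking diffeomorphism $e_i$ can be normalized so the section-to-section isotopy it produces starts at the identity. Everything after that — the homotopy-chasing using that $P\vert_{S'}\simeq\mathrm{id}_{S'}$ and that $\Phi_1\simeq\mathrm{id}_N$ — is routine.
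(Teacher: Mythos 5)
There is a genuine gap at the very first step: you replace ``same component'' by ``same path-component'' and then choose a continuous path $g_t$ joining $g$ to $g'$. The proposition only assumes the two metrics lie in the same \emph{connected} component of $\mathfrak R^u_{\sec\ge 0}(N)$, and there is no reason for this space to be locally path-connected (convex combinations of metrics of $\sec\ge 0$ generally fail to have $\sec\ge 0$, so no obvious local path-connectedness is available). The paper is careful on exactly this point: it covers the connected component by the open sets $U_i$ supplied by Lemma~\ref{lem: souls amb isot} and then invokes Kuratowski's finite-chain property of connected sets (\cite[Section 46, Theorem 8]{Kur}) to obtain a finite sequence $g_0=g, g_1,\dots, g_n=g'$ with $g_k\in U_k$ and $U_k\cap U_{k-1}\neq\emptyset$. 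Your argument needs to be repaired by substituting this chain for the path; after that repair, the subdivision step you describe would make sense.

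Once the chain replaces the path, the rest of what you write is a correct but genuinely different route from the paper's. You use Lemma~\ref{lem: souls amb isot}(3) (ambient isotopy, which relies on Wilking's soul-neighborhood theorem) and concatenate ambient isotopies to produce $\Phi$ with $\Phi_0=\mathrm{id}_N$, $\Phi_1(S)=S'$, and then chase homotopies using $P\vert_{S'}\simeq\mathrm{id}_{S'}$. The paper instead uses only Lemma~\ref{lem: souls amb isot}(1): it forms the composition $P_n=p_n\tinycirc\cdots\tinycirc p_1$ of Sharafutdinov retractions, each homotopic to $\mathrm{id}_N$, and observes that $P_n$ maps $S_0$ diffeomorphically onto $S_n$; it then joins $P_n$ to $P$ by a homotopy through maps $N\to N$, pushes it into $S_n$ by postcomposing with $P$, and restricts to $S_0$. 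The paper's route is slightly more economical (it does not need the ambient-isotopy strengthening of the lemma), while yours makes the geometric picture more vivid; both hinge on exactly the same chain construction, which is what you must not replace by a path.
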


\begin{proof} 
The proof below holds for any $k$, so according to  
Convention~\ref{intro-convention} we omit $k$ from notations.
By Lemma~\ref{lem: souls amb isot} any metric 
$g_i\in \mathfrak R_{\sec\ge 0} (N)$ has an open neighborhood
$U_i$ such that for any $g\in U_i$
the Sharafutdinov retraction onto a soul of $g_i$ restricted
to any soul of $g$ is a diffeomorphism.

Fix an arbitrary connected component 
$C$ of $\mathfrak R_{\sec\ge 0} (N)$; thus $\{U_i\}$
is an open cover of $C$.
By a basic property of connected sets~\cite[Section 46, Theorem 8]{Kur} 
for any two $g, g^\prime\in C$ there exists a finite sequence
$g_0=g, g_1,\dots , g_n=g^\prime$ with $g_i\in C\cap U_i$ such that 
$U_i\cap U_{i-1}\neq\emptyset$ for every $0<i\le n$.

Denote souls of $g_i$ by $S_i$
and the corresponding Sharafutdinov retractions by 
$p_i\co N\to S_i$, where $S_0=S$ and $S_n=S^\prime$.
By construction $p_i\co N\to S_i$
restricted to $S_{i-1}$ is a diffeomorphism for every $0<i\le n$.
Since each $p_i\co N\to S_i\subset N$ is 
homotopic to the identity of $N$, the composition  
\[
P_n:=p_n\tinycirc\dots\tinycirc
p_1\co N\to S_n \subset N
\]  
has the same property, and furthermore,
$P_n$ maps $S_0$ diffeomorphically onto $S_n$.
Both $P$ and $P_n$ are homotopic to the identity of $N$,
so if $F$ denotes the homotopy joining $P_n$ and $P$ 
through maps $N\to N$, then $P\tinycirc F$ is a homotopy of 
$P_n$ and $P$ through maps $N\to S_n$. 
Restricting the homotopy to $S_0$, we conclude
that  $P\vert_{S_0}$ is homotopic to a diffeomorphism.
\end{proof}

In the following Proposition we can e.g. take 
$W$ to be Milnor's h-cobordism from Example~\ref{ex: mil-lens}.

\begin{prop}
\label{prop: odd h-cob}
Suppose $W$ is an oriented h-cobordism with boundaries
$M$, $M^\prime$ such that $G:=\pi_1(W)$ is finite,
$\tau(W,M)$ has infinite order in $\mathrm{Wh}(G)$,
and $\dim (M)$ is odd and $\ge 5$. Suppose $L$ is a manifold
with nonzero Euler characteristic. If $M$, $M^\prime$, $L$ admit
complete metrics with $\sec\ge 0$, then 
$\mathfrak R^u_{\sec\ge 0} (L\times\mathring W)$
is not connected.
\end{prop}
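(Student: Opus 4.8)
The plan is to build two complete $\sec\ge 0$ metrics on $N:=L\times\mathring W$ with souls $S_0,S_1$ and to show, via Proposition~\ref{prop: def retr and space of metrics}, that no deformation retraction $N\to S_1$ restricts on $S_0$ to a simple homotopy equivalence; hence $S_0$ and $S_1$ cannot be souls of metrics lying in the same component. Write $n:=\dim M$, which is odd with $n\ge 5$, so $\dim\mathring W=n+1\ge 6$. Using collars, fix a cross-section $M_0\subset\mathring W$ near the $M$-end and a cross-section $M_1\subset\mathring W$ near the $M'$-end. By the weak h-cobordism theorem (recalled at the start of Section~\ref{sec: h-cob}), $\mathring W$ is diffeomorphic to $M\times\mathbb{R}$ by a diffeomorphism carrying $M_0$ to $M\times\{0\}$, and also to $M'\times\mathbb{R}$ by one carrying $M_1$ to $M'\times\{0\}$. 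Since $L$, $M$, $M'$ carry metrics of $\sec\ge 0$, pulling back the two product metrics yields complete metrics $g_0$, $g_1$ of $\sec\ge 0$ on $N$ with souls $S_0=L\times M_0$ and $S_1=L\times M_1$. (If $L$ is noncompact, replace it throughout by its soul, which is homotopy equivalent to $L$ and so has the same Euler characteristic.)

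Suppose, for contradiction, that $g_0$ and $g_1$ lie in the same component of $\mathfrak R^u_{\sec\ge 0}(N)$. Pick a deformation retraction $q\co\mathring W\to M_1$ (it exists since $M_1$ is a cross-section, so $\mathring W\cong M_1\times\mathbb{R}$ deformation retracts onto $M_1$), and set $P:=\mathrm{id}_L\times q\co N\to S_1$, a deformation retraction. By Proposition~\ref{prop: def retr and space of metrics}, $P|_{S_0}=\mathrm{id}_L\times(q|_{M_0})$ is homotopic to a diffeomorphism $S_0\to S_1$; since diffeomorphisms are simple homotopy equivalences and simplicity is a homotopy invariant, $\tau(P|_{S_0})=0$ in the relevant Whitehead group. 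Now, under the collar identifications $M_0\cong M$, $M_1\cong M'$, the map $h:=q|_{M_0}$ is homotopic to the restriction $r'|_M$ of a deformation retraction $r'\co W\to M'$ --- because $\mathring W\hookrightarrow W$ is a homotopy equivalence compatible with both retractions --- and $r'|_M$ is homotopy inverse to $r|_{M'}$ for a deformation retraction $r\co W\to M$, since $r\circ r'\simeq r$ and $r|_M=\mathrm{id}$. By Lemma~\ref{lem: h-cob}(i),
\[
\tau(r|_{M'})=-\tau(W,M)+(-1)^{n}\,\tau^{\ast}(W,M)=-\bigl(\tau(W,M)+\tau^{\ast}(W,M)\bigr),
\]
the last equality since $n$ is odd; hence $\tau(h)=-(q|_{M_0})_{\ast}\tau(r|_{M'})$ has the same order in $\mathrm{Wh}(\pi_1 M')$ as $\tau(W,M)+\tau^{\ast}(W,M)$. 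As $G$ is finite, Wall's theorem~\cite[7.4, 7.5]{Oli} says the involution $\ast$ acts trivially on $\mathrm{Wh}(G)/\mathrm{SK}_1(\mathbb{Z}G)$, so $\tau(W,M)+\tau^{\ast}(W,M)\equiv 2\,\tau(W,M)\bmod\mathrm{SK}_1(\mathbb{Z}G)$; since $\mathrm{SK}_1(\mathbb{Z}G)$ is finite and $\tau(W,M)$ has infinite order, $\tau(h)$ has infinite order.

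Finally, by the product formula for Whitehead torsion, $\tau(\mathrm{id}_L\times h)=\chi(L)\cdot j_{\ast}\tau(h)$, where $j\co\mathrm{Wh}(\pi_1 M')\to\mathrm{Wh}(\pi_1 L\times\pi_1 M')$ is induced by the inclusion and is split injective (the projection onto $\pi_1 M'$ provides a retraction). Since $\chi(L)\ne 0$ and $\tau(h)$ has infinite order, $\tau(P|_{S_0})=\tau(\mathrm{id}_L\times h)$ has infinite order, in particular is nonzero, contradicting $\tau(P|_{S_0})=0$. Hence $g_0$ and $g_1$ lie in different components, and $\mathfrak R^u_{\sec\ge 0}(L\times\mathring W)$ is not connected. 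The one step needing care is the identification of $q|_{M_0}$, up to homotopy, with the h-cobordism equivalence $r'|_M$ whose torsion Lemma~\ref{lem: h-cob}(i) evaluates; this is a routine but slightly delicate manipulation with collars and with the homotopy equivalence $\mathring W\hookrightarrow W$, whereas the other ingredients (existence of the two metrics, the $\mathrm{SK}_1$ computation, and the product formula) are standard.
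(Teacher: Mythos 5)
Your proof is correct and follows essentially the same route as the paper: you construct two nonnegatively curved metrics with souls $L\times M_0$, $L\times M_1$ near the two ends of $\mathring W$, invoke Proposition~\ref{prop: def retr and space of metrics} to show a deformation retraction restricted to one soul would have to be a simple homotopy equivalence, compute its torsion via Lemma~\ref{lem: h-cob}(i) to be $\pm(\tau(W,M)+\tau^{\ast}(W,M))$ which is $\pm 2\tau(W,M)$ modulo the finite group $\mathrm{SK}_1(\mathbb{Z}G)$, and use the product formula for torsion together with $\chi(L)\neq 0$ to conclude nontriviality. The only cosmetic differences are that you orient the retraction toward $M'$ rather than $M$ (harmless, since homotopy-inverse torsions agree up to sign and pushforward) and that you detect nonvanishing via split injectivity of $\mathrm{Wh}(\pi_1 M')\to\mathrm{Wh}(\pi_1 L\times\pi_1 M')$ rather than via the projection to $\mathrm{Wh}(\pi_1 M)$; both are equivalent.
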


\begin{proof}
Let $f\co M^\prime\to M$ be the homotopy equivalence
between the boundary components of $W$ considered in
Lemma~\ref{lem: h-cob}; 
so 
\[
\tau(f)=-\tau(W, M)+ (-1)^{\dim(M)}\tau^\ast (W, M),
\] 
which equals to $-2\tau(W, M)$ plus an element of finite order,
as $G$ is finite and ${\dim(M)}$ is odd.
Set $S$ to be $L$ when $L$ is compact, and 
to be a soul of $L$ if $L$ is non-compact. 
The product formula for torsion~\cite[23.2b]{Coh} implies that
$\tau (f\times{\bf id}(S))$ is mapped to 
$\chi(S)\tau(g)=-2\chi(S)\tau(W, M)$ by
the projection $M\times S\to M$, and  $-2\chi(S)\tau(W, M)$
is nonzero because $\tau(W,M)$ has infinite order and
$\chi(S)=\chi(L)\neq 0$.
Thus $f\times{\bf id}(S)$
is not a simple homotopy equivalence, hence it is not homotopic
to a homeomorphism. Now thinking of $\mathring W$
as the result of attaching to $W$ an open collar along $\d W$, we apply
Proposition~\ref{prop: def retr and space of metrics}
to $L\times\mathring W$.
\end{proof}

\begin{cor} \label{cor: hausmann}
For $k\ge 3$, $r>0$, let
$M=L(4r+1,1)\times S^{2k}$ and let $L$ be a complete manifold
of $\sec\ge 0$ and $\chi(L)\neq 0$. Then
$\mathfrak R^u_{\sec\ge 0} (L\times M\times\mathbb R)$ has infinitely many 
components. 
\end{cor}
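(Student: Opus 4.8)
The plan is to realize $N=L\times L(4r+1,1)\times S^{2k}\times\mathbb R$ as $L$ times the interior of an infinite tower of h-cobordisms over $M:=L(4r+1,1)\times S^{2k}$, and to separate the resulting souls by combining Proposition~\ref{prop: def retr and space of metrics} with the product formula for Whitehead torsion; this is the ``infinitely many components'' refinement of Proposition~\ref{prop: odd h-cob}. Concretely, I would produce metrics $g_0,g_1,g_2,\dots$ of $\sec\ge 0$ on $N$, all isometric to a fixed product metric, with souls $S_0,S_1,\dots$ such that for $n\ne m$ no deformation retraction $N\to S_m$ restricts on $S_n$ to a map homotopic to a diffeomorphism; by Proposition~\ref{prop: def retr and space of metrics} this forces the $S_n$ into pairwise distinct components of $\mathfrak R^u_{\sec\ge 0}(N)$.

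First I would record the algebra. Here $M$ carries a product metric of $\sec\ge 0$, has odd dimension $\dim M=2k+3\ge 9$, and $G:=\pi_1(M)=\mathbb Z_{4r+1}$. Since $4r+1\ge 5$ we have $\mathrm{Wh}(G)\ne 0$; since $G$ is finite cyclic, $\mathrm{SK}_1(\mathbb Z G)=0$, so $\mathrm{Wh}(G)$ is torsion-free (hence every nonzero element has infinite order) and, by Wall's theorem \cite[7.4, 7.5]{Oli}, the canonical involution acts on it trivially. Consequently, by Lemma~\ref{lem: h-cob}(i), the boundary homotopy equivalence of an h-cobordism $W$ over $M$ with $\tau(W,M)=\tau$ has Whitehead torsion $-\tau+(-1)^{\dim M}\tau^{*}=-2\tau$, which is nonzero as soon as $\tau$ is.

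Next I would build a bi-infinite stack $\mathcal W=\bigcup_{i\in\mathbb Z}W_i$ with $\partial_+W_i=\partial_-W_{i+1}$, every seam $\partial_\pm W_i$ of the form $\ell'\times S^{2k}$ for a lens space $\ell'$ homotopy equivalent to $L(4r+1,1)$ (so all seams admit $\sec\ge 0$), arranged so that \emph{every} finite sub-block $W_i\cup\dots\cup W_j$ is a nontrivial h-cobordism; this is where the congruence $4r+1\equiv 1\pmod 4$ and the arithmetic of Reidemeister torsion of lens spaces (as in Milnor \cite{Mil-wh} and Example~\ref{ex: mil-lens}) are used. By the weak (open) h-cobordism theorem, exactly as in the proof of Theorem~\ref{thm: even dim inf many comp}, $\mathcal W\cong M\times\mathbb R$, and the seams give embeddings $e_n\colon M\hookrightarrow\mathcal W\cong M\times\mathbb R$ for which the h-cobordism cut out between $e_n(M)$ and $e_m(M)$ has nonzero, hence infinite-order, torsion whenever $n\ne m$. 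Setting $N=L\times\mathcal W$ and $S_n:=L\times e_n(M)$, and pulling back the product of the given metric on $L$, a metric of $\sec\ge 0$ on the relevant seam, and the standard metric on $\mathbb R$, gives a complete metric of $\sec\ge 0$ on $N$ with soul $S_n$. If $S_n$ and $S_m$ were in one component of $\mathfrak R^u_{\sec\ge 0}(N)$, then by Proposition~\ref{prop: def retr and space of metrics} the restriction to $S_n$ of a deformation retraction $N\to S_m$ would be homotopic to a diffeomorphism; but that restriction is, up to homotopy, $\mathrm{id}_L$ times the boundary homotopy equivalence of the sub-block between the $n$-th and $m$-th seams, whose torsion, by the product formula \cite[23.2]{Coh}, maps under $L\times M\to M$ to $\chi(L)$ times $(-2)$ times the nonzero torsion of that sub-block, and this is nonzero because $\chi(L)\ne 0$ and $\mathrm{Wh}(G)$ is torsion-free. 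Since diffeomorphisms are simple homotopy equivalences, this is a contradiction, so $\mathfrak R^u_{\sec\ge 0}(N)$ has infinitely many components.

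The main obstacle is the construction in the third paragraph: arranging that no finite sub-block of the tower is an $s$-cobordism. The naive attempt --- stacking copies of a single h-cobordism realizing a homotopy self-equivalence of $M$, or threading once around a cycle of homotopy-equivalent lens spaces --- fails, because such a self-equivalence has finite order up to homotopy and its iterated Reidemeister torsion eventually telescopes to zero; circumventing this is precisely the arithmetic bookkeeping that forces the hypothesis into the shape $4r+1$, and the factor $L$ with $\chi(L)\ne 0$ then enters through the product formula, exactly as in Proposition~\ref{prop: odd h-cob}, to turn the surviving torsion classes into genuine obstructions.
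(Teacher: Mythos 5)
Your overall plan---realize $N$ as $L$ times the interior of a bi-infinite stack of h-cobordisms over $M$, then combine Proposition~\ref{prop: def retr and space of metrics} with the product formula for Whitehead torsion---is exactly the strategy of the paper's proof, and your algebraic preliminaries (torsion-freeness of $\mathrm{Wh}(\mathbb Z_{4r+1})$ via $\mathrm{SK}_1=0$ for cyclic groups, triviality of the involution, the $-2\tau$ formula from Lemma~\ref{lem: h-cob}(i) in odd dimensions, and the role of $\chi(L)$ via \cite[23.2]{Coh}) are all correct.

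The genuine gap is the one you flag yourself: you do not actually produce a bi-infinite tower $\mathcal W\cong M\times\mathbb R$ in which every finite sub-block is a nontrivial h-cobordism with both ends diffeomorphic to $M$ and admitting $\sec\ge 0$. Your attempt to thread through a varying family of lens-space factors and control Reidemeister torsions is left as a sketch, and you explicitly concede that the naive approaches telescope. The paper supplies precisely the missing input by citing Hausmann~\cite{Hau-opn-books}: for $M=L(4r+1,1)\times S^{2k}$ there exists a \emph{nontrivial inertial} h-cobordism $W$, i.e.\ $\tau(W,M)\ne 0$ in $\mathrm{Wh}(\mathbb Z_{4r+1})$ and the far end of $W$ is diffeomorphic to $M$. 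Once $W$ is in hand there is nothing to construct: stack copies of $W$ to get $\mathcal W\cong M\times\mathbb R$ with embeddings $e_0,e_1,\dots$ of $M$, the h-cobordism between $e_n(M)$ and $e_m(M)$ has torsion $(m-n)\tau(W,M)$, and Lemma~\ref{lem: h-cob} gives boundary-equivalence torsion $-2(m-n)\tau(W,M)\ne 0$; crossing with $L$ multiplies by $\chi(L)\ne 0$, and Proposition~\ref{prop: def retr and space of metrics} finishes the argument. Your worry that iterating a finite-order homotopy self-equivalence makes the torsion periodic is exactly the difficulty that Hausmann's theorem dissolves---one picks the torsion $\tau(W,M)$ first and gets the inertial property for free, rather than trying to reverse-engineer the stack from a self-equivalence. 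Without this citation your argument does not close.
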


\begin{proof} 
Hausmann~\cite{Hau-opn-books} 
showed that there is a nontrivial h-cobordism $W$ with 
both boundaries diffeomorphic to $M=L(4r+1,1)\times S^{2k}$. 
Stacking infinitely many copies
of $W$ on top of each other we get a manifold diffeomorphic
to $M\times\mathbb R$, and infinitely many embeddings 
$M\to M\times\mathbb R$ such that the h-cobordism between
any two distinct embedded copies of $M$ is obtained by stacking
$k$ copies of $W$ on top of each other for some positive integer 
$k$. By Lemma~\ref{lem: h-cob} the homotopy equivalence
between its boundaries has torsion $-2k\tau(W, M)$
and hence it is not homotopic to a diffeomorphism, so that
Proposition~\ref{prop: def retr and space of metrics} applies
yielding the special case when $L$ is a point. The general
case follows as in the proof of Proposition~\ref{prop: odd h-cob}. 
\end{proof}

\section{Acknowledgments}
Belegradek is grateful to A.~Dessai for idea of 
Proposition~\ref{prop: via ricci flow}, V.~Kapovitch for helpful 
conversations about~\cite{KPT}, B.~Wilking and L. Polterovich for useful comments about uniform topology on the moduli space. 
We also appreciate referee's comments on exposition. 
Belegradek was partially supported by the NSF grant \# DMS-0804038,
Kwasik was partially supported by BOR award LEQSF(2008-2011)-RD-A-24.

\small
\bibliographystyle{amsalpha}
\bibliography{mod1-revision}

\providecommand{\bysame}{\leavevmode\hbox to3em{\hrulefill}\thinspace}
\providecommand{\MR}{\relax\ifhmode\unskip\space\fi MR }
% \MRhref is called by the amsart/book/proc definition of \MR.
\providecommand{\MRhref}[2]{%
  \href{http://www.ams.org/mathscinet-getitem?mr=#1}{#2}
}
\providecommand{\href}[2]{#2}
\begin{thebibliography}{KPT05}

\bibitem[Bel03]{Bel}
I.~Belegradek, \emph{Vector bundles with infinitely many souls}, Proc. Amer.
  Math. Soc. \textbf{131} (2003), no.~7, 2217--2221 (electronic).

\bibitem[BK01]{BK-Mathann}
I.~Belegradek and V.~Kapovitch, \emph{Topological obstructions to nonnegative
  curvature}, Math. Ann. \textbf{320} (2001), no.~1, 167--190.

\bibitem[BK06]{BK-acta}
\bysame, \emph{Classification of negatively pinched manifolds with amenable
  fundamental groups}, Acta Math. \textbf{196} (2006), no.~2, 229--260.

\bibitem[BKS]{BKS-mod2}
I.~Belegradek, S.~Kwasik, and R.~Schultz, \emph{Codimension two souls and
  cancellation phenomena}, arXiv:0912.4874v1 [math.DG].

\bibitem[BW07]{BohWil}
C.~B{\"o}hm and B.~Wilking, \emph{Nonnegatively curved manifolds with finite
  fundamental groups admit metrics with positive {R}icci curvature}, Geom.
  Funct. Anal. \textbf{17} (2007), no.~3, 665--681.

\bibitem[CG72]{CheGro}
J.~Cheeger and D.~Gromoll, \emph{On the structure of complete manifolds of
  nonnegative curvature}, Ann. of Math. (2) \textbf{96} (1972), 413--443.

\bibitem[Coh73]{Coh}
M.~M. Cohen, \emph{A course in simple-homotopy theory}, Springer-Verlag, New
  York, 1973, Graduate Texts in Mathematics, Vol. 10.

\bibitem[Cro10]{Cro}
D.~Crowley, \emph{The smooth structure set of {$S^p\times S^q$}}, Geom.
  Dedicata \textbf{148} (2010), 15--33.

\bibitem[CW03]{ChaWei}
S.~Chang and S.~Weinberger, \emph{On invariants of {H}irzebruch and
  {C}heeger-{G}romov}, Geom. Topol. \textbf{7} (2003), 311--319 (electronic).

\bibitem[Dol63]{Dol}
A.~Dold, \emph{Partitions of unity in the theory of fibrations}, Ann. of Math.
  (2) \textbf{78} (1963), 223--255.

\bibitem[Ebi70]{Ebi}
D.~G. Ebin, \emph{The manifold of {R}iemannian metrics}, Global {A}nalysis
  ({P}roc. {S}ympos. {P}ure {M}ath., {V}ol. {XV}, {B}erkeley, {C}alif., 1968),
  Amer. Math. Soc., Providence, R.I., 1970, pp.~11--40.

\bibitem[FM78]{FegMil}
H.~D. Fegan and R.~S. Millman, \emph{Quadrants of {R}iemannian metrics},
  Michigan Math. J. \textbf{25} (1978), no.~1, 3--7.

\bibitem[FO10a]{FarOnt-mod}
F.~T. Farrell and P.~Ontaneda, \emph{The moduli space of negatively curved
  metrics of a hyperbolic manifold}, J. Topol. \textbf{3} (2010), no.~3,
  561--577.

\bibitem[FO10b]{FarOnt-Teich}
T.~Farrell and P.~Ontaneda, \emph{Teichm\"uller spaces and negatively curved
  fiber bundles}, Geom. Funct. Anal. \textbf{20} (2010), no.~6, 1397--1430.

\bibitem[Gui98]{Gui}
L.~Guijarro, \emph{Improving the metric in an open manifold with nonnegative
  curvature}, Proc. Amer. Math. Soc. \textbf{126} (1998), no.~5, 1541--1545.

\bibitem[GZ00]{GroZil}
K.~Grove and W.~Ziller, \emph{Curvature and symmetry of {M}ilnor spheres}, Ann.
  of Math. (2) \textbf{152} (2000), no.~1, 331--367.

\bibitem[Hae61]{Hae61}
A.~Haefliger, \emph{Plongements diff\'erentiables de vari\'et\'es dans vari\'et\'es},
Comment. Math. Helv., \textbf{36} (1961), 47--82.

\bibitem[Hae62]{Hae62}
A.~Haefliger, \emph{Knotted {$(4k-1)$}-spheres in {$6k$}-space}, Ann. of Math.
  (2) \textbf{75} (1962), 452--466.

\bibitem[Hae66]{Hae66}
\bysame, \emph{Differential embeddings of {$S\sp{n}$} in {$S\sp{n+q}$} for
  {$q>2$}}, Ann. of Math. (2) \textbf{83} (1966), 402--436.

\bibitem[Han83]{Han}
V.~L. Hansen, \emph{The homotopy groups of a space of maps between oriented
  closed surfaces}, Bull. London Math. Soc. \textbf{15} (1983), no.~4,
  360--364.

\bibitem[Hau80]{Hau-opn-books}
J.-C. Hausmann, \emph{Open books and {$h$}-cobordisms}, Comment. Math. Helv.
  \textbf{55} (1980), no.~3, 330--346.

\bibitem[Hir62]{Hir}
M.~W. Hirsch, \emph{Smooth regular neighborhoods}, Ann. of Math. (2)
  \textbf{76} (1962), 524--530.

\bibitem[HT00]{HamTay}
I.~Hambleton and L.~R. Taylor, \emph{A guide to the calculation of the surgery
  obstruction groups for finite groups}, Surveys on surgery theory, {V}ol. 1,
  Ann. of Math. Stud., vol. 145, Princeton Univ. Press, Princeton, NJ, 2000,
  pp.~225--274.

\bibitem[Hus94]{Hus-fibr-bundl}
D.~Husemoller, \emph{Fibre bundles}, third ed., Graduate Texts in Mathematics,
  vol.~20, Springer-Verlag, New York, 1994.

\bibitem[KN69]{KN}
S.~Kobayashi and K.~Nomizu, \emph{Foundations of differential geometry. {V}ol.
  {II}}, Interscience Tracts in Pure and Applied Mathematics, No. 15 Vol. II,
  Interscience Publishers John Wiley \& Sons, Inc., New York-London-Sydney,
  1969.

\bibitem[KPT05]{KPT}
V.~Kapovitch, A.~Petrunin, and W.~Tuschmann, \emph{Non-negative pinching,
  moduli spaces and bundles with infinitely many souls}, J. Differential Geom.
  \textbf{71} (2005), no.~3, 365--383.

\bibitem[KS93]{KreSto}
M.~Kreck and S.~Stolz, \emph{Nonconnected moduli spaces of positive sectional
  curvature metrics}, J. Amer. Math. Soc. \textbf{6} (1993), no.~4, 825--850.

\bibitem[KS99]{KS-eq-h-con}
S.~Kwasik and R.~Schultz, \emph{On {$h$}-cobordisms of spherical space forms},
  Proc. Amer. Math. Soc. \textbf{127} (1999), no.~5, 1525--1532.

\bibitem[KS04]{KwaSch-toral}
\bysame, \emph{Toral and exponential stabilization for homotopy spherical
  spaceforms}, Math. Proc. Cambridge Philos. Soc. \textbf{137} (2004), no.~3,
  571--593.

\bibitem[Kur68]{Kur}
K.~Kuratowski, \emph{Topology. {V}ol. {II}}, New edition, revised and
  augmented. Translated from the French by A. Kirkor, Academic Press, New York,
  1968.

\bibitem[LdM71]{LdM}
S.~L{\'o}pez~de Medrano, \emph{Involutions on manifolds}, Springer-Verlag, New
  York, 1971, Ergebnisse der Mathematik und ihrer Grenzgebiete, Band 59.

\bibitem[Lev65]{Lev}
J.~Levine, \emph{A classification of differentiable knots}, Ann. of Math. (2)
  \textbf{82} (1965), 15--50.

\bibitem[LS00]{LucSta}
W.~L{\"u}ck and R.~Stamm, \emph{Computations of {$K$}- and {$L$}-theory of
  cocompact planar groups}, $K$-Theory \textbf{21} (2000), no.~3, 249--292.

\bibitem[MM79]{MadMil}
I.~Madsen and R.~J. Milgram, \emph{The classifying spaces for surgery and
  cobordism of manifolds}, Annals of Mathematics Studies, vol.~92, Princeton
  University Press, Princeton, N.J., 1979.

\bibitem[Mil61]{Mil-haup}
J.~Milnor, \emph{Two complexes which are homeomorphic but combinatorially
  distinct}, Ann. of Math. (2) \textbf{74} (1961), 575--590.

\bibitem[Mil66]{Mil-wh}
\bysame, \emph{Whitehead torsion}, Bull. Amer. Math. Soc. \textbf{72} (1966),
  358--426.

\bibitem[MS74]{MS-char-cl}
J.~W. Milnor and J.~D. Stasheff, \emph{Characteristic classes}, Princeton
  University Press, Princeton, N. J., 1974, Annals of Mathematics Studies, No.
  76.

\bibitem[NW00]{NabWei-ihes}
A.~Nabutovsky and S.~Weinberger, \emph{Variational problems for {R}iemannian
  functionals and arithmetic groups}, Inst. Hautes \'Etudes Sci. Publ. Math.
  (2000), no.~92, 5--62 (2001).

\bibitem[NW03]{NabWei-fractal}
\bysame, \emph{The fractal nature of {R}iem/{D}iff. {I}}, Geom. Dedicata
  \textbf{101} (2003), 1--54.

\bibitem[Oli88]{Oli}
R.~Oliver, \emph{Whitehead groups of finite groups}, London Mathematical
  Society Lecture Note Series, vol. 132, Cambridge University Press, Cambridge,
  1988.

\bibitem[Pet97]{Pet-conv}
P.~Petersen, \emph{Convergence theorems in {R}iemannian geometry}, Comparison
  geometry ({B}erkeley, {CA}, 1993--94), Math. Sci. Res. Inst. Publ., vol.~30,
  Cambridge Univ. Press, Cambridge, 1997, pp.~167--202.

\bibitem[Ran02]{Ran-book}
A.~Ranicki, \emph{Algebraic and geometric surgery}, Oxford Mathematical
  Monographs, The Clarendon Press Oxford University Press, Oxford, 2002, Oxford
  Science Publications.

\bibitem[Ros07]{Ros}
J.~Rosenberg, \emph{Manifolds of positive scalar curvature: a progress report},
  Surveys in differential geometry. {V}ol. {XI}, Surv. Differ. Geom., vol.~11,
  Int. Press, Somerville, MA, 2007, pp.~259--294.

\bibitem[Sar79]{Sha-souls}
V.~A. Sarafutdinov, \emph{Convex sets in a manifold of nonnegative curvature},
  Mat. Zametki \textbf{26} (1979), no.~1, 129--136, 159.

\bibitem[Sie69]{Sie-collar}
L.~C. Siebenmann, \emph{On detecting open collars}, Trans. Amer. Math. Soc.
  \textbf{142} (1969), 201--227.

\bibitem[Sie70]{Sie}
\bysame, \emph{Infinite simple homotopy types}, Nederl. Akad. Wetensch. Proc.
  Ser. A 73 = Indag. Math. \textbf{32} (1970), 479--495.

\bibitem[Spa81]{Spa-alg-top}
E.~H. Spanier, \emph{Algebraic topology}, Springer-Verlag, New York, 1981,
  Corrected reprint.

\bibitem[SS01]{SorShe}
Z.~Shen and C.~Sormani, \emph{The codimension one homology of a complete
  manifold with nonnegative {R}icci curvature}, Amer. J. Math. \textbf{123}
  (2001), no.~3, 515--524.

\bibitem[Swa60]{Swa}
R.~G. Swan, \emph{Induced representations and projective modules}, Ann. of
  Math. (2) \textbf{71} (1960), 552--578.

\bibitem[Tod62]{Tod}
H.~Toda, \emph{Composition methods in homotopy groups of spheres}, Annals of
  Mathematics Studies, No. 49, Princeton University Press, Princeton, N.J.,
  1962.

\bibitem[TW]{TusWie}
W.~Tuschmann and M.~Wiemeler, \emph{On the topology of moduli spaces of
non-negatively curved Riemannian metrics}, arXiv:1712.07052.

\bibitem[Wal99]{Wal-book}
C.~T.~C. Wall, \emph{Surgery on compact manifolds}, second ed., Mathematical
  Surveys and Monographs, vol.~69, American Mathematical Society, Providence,
  RI, 1999, Edited and with a foreword by A. A. Ranicki.

\bibitem[Wil07]{Wil}
B.~Wilking, \emph{A duality theorem for {R}iemannian foliations in nonnegative
  sectional curvature}, Geom. Funct. Anal. \textbf{17} (2007), no.~4,
  1297--1320.

\bibitem[WZ90]{WanZil}
M.~Y. Wang and W.~Ziller, \emph{Einstein metrics on principal torus bundles},
  J. Differential Geom. \textbf{31} (1990), no.~1, 215--248.

\bibitem[Yam91]{Yam}
T.~Yamaguchi, \emph{Collapsing and pinching under a lower curvature bound},
  Ann. of Math. (2) \textbf{133} (1991), no.~2, 317--357.

\bibitem[Yim90]{Yim-souls}
J.-W. Yim, \emph{Space of souls in a complete open manifold of nonnegative
  curvature}, J. Differential Geom. \textbf{32} (1990), no.~2, 429--455.

\end{thebibliography}

\end{document}